\newtheorem{e}{Example}
\newcommand{\assign}{:=}
\newcommand{\backassign}{=:}
\newcommand{\nobracket}{}
\newcommand{\tmcolor}[2]{{\color{#1}{#2}}}
\newcommand{\tmmathbf}[1]{\ensuremath{\boldsymbol{#1}}}
\newcommand{\tmop}[1]{\ensuremath{\operatorname{#1}}}
\newcommand{\tmtextit}[1]{\text{{\itshape{#1}}}}
\newtheorem{thm}{Theorem}[section]
\newtheorem{theorem}[thm]{Theorem}
\newtheorem{lem}[thm]{Lemma}
\newtheorem{remark}[thm]{Remark}
\newtheorem{notation}[thm]{Notation}
\newtheorem{prop}[thm]{Proposition}
\newtheorem{proposition}[thm]{Proposition}
\newtheorem{definition}[thm]{Definition}
\newenvironment{assumption}[1]
  {%
   \assumptioninner}
  {\endassumptioninner}
\newcommand{\R}{\mathbb{R}}
\newcommand{\E}{\mathbb{E}}
\newcommand{\Y}{\mathbb{Y}}
\newcommand{\be}{\begin{equation}}
\newcommand{\ee}{\end{equation}}
\newcommand{\bea}{\begin{eqnarray}}
\newcommand{\eea}{\end{eqnarray}}
\newcommand{\vars}{\varsigma}
\newcommand{\BY}{\mathbf{Y}}
\newcommand{\MF}{\mathcal{F}}
\newcommand{\MB}{\mathcal{B}}
\newcommand{\ML}{\mathcal{L}}
\newcommand{\MG}{\mathcal{G}}
\newcommand{\SC}{\mathscr{C}}
\newcommand{\SD}{\mathscr{D}}
\newcommand{\PP}{\mathbb{P}}
\newcommand{\hmu}{{\hat{\mu}}}
\newcommand{\ome}{\omega}
\DeclareSymbolFont{fouriersymbols}{FMS}{futm}{m}{n}
\DeclareSymbolFont{fourierlargesymbols}{FMX}{futm}{m}{n}
\DeclareMathDelimiter{\vvert}{\mathord}{fouriersymbols}{152}{fourierlargesymbols}{147}
\DeclarePairedDelimiter{\nn}{\vvert}{\vvert}
\begin{document}


\title{Rough stochastic filtering}

\date{}

\author{Fabio Bugini}
\affil{TU Berlin}

\author{Peter K.~Friz}
\affil{TU Berlin and WIAS Berlin}

\author{Khoa L\^e}
\affil{School of Mathematics, University of Leeds}

\author{Huilin Zhang}
\affil{Shandong U. and Humboldt U.}

\maketitle

\begin{abstract}
This article is concerned with the well-posedness of the ``filtering equations'', due to Zakai and Kushner--Stratonovich, arising in nonlinear stochastic filtering. 
In general situations, notably in correlated diffusion models and when signal coefficients depend on the observation process, the well-posedness is a difficult problem, mainly due to conflicting martingale structures of the involved forward and backward equations.
Crisan--Pardoux (2024) address this classical problem with BSPDE techniques, Du et al. (2013), a Sobolev-based approach that however requires increasingly strong regularity assumptions in high dimensions. In this work, we take a new mixed rough stochastic perspective which allows us to derive well-posed rough counterparts of the filtering equations. 
Importantly, the rough filtering equations are seen, upon randomization, to coincide with the classical filtering equations. Our framework yields well-posedness (existence, uniqueness, stability) under dimension-independent regularity assumptions, providing a robust and conceptually unified solution to a longstanding problem in stochastic filtering theory. To illustrate the flexibility of the method, we also treat rough versions of the classical Kalman--Bucy filter, with characteristics described by a new class of RDEs of rough Riccati type. 
\end{abstract}

\textbf{Keywords:} rough paths, stochastic filtering, robust filtering, well-posedness of filtering equations, rough partial differential equations, rough Riccati equations\\

\textbf{MSC2020: 60L20, 60G35, 60L50}

\tableofcontents

\section{Introduction}


This article is concerned with the well-posedness of the filtering equations arising in nonlinear stochastic filtering theory. Providing a just historic account is a research task by itself (e.g. \cite{crisan2014stochastic}; with due credit to R.\,E.~Kalman, G.~Kallianpur, H.~Kushner, R.\,L.~Stratonovich, N.~Wiener and many others). Clarifying  the probabilistic, analytic, and the SPDE-based structure of filtering equations has been a major community effort. 
With regard to the goals of this article, we may point specifically to A.~Bensoussan, M.~Clark, D. Crisan, H.~Kunita, M. Davis, N.~Krylov, B.~Rozovskii, R.~Karandikar, and  E.~Pardoux. %
Selected references of expository nature include  \cite{bensoussan1992stochastic,pardoux2006filtrage,BC09,RozovskiiLototsky2018, Yau24}.

The filtering equations of interest, the so-called Zakai and Kushner--Stratonovich equations, are linear resp. non-linear stochastic partial differential equations of parabolic type, of form\footnote{Notation consistent with \cref{sec:review}.}
\begin{align*} 
\mu_t (\varphi) & = \mu_0 (\varphi) + \int_0^t \mu_r  (A_r \varphi) dr \nonumber   + \int_0^t \mu_r  (\nabla \varphi f_r  + \varphi h_r )  
  d Y_r, \label{equ:ZI}
   \\
   \varsigma_t (\varphi)  & =
  \varsigma_0 (\varphi) + \int_0^t \vars_r  (A_r \varphi) dr \notag  + \int_0^t \left( \varsigma _r  (\nabla \varphi f_r  +
    \varphi h_r ) - \varsigma_r(\varphi)\varsigma_r(h_r) \right) \left( \mathrm{d} Y_r - \frac{d\langle Y, Y \rangle_r}{dr} \varsigma_r (h_r^\top) \, dr \right) ,  
\end{align*} 
with $\R^{d_Y}$-dimensional observation noise $Y$, and diffusion generator $A$ of a $\R^{d_X}$-dimensional signal  process $X$. 
Well-posedness (existence, uniqueness, stability) of these filtering equations is a classical problem. Existence is naturally obtained via $\mathcal{F}^Y$-conditional Feynman--Kac representations (involving the likelihood process $Z$) and the Kallianpur--Striebel formula,
$$
 \mu_t (\varphi) = \mathbb{E} [ Z_t \varphi (X_t)  \mid
     \mathcal{F}^Y_t], \ \  
 \varsigma_t (\varphi) = \mathbb{E}^o [\varphi (X_t) \mid \mathcal{F}^Y_t] =  
     \frac{\mu_t (\varphi)}{\mu_t (1)}, 
$$
where $\mathbb{E}^o$ denotes expectation under an ``original'' measure, connected to $\mathbb{E}$ via $Z$, by the Girsanov theorem. 
Stability is then essentially a question about robust representations. In the latter, the role of rough paths is well-appreciated  \cite{Lyo98, CDFO13,AC20, CNNR23, allan2025roughsdesrobustfiltering}.  The uniqueness  problem for the Kushner--Stratonovich equation can be reduced to the uniqueness of the Zakai equation. 
A general approach to the uniqueness of linear equations is the duality method. {\em Without observation} ($h=0$) the filtering equations reduce to the  Kolmogorov forward equation; the duality method then amounts to pairing a measure-valued solution to the forward equation with a (sufficiently regular) solution of the corresponding backward equation. An application of the product rule shows that this pairing remains constant in time, which immediately yields uniqueness for (measure-valued) solutions of the forward equation. 

{\em With observation}, however, i.e. in the case when the Zakai equation is a genuine {\em stochastic} partial differential equation, the classical duality argument breaks down. The reason is that any (backward) martingale structure of a backward stochastic equation is in general incompatible with that of the forward equation, making it impossible to apply (It\^o) product rules. This has, amongst other things, inspired the development of a forward-backward stochastic calculus \cite{PP87}, although the standard proof of uniqueness is, to the best of our knowledge,
found in \cite{bensoussan1992stochastic}. Here, the measure valued solution of the Zakai equation is paired with a regular solution 
of a deterministic backward PDE; the duality property is shown through the use of a collection of exponential martingales first introduced in the filtering context in \cite{KR81}.
As pointed out in \cite{CP24}, this method fails (as would \cite{PP87} if suitably extended to infinite dimensions\footnote{Cf. Section 7.4. in \cite{PP87}, the problem being the absence of a (forward/backward)semimartingality generality for the integrands. We thank Etienne Pardoux for a related email exchange.})
when the signal coefficients are also allowed to depend on the observation process, a feature that is both natural and important from a filtering perspective.

Surprisingly enough, only recently Crisan--Pardoux \cite{CP24} could solve this problem, also in a remarkable generality of degenerate observation noise, by constructing backward equations adapted to the forward filtration, using ideas from backward stochastic differential equations in Sobolev spaces \cite{DTZ13}. (We summarize their results in  \cref{sec:review}.) As a consequence of Sobolev embeddings, the construction of sufficiently regular backward solutions requires increasingly strong regularity assumptions on the coefficients as the dimension of the signal increases. The purpose of a uniqueness results (here for Zakai or Kushner--Stratonovich) is the guaranty that any solution, typically obtained via numerical methods, indeed provides (or at least approximates) the object of interest: the filter $\mathbb{E}^o [\varphi (X_t) \mid \mathcal{F}^Y_t]$. 
Kalman--Bucy theory describes situations when the filter is conditionally Gaussian, the Kushner--Stratonovich is then effectively replaced by a finite-dimensional Riccati SDEs for mean and variance of the conditional law; cf e.g. \cite{HP88}, \cite[Sec 6.3]{pardoux2006filtrage} or \cite[Sec 8.1]{BC09}.

\medskip
{\em Contributions of this article.} In a precise sense, we show in this work the well-posedness of the filtering equations under dimension-independent regularity assumptions. (This opens up the possibility to treat infinite-dimensional signal processes, although such is left to future investigations.) A key insight of this work is that such well-posedness results are possible on the level of the {\em rough filtering equations}, valid for generic observation rough paths $\BY$, of the form 
\begin{align*} 
\mu^{\BY}_t (\varphi) &  =  {\mu _0}^{\BY} (\varphi) +  \int_0^t \mu^{\BY}_r  (A^{\BY}_r \varphi) dr + \int_0^t  \mu^{\BY}_r (\Gamma^{\BY}_r \varphi)  \, d \BY_r \\
   \varsigma_t^{\BY} (\varphi) & = \varsigma_0^{\BY} (\varphi) + \int_0^t \varsigma_r^\BY  (A^{\BY}_r \varphi) \, d r + \int_0^t (\varsigma_r^{\BY}  (\Gamma^{\BY}_r \varphi) - \varsigma_r^{\BY} (\varphi)  \varsigma_r^{\BY} (h_r^{\BY}) (d  \BY_r - \dot{[\BY]}_r  \varsigma_r (h^\BY)^\top) d r). 
\end{align*} 
Underlying our analysis, given in Section \ref{sec:RoughFiltering}, are rough counterparts of conditional Feynman-Kac and Kallianpur--Striebel formulae, of the form 
$$
 \mu^\BY_t (\varphi) = \mathbb{E} [Z^\BY_t \varphi (X^\BY_t)], \ \  
 \varsigma^\BY_t (\varphi) =  
     \frac{\mu^\BY_t (\varphi)}{\mu^\BY_t (1)}. 
$$
In contrast to the stochastic case, there is no more change of measure, for $\BY$ is deterministic. Here $X^\BY, Z^\BY$ denote the rough version of signal and likelihood process, formally obtained by replacing observation noise $Y$ by a generic rough path $\BY$. We rigorously treat these (in Section \ref{sec:RSDXZ}) via {\em rough stochastic differential equations} in the recent sense of \cite{FHL21, BFS24}, with brief summary in 
\cref{sec:elem}.
In particular, we give in Sections \ref{sec:roughZakai_existence} and \ref{sec:RKS} rigorous definitions of the {\em rough Zakai} and {\em Kushner--Stratonovich equation}, of the form displayed above. (Section \ref{subsec:degen-case} for the degenerate observation noise case). These are, respectively, linear and non-linear {\em rough}  partial differential equations of parabolic type.
By ``removing probability'' (to the extent possible) from the filtering equations it becomes possible to perform a direct pairing of rough forward and backward equations, at the price of moving to a space-time setting of strongly controlled rough paths. 
Contrary to many previous works on rough partial differential equations, including \cite{GT10, HH18, HN19, GH19}, we do not restrict to geometric rough paths, nor do we make ellipticity assumptions (related to mild formulation, energy methods etc.), 
instead we 
work in the for filtering problems natural generality of spaces of measures. 

In Section \ref{section:roughKB} we consider (affine-)linear rough stochastic signal dynamics, employing rough stochastic differential equations with linear coefficients \cite{BCN24}, and develop a  rough counterpart of Kalman--Bucy theory. The rough counterpart of the filter, $\varsigma^\BY$ is seen to be Gaussian, with arguments rather different from the stochastic case, including a method of controlled approximations, with exponential integrability of second Wiener It\^o chaos also at play (\cref{app:EWC}). The evolution of mean and covariance is seen to follow a {\em Riccati rough differential equation}, which appears here (to our best knowledge) for the first time. We give a direct well-posedness (non-explosion) argument, outside the scope of existing results \cite{RS17,LY25}. In view of  future uses (e.g.\ in linear-quadratic control problems or mean field games with common noise), the relevant material in Section \ref{sec:RRE} is kept maximally self-contained. 



Last not least, in 
Section \ref{section:bridgingroughandstochasticfiltering}, also end of Sections \ref{subsec:degen-case} and \ref{sec:RKBF}, we connect the ``rough'' objects or previous sections with those from stochastic filtering theory (cf. \cref{sec:review}). In particular, rough signal dynamics are seen to become the correct stochastic signal dynamics under randomization of the rough path, i.e. when $\BY$ is taken as typical realization of the It\^o lift of the observation process, 
$$
   \BY^{\text{It\^o}}(\omega) = (Y,\Y)(\omega), \ \ \Y_{s,t}(\omega) = \Big( \int_s^t \delta Y_{s,r} \otimes dY_r \Big) (\omega).
$$
By consequence, the randomization of $\varsigma^\BY_t (\varphi) = \mathbb{E} [Z^\BY_t \varphi (X^\BY_t)]/ \mathbb{E} [Z^\BY_t ]$ is seen to be a version of the overall  object of interest, the (``stochastic'') filter $\mathbb{E}^o [\varphi (X_t) \mid \mathcal{F}^Y_t]$. In Section \ref{sec:SvsRFW} we obtain analagous statements directly on the filtering equations. The uniquess result for the  rough Kushner--Stratonovich equation then guarantees that any solution $\varsigma^\BY_t$, possibly obtained by numerical rough PDE methods, indeed provides the correct 
(or at least approximates the desired) rough filter, which agrees - upon randomization - with the stochastic filter, object of interest. 
In this sense the uniqueness results obtained in this work for the rough filtering equations are just as useful as those for the classical stochastic equations, with the additional benefit of dealing with deterministic (rough) equations which inherit many robustness properties of rough analysis.

\medskip



{\bf Acknowledgement.}
FB is supported by DFG 
- Project-ID 410208580 - IRTG2544 (``Stochastic Analysis in Interaction''). 
PKF and HZ acknowledge support from DFG CRC/TRR 388 ``Rough
Analysis, Stochastic Dynamics and Related Fields''. 
Part of this work was carried out during a visit of the second author to Shandong University. 
KL acknowledges support from EPSRC
[grant number EP/Y016955/1]. HZ is partially supported by the Fundamental Research Funds for the Central Universities, NSF of China and Shandong (Grant Numbers 12031009, ZR2023MA026), Young Research Project of Tai-Shan (No.tsqn202306054).

\section{Notation}  \label{sec:notation}
Let $d,d_1,d_2 \in \mathbb{N}_{\ge 1}$. 
We shall denote by $\mathcal{M}_F(\mathbb{R}^d)$ the set of finite measures on $\mathbb{R}^d$ and by $\mathcal{P}(\R^d)$ the set of probability measures on $\R^d$. For any $\mu \in \mathcal{M}_F(\R^d)$ and for any $\psi:\R^{d} \to \R$ Borel measurable and bounded, we define \begin{equation*}
    \mu(\psi) := \int_{\R^d} \psi(x) \, \mu(dx).
\end{equation*}
We equip $\mathcal{M}_F(\mathbb{R}^d)$ with the topology of weak convergence (i.e.\ $\mu_n \to \mu$ if $\langle \mu_n, \varphi \rangle \to \langle \mu, \varphi \rangle$, for all $\varphi \in \mathcal{C}^0_b(\R^d)$, the set of continuous bounded functions on $\mathbb{R}^d$). $\mathcal{B}(\R^d)$ denotes the Borel $\sigma$-algebra on $\R^d$ and $\mathcal{B}_b (\mathbb{R}^d)$ is the space of bounded measurable functions from $\R^d$ to $\R$.

In the (stochastic) filtering context, $\mathbb{P}^o$ stands for the original probability measure, under which the filtering dynamics is specified, $\mathbb{P}$ the (Girsanov) changed measure. 
Write $\varsigma_t$ for the $\mathcal{P}(\R^d)$-valued filter process, $\mu_t$ for the $\mathcal{M}_F(\R^d)$-valued unnormalized conditional distribution process. 
Level-2 rough paths are generically denoted by $\mathbf{Y} = (Y, \mathbb{Y})$, elements of a rough path space $(\mathscr{C}, \rho)$, cf.  Section \ref{sec:RPs}.

Given $\varphi: \R^{d_1} \to \R^{d_2}$, we denote by $|\varphi|_\infty := \sup_{x \in \R^{d_1}} |\varphi(x)|$ and by $D^j \varphi$ its $j$-th (Fréchet) derivative, if it exists. Partial Fréchet derivatives with respect to the variable $x_i$ are denoted by $D^j_{x_i} \varphi$. 
Let $N \in \mathbb{N}_{\ge 0}$
and let $\gamma \in (0,1]$.
We say that $\varphi \in \mathcal{C}^{N+\gamma}(\R^{d_1};\R^{d_2})$ if it is $N$-times continuously differentiable in classical (Fréchet) sense and its $N$-th (Fréchet) derivative is $\gamma$-H\"older continuous. 
We say that $\varphi \in \mathcal{C}^{N+\gamma}_b (\R^{d_1};\R^{d_2})$ if, in addition, \begin{equation*}
    |\varphi|_{\mathcal{C}^{N+ \gamma}_b} := |\varphi|_\infty + \sum_{j=1}^N |D^j \varphi|_\infty + \sup_{x\ne y} \frac{|D^N \varphi(x) - D^N\varphi(y)|}{|x-y|^\gamma} <+\infty 
\end{equation*}
In particular, $\mathcal{C}^1(\R^{d_1};\R^{d_2})$ denotes the space of Lipschitz continuous functions from $\R^{d_1}$ to $\R^{d_2}$, not the space of continuously (Fréchet) differentiable functions. 
Similarly, $\mathcal{C}^2(\R^{d_1};\R^{d_2})$ is the space of continuously differentiable functions whose first derivative is Lipschitz continuous.
{We write $C^k$ for functions with continuous $k$-th derivatives for any $k \in  \mathbb{N}_{\ge 0}$, and $C^\alpha$ for $\alpha$-H\"older continuous paths with $\alpha \in (0,1)$.}

Given any $(Y_t)_t$ we denote the increment process as $\delta Y_{s,t} = Y_t - Y_s$. 
We write $\Psi \lesssim \Phi$ if there is a constant $C >0$ such that $\Psi \le C \Phi$.  
We denote by $\mathrm{Lin}(\R^{d_1},\R^{d_2})$ the space of linear (and bounded) functions from $\R^{d_1}$ to $\R^{d_2}$, with associated operator norm $|\varphi|_{\mathrm{Lin}} := \sup_{|x| \le 1} |\varphi(x)|$. 
Let $V$ be a Banach space and denote by $\{e_i, \, i=1,\dots,d\}$ the canonical basis of $\R^d$. 
For any map $f:V \to \R^d$, we denote by $f^i : V \to \R$ the corresponding $i$-th component map, $i=1,\dots,d$. 
Given $f : V \to \mathrm{Lin}(\R^m,\R^d)$ and for any $\kappa = 1,\dots,m$, we denote by $f_\kappa :V \to \R^d, \ f_\kappa(v) := f(v) e_\kappa$. In particular, $f(v) y = \sum_{\kappa=1}^m f_\kappa(v) y^\kappa$ for any $y \in \R^m$ and for any $v \in V$. Similarly, given $f: V \to \mathrm{Lin}(\R^m \otimes \R^m,\R^d)$, we denote by $f_{\kappa \lambda}(v) := f(v) (e_\kappa \otimes e_\lambda)$.

\medskip

\medskip


\section{Rough stochastic filtering} \label{sec:RoughFiltering}

\subsection{Rough stochastic dynamics of signal and observation } \label{sec:RSDXZ}
In classical filtering theory of diffusions, as reviewed in \cref{sec:review}, the signal/observation pair $(X,Y)$ is enhanced with likelihood process $Z$, which captures a Girasonv change of measure, from some original measure $\mathbb{P}^o$ to a new measure $\mathbb{P}$, which offers a better independence structure. With momentary focus on the %
(classical\footnote{Eg. \cite{pardoux2006filtrage}; also \cite[Rmk 2.1]{HP88} for reduction to case $k=I$.}) case of non-degenerate observation noise, 
consider general (``correlated'') joint dynamics of the form  
\begin{align*}
    d X_t & =  b (t, X_t, Y_t) d t + \sigma (t, X_t,  Y_t) d B_t (\omega) +  f (t, X_t, Y_t) \, d Y_t (\omega)
     \\
  d Y_t & =  
     k(t, Y_t) \, d W_t, \qquad \qquad \  Y_0 = 0  \\
     d Z_t & =  Z_t h (t, X_t, Y_t) \, dY_t, 
     \qquad   Z_0 = 1.
\end{align*}
Here $(B, W)$ are $\mathbb{P}$-independent Brownian motions, of respective dimension $d_W,d_B$. The process triplet $(X,Y,Z)$ takes values in Euclidean space of dimension $d_X+d_Y+1$. The observation process $Y$ is a local martingale under $\mathbb{P}$. The pair $(B, Y)$ are $\mathbb{P}$-independent and jointly constitute the driving noise for the signal $X$. Note that $Z$ can be written explicitly as stochastic exponential, involving the bracket of $Y$ with derivative 
$$ \dot{\langle Y \rangle}_t = \frac{d\langle Y \rangle_t}{dt} = (kk^\top)(t,Y).
$$

\medskip

\noindent {\em Rough paths.} This suggests to consider observation sample paths of $Y$ lifted to a space non-geometric rough paths with suitable bracket structure 
$$  
\mathscr{C }^{0, \alpha , 1} = \{ \BY \in
\mathscr{C}^{0, \alpha} ([0, T]) \mid t \mapsto \delta [\BY ]_{0, t} \in C^1
([0, T]) \}, \quad \dot{[\BY]}_t = \frac{d[\mathbf{Y}]_t}{dt}.
$$
With deterministic observation paths, we work on a filtered stochastic basis $(\Omega, \mathcal{F}, \{\mathcal{F}_t\}_{t \in [0,T]}, \mathbb{P})$ that supports a Brownian motion $B$. This is precisely the stochastic setup for rough stochastic differential equations (reviewed in \cref{sec:elem}), which allows us
to introduce the $(d_X+1)$-dimensional rough signal and likelihood process 
\begin{align} 
    d X^{\BY}_t &=  b (t, X^{\BY}_t; \BY )     d t + \sigma (t, X^{\BY}_t;\BY) d B_t (\omega) +     f (t, X^{\BY}_t ; \BY) \, d\BY_t  \label{eq:roughSDE_filtering} \\ 
    dZ^\BY_t &= Z^\BY_t h(t,X^\BY_t;\BY) \, d\BY_t, \qquad Z_0^\BY = 1 \in \R. \label{eq:Girsanovexponential_filtering} 
\end{align}
On the coefficients appearing in \eqref{eq:roughSDE_filtering} and \eqref{eq:Girsanovexponential_filtering}, we impose the following:

\begin{assumption}{E}\label{assum:E}

For any $\BY \in \mathscr{C}^{0,\alpha,1}$, the measurable functions 
    \begin{align*}
        b (\cdot, \cdot  \, ;\BY) &: [0,T] \times \R^{d_X}  \to \R^{d_X} \\
        \sigma (\cdot, \cdot  \, ;\BY) &: [0,T] \times \R^{d_X}  \to \mathrm{Lin}(\R^{d_B},\R^{d_X}) \equiv \R^{d_X \times d_B} \\
        f (\cdot, \cdot  \, ;\BY) &: [0,T] \times \R^{d_X}  \to \mathrm{Lin}(\R^{d_Y},\R^{d_X}) \equiv \R^{d_X \times d_Y} \\
        h (\cdot, \cdot  \, ;\BY) &: [0,T] \times \R^{d_X}  \to \mathrm{Lin}(\R^{d_Y},\R) \equiv \R^{1 \times d_Y} 
    \end{align*}
have the following properties: 
\begin{itemize}
    \item $(t,x) \mapsto (b(t,x ;\BY), \sigma(t, x ;\BY)) $ are bounded and uniformly Lipchitz in the sense that
    there exists a finite constant $\|b(\cdot, \cdot \, ; \BY)\|_{Lip}$ such that (and similar for $\sigma$) 
    \begin{equation*}    
    \sup_{t \in [0,T]} \sup_{\substack{x,y \in \R^{d_X}\\x \neq y}} \frac{|b(t,x;\BY) - b(t,y;\BY)|}{|x-y|} \le \|b(\cdot, \cdot \, ; \BY)\|_{Lip}. 
    \end{equation*} 
    \item  there is a measurable function $f'(\cdot, \cdot  \, ;\BY):[0,T] \times \R^{d_X}  \to \mathrm{Lin}(\R^{d_Y},\mathrm{Lin}(\R^{d_Y},\R^{d_X})) \equiv \mathrm{Lin}(\R^{d_Y} \otimes \R^{d_Y} , \R^{d_X})$ and there is a constant $\beta \in \left( \frac{1}{\alpha}, 3 \right]$ such that 
    \begin{equation*}
        [t \mapsto (f(t,\cdot \, ;\BY), f'(t,\cdot \, ;\BY) )] \in \mathscr{D}_Y^{2\alpha} \mathcal{C}^\beta_b  
    \end{equation*}
in the sense of \cref{def:stochasticcontrolledvectorfields_intro} with $Y = \pi_1 \BY$;
\item  there is a measurable function $h'(\cdot, \cdot  \, ;\BY):[0,T] \times \R^{d_X} \to \mathrm{Lin}(\R^{d_Y},\mathrm{Lin}(\R^{d_Y},\R))$ such that $[t \mapsto (h(t,\cdot \, ; \BY), h'(t,\cdot \, ;\BY) )] \in \mathscr{D}_Y^{2\alpha} \mathcal{C}^2_b $
in the sense of \cref{def:stochasticcontrolledvectorfields_intro}.
\end{itemize}
\end{assumption}

\begin{theorem} \label{thm:wellposedness_roughfiltering}
  Let $X^\BY_0 \in L_0(\Omega,\mathcal{F}_0;\R^{d_X})$. Under \cref{assum:E}, there exists a unique $L_{2,\infty}$-integrable solution $X^\BY$ to equation \eqref{eq:roughSDE_filtering} in the sense of \cref{def:integrablesolutionsRSDEs}
and also a unique $L_2$-integrable solution  $Z^\BY$ to the linear RSDE \eqref{eq:Girsanovexponential_filtering}, given by the \emph{rough stochastic exponential} \begin{equation} \label{eq:roughstochasticexponential}
    Z^\BY_t := \exp(I_t^\BY) := \exp \left( \int_0^t h(r,X^\BY_r;\BY) \, d\BY_r - \frac12 \int_0^t h(r,X^\BY_r ; \BY) \dot{[\BY]}_r h(r,X^\BY_r ; \BY)^\top \, dr \right)
    . 
\end{equation} 

\end{theorem}

\begin{proof}
    The well-posedness of equation \eqref{eq:roughSDE_filtering} under \cref{assum:E} follows directly from \cite[Theorem 4.6]{FHL21}. 
    In particular, let us recall that $X^\BY$ is a continuous and adapted process such that, for any $0 \le s \le t \le T$,  \begin{align*}
        X^\BY_t - X^\BY_s = & \int_s^t b(r,X^\BY_r;\BY) \, dr + \sum_{\theta=1}^{d_B} \int_s^t \sigma_\theta(r,X^\BY_r;\BY) \, dB^\theta_r + \sum_{\kappa=1}^{d_Y} f_\kappa(s,X^\BY_s;\BY) \delta Y^\kappa_{s,t} \\
       & + \sum_{\kappa,\lambda=1}^{d_Y} \big( \sum_{j=1}^{d_X} \partial_{x^j} f_\lambda(s,X_s^\BY;\BY) f^j_\kappa (s,X_s^\BY;\BY) + f'_{\lambda \kappa}(s,X_s^\BY;\BY)    \big) \mathbb{Y}^{\kappa \lambda}_{s,t} + X^\natural_{s,t}
    \end{align*}
    with $$ \sup_{0 \le s < t \le T} \frac{\|\E(|X^\natural_{s,t}|^2 \mid \mathcal{F}_s^B)^\frac{1}{2}\|_\infty}{|t-s|^{2\alpha}} < +\infty, \quad  \sup_{0 \le s < t \le T} \frac{\|\E(X^\natural_{s,t} \mid \mathcal{F}_s^B)\|_\infty}{|t-s|^{3\alpha}} < +\infty.$$ 

    By \cite[Lemma 3.11]{FHL21}, $[t \mapsto (h(t,X^\BY_t;\BY), D_x h(t,X^\BY_t;\BY) f(t,X^\BY_t;\BY) + h'(t,X^\BY_t;\BY))]$ is a stochastic controlled rough path in $\mathbf{D}_Y^{2\alpha}L_{2,\infty}$. 
    Hence, by John-Nirenberg inequality \cite[Proposition 2.8]{FHL21}, \begin{equation*}
        \sup_{t \in [0,T]} \E(e^{p |\int_0^t h(r,X^\BY_r;\BY) \, d\BY_r|}) < +\infty, \qquad \forall p<\infty .
    \end{equation*} 
    In particular, defining $Z^\BY$ as in \eqref{eq:roughstochasticexponential}, $Z^\BY_t \in \bigcap_{p \in [1,\infty)} L_p(\Omega;\R)$ for any $t \in [0,T]$ and one sees that $Z^\BY$ is an $L_2$-integrable solution to \eqref{eq:Girsanovexponential_filtering} via an application of the rough stochastic It\^o formula (cf. \cref{eq:roughItoformula_roughItoprocess}.3).
    We therefore have, for any $0 \le s \le t \le T$,
    \begin{align*}
        Z^\BY_t - Z^\BY_s = & \sum_{\kappa=1}^{d_Y} Z_s ^\BY h_\kappa(s,X^\BY_s;\BY) \delta Y^\kappa_{s,t}  + \sum_{\kappa,\lambda=1}^{d_Y} \big( Z^\BY_s h_\lambda (s,X^\BY_s;\BY) h_\kappa(s,X^\BY_s;\BY) \\ 
        & + \sum_{j=1}^{d_X}  \partial_{x^j} h_\lambda(s,X^\BY_s;\BY) Z^\BY_s f^j_\kappa(s,X^\BY;\BY) +  Z^\BY_s h'_{\lambda \kappa}(s,X^\BY_s;\BY  \big) \mathbb{Y}_{s,t}^{\kappa \lambda} + Z^\natural_{s,t}
    \end{align*}
    with 
    $\sup_{0 \le s < t \le T} {\|Z^\natural_{s,t}\|_2}/{|t-s|^{2\alpha}} < +\infty$ and $\sup_{0 \le s < t \le T} {\|\E(Z^\natural_{s,t} \mid \mathcal{F}_s^B)\|_2}/{|t-s|^{3\alpha}} < +\infty$.
    To prove uniqueness for \eqref{eq:Girsanovexponential_filtering}, let $\tilde Z^\BY$ be any other $L_2$-integrable solution. With $Z_t^\BY >0$ for any $t \in [0,T]$, we conclude from \begin{align*}
        d \left( \frac{\tilde Z^\BY}{Z^\BY} \right)_t &= \tilde Z^\BY_t \left( -\frac{1}{Z_t^\BY} h(t,X^\BY_t ; \BY) \, d\BY_t + \frac{1}{Z_t^\BY} h(t,X^\BY_t ; \BY) \dot{[\BY]}_t h(t,X^\BY_t ; \BY)^\top \, dt  \right) \\
        & \quad + \frac{1}{Z_t^\BY} \, d \tilde Z^\BY_t + \tilde Z^\BY_t h(t, X^\BY_t ; \BY) \dot{[\BY]}_t \left( -\frac{1}{Z^\BY_t} h(t,X^\BY_t ; \BY) \right)^\top =0,
    \end{align*}
    consequence of the rough stochastic It\^o formula \cref{eq:roughItoformula_roughItoprocess}.2 .
\end{proof}

\begin{remark} \label{rem:afterE} At this stage, no measurability, causality or anything else is required on the coefficients with respect to $\BY = (\pi_1 \BY, \pi_2 \BY)$, which is fixed. This will enter the picture in \cref{section:bridgingroughandstochasticfiltering} when we make the link to the original stochastic problem.
 This setup indeed covers the case $b (t, x; \BY) = b ( t, x, (\pi_1 \BY)_t)$, similar for $\sigma, f, h_1$ and $h$. 
 \end{remark}

\subsection{Rough Kallianpur--Striebel}




    

Let $\BY \in \mathscr{C}^{0,\alpha,1}$ and $\varphi\in \mathcal{B}_b(\R^{d_X})$. Motivated by the corresponding stochastic objects (cf. \cref{sec:stochKSformula}, classical in filtering theory),  \cref{thm:wellposedness_roughfiltering} allows us to
define the {\em rough unnormalised filter}
\begin{equation} \label{eq:roughunnormfilter}
    \mu_t^\BY (\varphi) := \E( \varphi(X_t^\BY) Z^\BY_t), \qquad t \in [0,T],
\end{equation}
and then the {\em rough Kallianpur--Striebel map}
\begin{equation} \label{eq:roughKSform} 
\BY \mapsto \varsigma^\BY_t (\varphi)
   := \frac{\mu^{\BY}_t (\varphi) }{
   \mu^{\BY}_t (1)}. 
\end{equation}
Note $\mu_t^\BY \in \mathcal{M}_F(\R^{d_X}), \, \varsigma_t^\BY \in
  \mathcal{P}(\R^{d_X})$, the space of finite (resp. probability) Borel measures. 

\begin{notation}
    Given $Y,\bar Y \in C^\alpha([0,T];\R^{d_Y})$ and $(g,g') \in \mathscr{D}_Y^{2\alpha} \mathcal{C}^\beta_b$, $(\bar g, \bar g') \in \mathscr{D}_{\bar Y}^{2\alpha} \mathcal{C}^\beta_b$ in the sense of \cref{def:stochasticcontrolledvectorfields_intro} for some $\beta = N + \kappa > 1$, $N \in \mathbb{N}_{\ge 0}$ and $\kappa \in (0,1]$,  we denote by  
\begin{equation*}
    \llbracket g,g'; \bar g,  \bar g' \rrbracket_{Y,\bar Y;2\alpha;\beta} := \sum_{i=0}^{N} \llbracket \delta D_x^i(g- \bar g) \rrbracket_\alpha + \sum_{j=0}^{N-1} ( \llbracket \delta D_x^j (g' - \bar g') \rrbracket_\alpha + \llbracket R^{D_x^j g} - \bar R^{D_x^j \bar g} \rrbracket_{2\alpha}  )
\end{equation*}
with $\bar R^{D_x^j \bar g}_{s,t}(x) :=D_x^j \bar g(t,x) - D_x^j\bar g(s,x) - D_x^j\bar g'(s,x) \delta \bar{Y}_{s,t}$. 
Similarly, given $(G,G') \in \mathbf{D}_Y^{2\alpha}L_{p} \mathrm{Lin}$ and $(\bar G, \bar G') \in \mathbf{D}_{\bar Y}^{2\alpha} L_{p} \mathrm{Lin}$ in the sense of \cref{def:stochasticcontrolledlinearvectorfield}, we denote by \begin{equation*}
    \llbracket G,G'; \bar G,  \bar G' \rrbracket_{Y,\bar Y;2\alpha;p}^{\mathrm{Lin}} :=  \sup_{0 \le s < t \le T} \Big( \frac{\| \delta (G - \bar G)_{s,t}\|_p^\mathrm{Lin}}{|t-s|^\alpha} + \frac{\| \delta (G' - \bar G')_{s,t}\|_p^\mathrm{Lin}}{|t-s|^\alpha} + \frac{\| R^G_{s,t} - \bar R^{\bar G}_{s,t}\|^\mathrm{Lin}_p}{|t-s|^{2\alpha}} \Big) 
\end{equation*}
with $\| \cdot \|_p^\mathrm{Lin} := \E ( |\cdot|_\mathrm{Lin}^p)^\frac1p$, $R^G_{s,t}x := G_t x - G_s x - G'_s x \delta Y_{s,t}$ and $\bar R^{\bar G}_{s,t}x := \bar G_t x - \bar G_s x - \bar G'_s x \delta \bar Y_{s,t}$.
\end{notation}

\begin{assumption}{R} \label{assumptionR}   
    There is a finite constant $C$ such that, for any $\BY, \bar \BY \in \mathscr{C}^{0,\alpha,1}$, 
     the following conditions hold:
    \begin{itemize}
        \item $\sup_{\mathbf{Z} \in \mathscr{C}^{0,\alpha,1} } \|b(\cdot, \cdot \, ; \mathbf{Z})\|_{Lip} < +\infty$ and $$\sup_{t \in [0,T] } \sup_{x \in  \R^{d_X}} |b(t,x;\BY) - b(t,x; \bar \BY)| \le C \rho_\alpha(\BY, \bar \BY),$$ with similar conditions for $\sigma$;
        \item $\sup_{\mathbf{Z} \in \mathscr{C}^{0,\alpha,1}}\|(f(\cdot,\cdot \, ; \mathbf{Z}) , f'(\cdot,\cdot \, ; \mathbf{Z})) \|_\beta + \llbracket f(\cdot,\cdot \, ; \mathbf{Z}) , f'(\cdot,\cdot \, ; \mathbf{Z}) \rrbracket_{Z;2\alpha;\beta} < +\infty$ and, denoting by $(f,f') = (f(\cdot,\cdot \, ; \BY), f'(\cdot,\cdot\, ; \BY)) $ and $(\bar f, \bar f') = (f(\cdot,\cdot \, ; \bar \BY),  f'(\cdot,\cdot\, ; \bar \BY)) $, 
        \begin{equation*}
            \|(f - \bar f , f' - \bar f' )\|_{\beta-1} + \llbracket f,f' ; \bar f, \bar f' \rrbracket_{Y, \bar Y; 2\alpha; \beta-1} \le C \rho_\alpha(\BY, \bar \BY) ;
        \end{equation*} 
        
        \item $[t \mapsto (h(t,\cdot \, ; \BY), h'(t,\cdot \, ;\BY) )] \in \mathscr{D}_Y^{2\alpha} \mathcal{C}^3_b$ with $$\sup_{\mathbf{Z} \in \mathscr{C}^{0,\alpha,1}}\|(h(\cdot,\cdot \, ; \mathbf{Z}) , h'(\cdot,\cdot \, ; \mathbf{Z})) \|_3 + \llbracket h(\cdot,\cdot \, ; \mathbf{Z}) , h'(\cdot,\cdot \, ; \mathbf{Z}) \rrbracket_{Z;2\alpha;3} < +\infty$$ and, denoting by $(h,h') = (h(\cdot,\cdot \, ; \BY), h'(\cdot,\cdot\, ; \BY)) $ and $(\bar h, \bar h') = (h(\cdot,\cdot \, ; \bar \BY),  h'(\cdot,\cdot\, ; \bar \BY)) $, 
        \begin{equation*}
            \|(h - \bar h , h' - \bar h' )\|_{1} + \llbracket h,h' ; \bar h, \bar h' \rrbracket_{Y, \bar Y; 2\alpha; 2} \le C \rho_\alpha(\BY, \bar \BY);
        \end{equation*} 
        \item $\| X^\BY_0 - X^{\bar \BY}_0 \|_2 \le C \rho_\alpha(\BY, \bar \BY)$. 
    \end{itemize}
\end{assumption}
\ 

\begin{lem} \label{lemma:Lipschitzestimates_RSDEs}
    Let \cref{assum:E} and \cref{assumptionR} be in force. Then, for any $R > 0$ and for any $\BY, \bar \BY \in (\mathscr{C }^{0, \alpha , 1}, \rho_\alpha)$ with $\nn{\BY}_\alpha \vee \nn{\bar \BY}_\alpha \le R$, there exists a constant $K = K_R > 0$ such that \begin{equation*}
        \sup_{t \in [0,T]} \|X_t^\BY - X_t^{\bar \BY} \|_2 \le K \rho_\alpha(\BY, \bar \BY) \qquad \text{and} \qquad \sup_{t \in [0,T]} \|Z_t^\BY - Z_t^{\bar \BY} \|_1 \le K \rho_\alpha(\BY, \bar \BY). 
    \end{equation*}
\end{lem}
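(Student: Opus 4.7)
The plan is to apply the continuous dependence results for RSDEs and linear RSDEs from \cite{FHL21} and \cite{BCN24}, using \cref{assumptionR} to quantify how the data perturb when $\BY$ is replaced by $\bar\BY$. Throughout, fix $\BY,\bar\BY \in \mathscr{C}^{0,\alpha,1}$ with $\nn{\BY}_\alpha \vee \nn{\bar\BY}_\alpha \le R$.

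\textbf{Step 1 (bound for $X$).} I would invoke the stability part of \cite[Theorem 4.6]{FHL21} for \eqref{eq:roughSDE_filtering}. Under the uniform-in-$\BY$ bounds granted by the first parts of \cref{assumptionR} (Lipschitz in $x$ for $b,\sigma$ with constants independent of the rough path, and uniform $\mathscr{D}_Y^{2\alpha}\mathcal{C}_b^\beta$-bounds for $(f,f')$), one has a priori estimates $\sup_t\|X_t^\BY\|_p + \sup_t\|X_t^{\bar\BY}\|_p \le C_{p,R}$ for every $p$. The stability statement then yields
\[
\sup_{t\in[0,T]}\|X_t^\BY - X_t^{\bar\BY}\|_2 \lesssim_{R} \|X_0^\BY - X_0^{\bar\BY}\|_2 + \Delta_{\mathrm{drift}} + \Delta_{\mathrm{diff}} + \Delta_{\mathrm{rough}} + \rho_\alpha(\BY,\bar\BY),
\]
where $\Delta_{\mathrm{drift}} := \sup_{t,x}|b(t,x;\BY)-b(t,x;\bar\BY)|$, $\Delta_{\mathrm{diff}}$ is the analogue for $\sigma$, and $\Delta_{\mathrm{rough}} := \|(f-\bar f, f'-\bar f')\|_{\beta-1} + \llbracket f,f'; \bar f,\bar f'\rrbracket_{Y,\bar Y;2\alpha;\beta-1}$. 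By the three bullets of \cref{assumptionR} and the last bullet for $X_0$, every term on the right is bounded by $C\rho_\alpha(\BY,\bar\BY)$, giving the first claimed estimate.

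\textbf{Step 2 (bound for $Z$).} For the linear RSDE \eqref{eq:Girsanovexponential_filtering}, I would apply the stability result for linear RSDEs from \cite[Section 3]{BCN24}. The linear vector field, read off from the RSDE, is $(G_t, G_t')z = (z\,h(t,X_t^\BY;\BY),\ z\,[(D_xh)f + h'](t,X_t^\BY;\BY) + z\,h(t,X_t^\BY;\BY)\otimes h(t,X_t^\BY;\BY))$, and similarly for $\bar G,\bar G'$ with $X^{\bar\BY},\bar\BY$. By \cite[Lemma 3.11]{FHL21} applied to the $\mathcal{C}^3_b$-regular $(h,h')$ composed with $X^\BY$, together with Step~1 and the third bullet of \cref{assumptionR}, one obtains
\[
\llbracket G,G'; \bar G,\bar G'\rrbracket_{Y,\bar Y;2\alpha;2}^{\mathrm{Lin}} + \|(G-\bar G,G'-\bar G')\|_{\infty} \lesssim_R \rho_\alpha(\BY,\bar\BY) + \sup_t\|X_t^\BY - X_t^{\bar\BY}\|_2 \lesssim_R \rho_\alpha(\BY,\bar\BY).
\]
Feeding this into the linear-RSDE stability estimate (whose integrability loss from $L_2$ to $L_1$ is exactly what the statement allows, because $Z^\BY \in L_2$ only) yields $\sup_t \|Z_t^\BY - Z_t^{\bar\BY}\|_1 \le K\rho_\alpha(\BY,\bar\BY)$.

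\textbf{Main obstacle.} The one genuine difficulty is the bookkeeping in Step~2 for controlled rough paths living \emph{above different base paths} $Y$ and $\bar Y$. One must combine three sources of $\BY$-dependence in $G = Z\cdot h(\cdot,X^\BY;\BY)$ --- the explicit appearance of $\BY$ in $h$, the implicit appearance through $X^\BY$, and the fact that $G'$ is a sum including $(D_x h)f$ which itself inherits rough-path dependence --- while keeping all estimates in the mixed $\llbracket\cdot;\cdot\rrbracket_{Y,\bar Y}$ norms. This is essentially a careful application of the composition/chain rule for stochastic controlled rough paths (\cite[Lemma 3.11]{FHL21}) in the two-rough-path setting, relying crucially on the $\mathcal{C}^3_b$ (rather than $\mathcal{C}^2_b$) regularity of $h$ that \cref{assumptionR} provides, since one loses one derivative when differencing controlled vector fields over different rough paths.
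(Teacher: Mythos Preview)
Your proposal is correct and follows essentially the same route as the paper: apply the RSDE stability estimate for $X$, then the linear-RSDE stability estimate for $Z$, using the composition lemma to control the coefficients of the $Z$-equation through the already-established bound on $X$. The only discrepancies are in citation numbers---the paper uses \cite[Theorem 4.9]{FHL21} (not Theorem 4.6) for the $X$ stability, \cite[Proposition 3.13]{FHL21} (not Lemma 3.11) for the composition estimate over two base paths, and \cite[Theorem 3.13]{BCN24} for the linear-RSDE stability---and in bookkeeping: the paper tracks $\zeta^\BY = h(\cdot,X^\BY;\BY)$ as the controlled coefficient (without the $z$-multiplication) rather than your $G$, but this is an equivalent parametrisation.
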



\begin{proof} 
    The first bound follows as a straightforward application of the stability estimate in \cite[Theorem 4.9]{FHL21}, thanks to the assumptions on $b,\sigma$ and $(f,f')$. 
    For definiteness, note that \begin{equation*}
        \|X_t^\BY - X_t^{\bar \BY}\|_2 \le \| X^\BY_0 - X^{\bar \BY}_0 \|_2 + \| \sup_{t \in [0,T]} |\delta X^\BY_{0,t} - \delta X^{\bar \BY}_{0,t}| \|_2  . 
    \end{equation*}

    To deduce the second bound, we recall \eqref{eq:roughstochasticexponential} and  note that $|Z^\BY_t - Z^{\bar \BY}_t| = |e^{I_t^\BY} - e^{I^{\bar \BY}_t} | \le |I_t^\BY - I^{\bar \BY}_t| \max\{e^{I_t^\BY}, e^{I^{\bar \BY}_t} \}$. Hence, 
    $
        \|Z^\BY_t - Z^{\bar \BY}_t\|_1 \le \|I_t^\BY - I^{\bar \BY}_t\|_2  \|\max\{Z_t^\BY, Z^{\bar \BY}_t \}\|_2 $
    and the conclusion follows combining local Lipschitz continuity of rough stochastic integrals (see \cite[Corollary 3.5]{FHL21}) and stability for stochastic controlled rough paths (see \cite[Proposition 3.13]{FHL21}). 
\end{proof}

\begin{theorem} Under \cref{assum:E} and \cref{assumptionR} the following statements hold, uniformly in $t \in [0,T]$: \begin{enumerate} 
    \item[(i.1)] for any fixed bounded and Lipschitz function $\varphi: \R^{d_X} \to \R$ , the map $\BY \mapsto \mu_t^\BY(\varphi) $ is locally Lipschitz on  $(\mathscr{C }^{0, \alpha , 1},\rho_\alpha)$; 
    \item[(i.2)] for any fixed bounded and Lipschitz function $\varphi: \R^{d_X} \to \R$ , the map $\BY \mapsto \varsigma_t^\BY(\varphi) $ is locally Lipschitz on  $(\mathscr{C }^{0, \alpha , 1},\rho_\alpha)$;
  \item[(ii.1)] the map $\BY \mapsto \mu_t^\BY \in
  \mathcal{M}_F(\R^{d_X})$ is continuous  on  $(\mathscr{C }^{0, \alpha , 1},\rho_\alpha)$, if $\mathcal{M}_F(\R^{d_X})$ is equipped with topology of weak convergence; 
  \item[(ii.2)] the map $\BY \mapsto \varsigma_t^\BY \in
  \mathcal{P}(\R^{d_X})$ is continuous  on  $(\mathscr{C }^{0, \alpha , 1},\rho_\alpha)$, if $\mathcal{P}(\R^{d_X})$ is equipped with topology of weak convergence. 
\end{enumerate}
\end{theorem}

\begin{proof}
Let $\BY, \bar \BY \in \mathscr{C}^{0,\alpha,1}$ and let $t \in [0,T]$. \begin{itemize}
    \item[\textit{(i.1)}] Let $\varphi$ be a bounded Lipschitz function. Denote by $|\varphi|_{Lip}$ its Lipschitz constant and let $|\varphi|_\infty := \sup_{x \in \R^{d_X}} |\varphi(x)| <+\infty$. A trivial application of H\"older's inequality shows that
    \begin{align*}
        |\mu_t^\BY(\varphi) - \mu_t^{\bar \BY}(\varphi)| &\le \E(|\varphi(X_t^\BY) - \varphi(X_t^{\bar \BY})| Z^\BY_t) + \E(|\varphi(X_t^{\bar \BY})| |Z^\BY_t - Z^{\bar\BY}_t|) \\
        &\le |\varphi|_{Lip} \|X_t^\BY - X_t^{\bar \BY}\|_2 \|Z^\BY_t\|_2 + |\varphi|_\infty \|Z^\BY_t - Z^{\bar\BY}_t\|_1
    \end{align*} 
    and the conclusion follows from \cref{lemma:Lipschitzestimates_RSDEs}. 
    \item[\textit{(i.2)}] Note that $\mu_t^\BY(1) = \E(e^{I_t^\BY}) >0$. By \textit{(i.1)}, $\BY \mapsto \mu_t^\BY(1)$ is continuous and therefore it is locally uniformly bounded away from zero. The conclusion follows by recalling \eqref{eq:roughKSform} and noting that \begin{equation*}
        |\varsigma_t^\BY(\varphi) - \varsigma_t^{\bar \BY}(\varphi)| \le \frac{1}{|\mu_t^\BY(1)|} |\mu_t^\BY(\varphi) - \mu_t^{\bar \BY}(\varphi)| + \frac{|\mu_t^{\bar \BY}(\varphi)|}{|\mu_t^\BY(1)| |\mu_t^{\bar \BY}(1)|} |\mu_t^\BY(1) - \mu_t^{\bar \BY}(1)|.
    \end{equation*}
    \item[\textit{(ii.1)}] Let $(\BY^n)_n$ be a sequence in $\mathscr{C}^{0,\alpha,1}$ such that $\rho_\alpha(\BY^n, \BY) \to 0$ as $n \to +\infty$. Let $\varphi \in C^0_b(\R^{d_X};\R)$ be arbitrary. 
    By the continuous mapping theorem and dominated convergence, $\varphi(X_t^{\BY^n}) \to \varphi(X_t^\BY)$ in $L_2(\Omega;\R)$.
    Hence, \begin{align*}
        |\mu_t^{\BY^n}(\varphi) - \mu_t^\BY(\varphi)| &\le \E(|\varphi(X^{\BY^n}_t) - \varphi(X^{\BY}_t)| Z^\BY_t) + \E(|\varphi(X^{\BY^n}_t)| |Z^{\BY^n}_t - Z^\BY_t|) \\
        &\le  \|\varphi(X^{\BY^n}_t) - \varphi(X^{\BY}_t)\|_2 \|Z^\BY_t\|_2 + |\varphi|_\infty \|Z^{\BY^n}_t - Z^\BY_t\|_1 \to 0 
    \end{align*}
    as $n \to +\infty$. 
    \item[\textit{(ii.2)}] The continuity of $\BY \mapsto \varsigma^\BY_t(\varphi)$ for any continuous and bounded $\varphi$ follows from \textit{(ii.2)} and from the fact that $\BY \mapsto \mu_t^\BY(1)$ is continuous and positive.
\end{itemize}
\end{proof}

\subsection{Rough Zakai equation in spaces of measures: existence} \label{sec:roughZakai_existence}

We show that $\mu^\BY$ defined in \eqref{eq:roughunnormfilter}
is the (unique) solution to the rough Zakai equation
\begin{equation*}
        \mu^{\BY}_t (\varphi)   =  {\mu _0}^{\BY} (\varphi) +  \int_0^t \mu^{\BY}_r  (A^{\BY}_r \varphi) dr + \int_0^t  \mu^{\BY}_r (\Gamma^{\BY}_r \varphi)  \, d \BY_r , \qquad 
        \mu_0^\BY = \text{Law}(X_0^\BY)
\end{equation*}
where
\begin{equation} \label{eq:operatorA^Y}
    A^{\BY}_t \varphi (x) = \sum_{i = 1}^{d_X}
   (\bar{b}^{[\BY]})^i (t, x; \BY) \partial_i \varphi (x) + \frac{1}{2}   \sum_{i, j = 1}^{d_X} (\bar{a}^{[\BY]})^{i j} (t, x; \BY)   \partial^2_{i j} \varphi (x),
\end{equation}
with
\begin{align*}
  (\bar{a}^{[\BY]})^{ij} (t,x;\BY) &: = \sum_{\theta = 1}^{d_B} \sigma^i_\theta
  \sigma_\theta^j  (t,x;\BY) + \sum_{\kappa, \lambda = 1}^{d_Y} f^i_{\kappa}  (t,  x; \BY)  \dot{[\BY]}_t^{\kappa \lambda} f^j_{\lambda}  (t,x;\BY), \\
  (\bar{b}^{[\BY]})^i (t,x;\BY) &:= b^i (t,x;\BY) +
   \sum_{\kappa, \lambda = 1}^{d_Y} f^i_{\kappa} (t,x;\BY)
   \dot{[\BY]}_t^{\kappa \lambda} h^{\lambda} (t,x;\BY),
\end{align*}
and 
$\Gamma^{\BY}_t : =
((\Gamma^{\BY}_t)_1, \ldots, (\Gamma^{\BY}_t)_{d_Y})$ with
\begin{equation} \label{eq:operatorGamma^Y}
    (\Gamma^{\BY}_t)_{\kappa} \varphi (x) = \sum_{i = 1}^{d_X}
   f^i_{\kappa}  (t,x;\BY)  \partial_i \varphi (x) +  h _{\kappa}   (t,x;\BY) \varphi (x) , \qquad \kappa = 1, ..., d_Y .
\end{equation} 
Formally, the operator families $A^\BY_t$, $\Gamma^{\BY}_t$ are  as in the stochastic case (i.e. \eqref{eq:operatorA} and \eqref{eq:stochGamma}), except that $Y =\pi_1 \BY$ and $\langle Y, Y \rangle = [\BY]$ are now deterministic, in contrast to the stochastic case where $Y = Y (\omega)$ was random. 

\noindent The following approximations (cf.\ \cref{assum:E} for a rigorous meaning) \begin{align*}
    f^i_\kappa (t,x; \BY) - f^i_\kappa(s,x;\BY) &\approx \sum_{\eta=1}^{d_Y} (f'_{\kappa \eta})^i (s,x;\BY) \delta Y^\eta_{s,t} \qquad i=1,\dots,d_X \\
    h_\kappa(t,x;\BY) - h_\kappa(s,x;\BY) & \approx \sum_{\eta=1}^{d_Y} h'_{\kappa \eta} (s,x;\BY) \delta Y^\eta_{s,t}
\end{align*}
justify the definition of
\[
(\Gamma^{\BY}_t)' \assign 
\left(
  \begin{array}{cccc}
    (\Gamma^{\BY}_t)_{11}' & \cdots  & (\Gamma^{\BY}_t)'_{1 d_Y} \\
    \vdots & \ddots &         \vdots \\
    (\Gamma^{\BY}_t)_{d_Y 1}'  & \cdots & (\Gamma^{\BY}_t)'_{d_Y d_Y}
  \end{array}
\right)
\]
with
\[
    (\Gamma^{\BY}_t)_{\kappa \lambda}' \varphi (x)  :=  \sum_{i  = 1}^{d_X}  (f_{\kappa \lambda}')^i (t, x; \BY) \partial_i \varphi (x) +  h_{\kappa  \lambda}' (t, x;\BY) \varphi (x) .
\]

\begin{definition}[Rough Zakai] \label{def:solutionroughZakai}
  A measure-valued solution to
  \begin{equation} \label{eq:roughZakai_CITE}
    \mu_t (\varphi) = \mu _0 (\varphi) + \int_0^t \mu_r (A^\BY_r \varphi) \, dr + \int_0^t \mu_r (\Gamma^{\BY}_r \varphi) \,  d \BY_r 
  \end{equation}
  is a weakly continuous \footnote{The curve $\mu:[0,T] \to \mathcal{M}(\R^{d_X})$ is weakly continuous if, for any $\varphi \in \mathcal{C}^0_b(\R^{d_X};\R)$, the map $[0,T] \ni t \mapsto \mu_t(\varphi) = \int_{\R^{d_X}} \varphi(x) \, \mu_t(dx)  \in \R$ is continuous.} curve $\mu : [0, T] \rightarrow
  \mathcal{M}_F (\mathbb{R}^{d_X})$ such that, for any test function $\varphi \in \mathcal{C}^3_b (\mathbb{R}^{d_X}
  ; \mathbb{R})$, the
  following properties are satisfied: 
  \begin{enumerate}[(i)]
    \item the map $t \mapsto \mu_t  (A^\BY_t \varphi)$ 
    belongs to $L_1  ([0, T], dt)$;
    
    \item
    there is $C = C_{\varphi} > 0$ uniform over bounded sets of
    $\varphi$ in $\mathcal{C}^3_b$ such that, for any $0 \le s \le t
    \le T$ and for any $\kappa, \lambda = 1, \ldots, d_Y$,    
    \begin{align*}
      & | \mu_t ((\Gamma_t)_{\kappa}^{\BY}
      (\Gamma_t)_{\lambda}^{\BY} \varphi +
      (\Gamma_t^{\BY})_{\lambda \kappa}' \varphi) - \mu_s
      ((\Gamma_s)_{\kappa}^{\BY} (\Gamma_s)_{\lambda}^{\BY}
      \varphi + (\Gamma_s^{\BY})_{\lambda \kappa}' \varphi) | \le
      C_{\varphi}  |t - s|^{\alpha}\\
      & | \mu_t  ((\Gamma_t)_{\kappa}^{\BY} \varphi) - \mu_s 
      ((\Gamma_s)_{\kappa}^{\BY} \varphi) - \sum_{\eta = 1}^{d_Y}
      \mu_s  ((\Gamma_s)_{\eta}^{\BY}
      (\Gamma_s)_{\kappa}^{\BY} \varphi +
      (\Gamma_s^{\BY})_{\kappa \eta}' \varphi) \delta Y_{s, t}^{\eta}
      | \le C_{\varphi}  |t - s|^{2 \alpha} ;
    \end{align*}
    \item
    the following Davie-type expansion holds for any $0 \le s \le t \le
    T$, 
    \begin{multline*} \label{eq:Davieexpansion_roughZakai}
    \mu_t (\varphi) - \mu_s (\varphi)  =  \int_s^t 
    \mu_r
      (A^\BY_r \varphi) \,   dr \\ + \sum_{\kappa = 1}^{d_Y} \mu_s 
      ((\Gamma^{\BY}_s)_{\kappa} \varphi) \delta Y^{\kappa}_{s, t} 
       + \sum_{\kappa, \lambda = 1}^{d_Y} \mu_s 
      (((\Gamma^{\BY}_s)_{\kappa} (\Gamma_s^{\BY})_{\lambda}
      + (\Gamma_s^{\BY})'_{\lambda \kappa}) \varphi) \mathbb{Y}_{s,
      t}^{\kappa \lambda} + \mu_{s, t}^{\varphi, \natural} 
    \end{multline*}
    where $\mu_{s, t}^{\varphi, \natural} = o (| t - s |)$ as $| t - s |
    \rightarrow 0$.
  \end{enumerate}
  If the initial value $\mu_0 = \nu \in \mathcal{M}_F (\mathbb{R}^{d_X})$ is
  specified, we say that $\mu$ is a solution starting from $\nu$.
\end{definition}

\begin{proposition} \label{prop:roughZakai_integralform}
Let $\mu : [0, T] \to \mathcal{M}_F (\mathbb{R}^{d_X})$ satisfy condition (ii) of the previous definition. Then the following limit exists for any $t
  \in [0, T]$ along any sequence of partitions $\pi$ of $[0, t]$ whose mesh
  tends to zero:
  \begin{multline*}
    \int_0^t \mu_r (\Gamma^{\BY}_r \varphi) d \mathbf{Y}_r \assign \lim_{| \pi | \to
    0}  \sum_{[s, u] \in \pi} (\mu_s ((\Gamma^{\BY}_s)_{\kappa} \varphi) \nobracket
    \delta Y^{\kappa}_{s, u} + \nobracket \mu_s ((\Gamma^{\BY}_s)_{\kappa}
    (\Gamma^{\BY}_s)_{\lambda} \varphi + ((\Gamma^{\BY}_s)')_{\lambda \kappa} \varphi)
    \mathbb{Y}^{\kappa \lambda}_{s, u}) .
  \end{multline*}
In particular, condition $(iii)$ of the previous definition can be equivalently
  replaced by
  \begin{itemize}
    \item[(iii)'] for any $t \in [0, T]$
    \[ \mu_t (\varphi) = \mu_0 (\varphi) + \int_0^t   \mu_r
      (A^\BY_r \varphi) \, dr + \int_0^t \mu_r (\Gamma_r^\BY \varphi) \, d\BY_r,\footnote{The middle integral on the right-hand side is a Lebesgue integral with $L_1$-integrand, cf. (i), the far right integral is a bona fide rough integral, with controlled integrands, cf. (ii).}
    \]
  \end{itemize}
  and the quantity $\mu_{s, t}^{\varphi,\natural}$ in fact satisfies $|
  \mu_{s, t}^{\varphi,\natural} | \le K_{\varphi}  |t - s|^{3 \alpha}$ for
  any $s, t \in [0, T]$, where $K_{\varphi} > 0$ is uniformly over bounded
  sets of $\varphi$ in $\mathcal{C}^3_b (\mathbb{R}^d ; \mathbb{R})$. 
\end{proposition}

\begin{proof}
  The proof is essentially a consequence of the sewing lemma (see \cite[Lemma 4.2]{FH20}). Let $t \in [0, T]$. For any $\varphi \in \mathcal{C}^3_b (\mathbb{R}^d ;
  \mathbb{R})$ and for any $s, u \in
  [0, t]$ define
  \[ \Xi_{s, u}^{\varphi} \assign \langle \mu_s, (\Gamma_s)_{\kappa} \varphi
     \rangle \delta Y^{\kappa}_{s, u} + \langle \mu_s, ((\Gamma_s)_\kappa 
     (\Gamma_s)_{\lambda} + (\Gamma'_s)_{\lambda \kappa}) \varphi \rangle
     \mathbb{Y}^{\kappa \lambda}_{s, u} . \]
  Recalling Chen's relation for rough paths, for any $s, u, v \in [0, t]$ we
  have
  \begin{align*}
    \Xi_{s, v}^{\varphi} - \Xi_{s, u}^{\varphi} - \Xi_{u, v}^{\varphi} & =
    (\langle \mu_s, (\Gamma_s)_{\kappa} \varphi \rangle - \langle \mu_u,
    (\Gamma_u)_{\kappa} \varphi \rangle) \delta Y^{\kappa}_{u, v} \\
    & \quad + \langle (\mu_s, (\Gamma_s)_{\kappa}  (\Gamma_s)_\lambda +
    (\Gamma'_s)_{\lambda \kappa}) \varphi \rangle  (\mathbb{Y}^{\kappa
    \lambda}_{u, v} + \delta Y^{\kappa}_{s, u} \delta Y^{\lambda}_{u, v}) \\
    & \quad - \langle \mu_u, ((\Gamma_u)_{\kappa}  (\Gamma_u)_{\lambda} +
    (\Gamma'_u)_{\lambda \kappa}) \varphi \rangle \mathbb{Y}^{\kappa
    \lambda}_{u, v} .
  \end{align*}
  From condition (ii) of we deduce that $| \Xi_{s, v}^{\varphi} - \Xi_{s,
  u}^{\varphi} - \Xi_{u, v}^{\varphi} | \le C_{\varphi}  | \BY
  |_{\alpha} |t - s|^{3 \alpha}$ with $C_{\varphi}$ uniform over bounded sets
  of $\varphi \in \mathcal{C}^3_b (\mathbb{R}^d ; \mathbb{R})$. 
\end{proof}

\begin{theorem}[Existence] \label{thm:existence}
  Under \cref{assum:E}, the path  $[0, T] \ni t \mapsto \mu_t^{\mathbf{Y}} \in \mathcal{M}_F(\mathbb{R}^{d_X})$ defined by \eqref{eq:roughunnormfilter}  is a measure-valued solution to the rough Zakai equation \eqref{eq:roughZakai_CITE} starting from $\mu^\BY_0 = \text{Law}(X_0^\BY)$.
  Moreover, for any $R>0$ and for any $\theta \in (0,1]$, \begin{equation} \label{eq:condition1}
      \sup_{\substack{\phi \in \mathcal{C}^\theta_b (\R^{d_X};\R) : \\ |\phi|_{\mathcal{C}^{\theta}_b} \le R}} \ \sup_{0 \leqslant s < t \leqslant T} \frac{|\mu^{\BY}_t (\phi) - \mu^{\BY}_s (\phi)|}{|t-s|^{\theta\alpha}} < + \infty.
  \end{equation}    
\end{theorem}

\begin{proof}
    The proof is essentially an application of the rough stochastic It\^o formula \cref{eq:roughItoformula_roughItoprocess} (part 1 and 2). 
    Let 
    $\varphi \in \mathcal{C}^3_b(\mathbb{R}^{d_X};\mathbb{R})$ be arbitrary.

    For sake of simplicity, let us first consider the case $h = 0$ (i.e.\ $Z^{\BY}_t = I$ for any $t$).
    Notice that, by definition of $L_{2,\infty}$-integrable solutions (e.g. \cref{def:integrablesolutionsRSDEs}), $X^{\BY}$ a.s.-continuous and $\mu^{\BY}$ is therefore weakly continuous.
    By construction, the map $[0,T] \ni t \mapsto \mu^{\BY}_t(A^\BY_t \varphi) = \mathbb{E}(A^\BY_t \varphi (X_t^{\BY}) )$ 
    is measurable and uniformly bounded. That is, condition \textit{(i)}\ of \cref{def:solutionroughZakai} is satisfied. 
    Recall that $(X^{\BY}_\cdot, f(\cdot \, ,X^{\BY};\BY))$ is a stochastic controlled rough path in $\mathbf{D}_Y^{2\alpha} L_p$ for any $p \in [2,\infty)$. 
    Hence, for any $\kappa,\lambda=1,\dots,d_Y$ and for any $0 \le s \le t \le T$, we can write 
    \begin{align*}
        &\sum_{i,j=1}^{d_X} ( \partial^2_{ij} \varphi(X^{\BY}_t) f^i_\kappa(t,X^{\BY}_t;\BY) f^j_\lambda(t,X^{\BY}_t;\BY) + \partial_i \varphi(X^{\BY}_t) \partial_{x^j} f_\kappa^i(t,X^{\BY}_t;\BY) f_\lambda^j(t,X^{\BY}_t;\BY)   \\
        & \quad  + \sum_{i=1}^{d_X} \partial_i \varphi(X^{\BY}_t) (f'_{\kappa\lambda})^i(t,X^{\BY}_t;\BY)  \\
        & = \sum_{i,j=1}^{d_X} ( \partial^2_{ij} \varphi(X^{\BY}_s) f^i_\kappa(s,X^{\BY}_s;\BY) f^j_\lambda(s,X^{\BY}_s;\BY) + \partial_i \varphi(X^{\BY}_s) \partial_{x^j} f_\kappa^i(s,X^{\BY}_s;\BY) f_\lambda^j(s,X^{\BY}_s;\BY)    \\
        & \quad  + \sum_{i=1}^{d_X} \partial_i \varphi(X^{\BY}_s) (f'_{\kappa\lambda})^i(s,X^{\BY}_s;\BY) + Q^\varphi_{s,t}
    \end{align*} and 
    \begin{align*}
        & \sum_{i=1}^{d_X} (\partial_i \varphi (X^{\BY}_t) f_\kappa^i (t,X_t^{\BY};\BY) - \partial_i \varphi (X^{\BY}_s) f_\kappa^i (s,X_s^{\BY};\BY))   \\
        & = \sum_{i=1}^{d_X} (\partial_i \varphi (X^{\BY}_t) - \partial_i \varphi (X^{\BY}_s)) f_\kappa^i (s,X_s^{\BY};\BY) + \sum_{i=1}^{d_X} \partial_i \varphi (X^{\BY}_s) (f_\kappa^i (t,X_t^{\BY};\BY) - f_\kappa^i (s,X_s^{\BY};\BY))  \\
        & = \sum_{i,j=1}^{d_X} \partial^2_{ij} \varphi (X^{\BY}_s) \Big( \sum_{\eta=1}^{d_Y} f_\eta^j(s,X^{\BY}_s;\BY) \delta Y^\eta_{s,t} \Big) f_\kappa^i (s,X_s^{\BY};\BY) \\
        &  \quad + \sum_{i=1}^{d_X} \partial_i \varphi (X^{\BY}_s) \sum_{\eta=1}^{d_Y} \Big( \sum_{j=1}^{d_X} \partial_{x^j} f_\kappa^i (s,X_s^{\BY};\BY) f_\eta^j(s,X^{\BY}_s;\BY) + (f'_{\kappa \eta})^i(s,X^{\BY}_s;\BY) \Big) \delta Y^\eta_{s,t} + R^{\varphi}_{s,t}
    \end{align*}
    with $| \E(Q^\varphi_{s,t})| \le \|Q^\varphi_{s,t}\|_2 \le c_\varphi |t-s|^{\alpha}$ and $ |\E( R^\varphi_{s,t})| \le  \| \mathbb{E}( R^\varphi_{s,t} \mid \mathcal{F}_s ) \|_2 \le c_\varphi |t-s|^{2\alpha}$ for some constant $c_\varphi>0$, which is uniform over bounded sets in $\mathcal{C}^3_b(\R^{d_X};\R)$. 
    Taking expectation on both sides of the previous two identities, we have that condition \textit{(ii)} of \cref{def:solutionroughZakai} is satisfied. 
    We can apply the rough stochastic It\^o formula to the process $X^\BY$ in \eqref{eq:roughSDE_filtering} and we deduce that, for any $0 \le s \le t \le T$,  
    \begin{equation} \label{eq:roughItoformula_existence} \begin{split}
        &\varphi(X^{\BY}_t) - \varphi(X^{\BY}_s)  \\
      &=  \int_s^t \sum_{i=1}^{d_X} \partial_{i} \varphi(X^{\BY}_r) b^i (r,X_r^{\BY};\BY)  \\
      & \quad + \frac{1}{2} \sum_{i,j=1}^{d_X} \partial^2_{ij} \varphi (X^{\BY}_r) \Big( \sum_{\theta=1}^{d_B} \sigma_\theta^i(r,X_r^{\BY};\BY) \sigma_\theta^j(r,X_r^{\BY};\BY) + \sum_{\kappa, \lambda = 1}^{d_Y} f^i_\kappa(r,X_r^\BY;\BY)  f^j_\lambda(r,X_r^\BY;\BY) \dot{[\BY]}_r^{\kappa \lambda} \Big)  \, dr \\
      & \quad +  \sum_{\theta=1}^{d_B} \int_s^t  \sum_{i=1}^{d_X} \partial_{i} \varphi(X^{\BY}_r) \sigma_\theta^i (r,X_r^{\BY};\BY) \, dB^{\theta}_r +
      \sum_{\kappa=1}^{d_Y} \sum_{i=1}^{d_X}  \partial_{i} \varphi(X^{\BY}_s) f^i_\kappa(s,X_s^{\BY};\BY) \delta Y_{s,t}^\kappa \\
      & \quad + \sum_{\kappa, \lambda=1}^{d_Y} \sum_{i=1}^{d_X} \Big( \sum_{j=1}^{d_X} ( \partial^2_{ij} \varphi (X^{\BY}_s) f_\lambda^i(s,X_s^{\BY};\BY) f_\kappa^j(s,X_s^{\BY};\BY)  \\
      & \quad + \partial_i \varphi(X_s^{\BY})  \partial_{x^j} f_\lambda^i(s,X_s^{\BY};\BY) f_\kappa^j(s,X_s^{\BY};\BY)  ) +  \partial_i \varphi(X_s^{\BY}) (f_{\lambda\kappa}')^i (s,X_s^{\BY};\BY)  \Big)  \mathbb{Y}_{s,t}^{\kappa \lambda} + \varphi^\natural_{s,t}
    \end{split}
    \end{equation}
  with $|\mathbb{E}(\varphi^\natural_{s,t})| \le \|\mathbb{E}(\varphi^\natural_{s,t} \mid \mathcal{F}_s)\|_2 \le c_\varphi |t-s|^{3\alpha}$. 
  Hence, by taking the expectation on both sides of \eqref{eq:roughItoformula_existence} and by the martingale property of the It\^o integral, we conclude that condition \textit{(iii)} of \cref{def:solutionroughZakai} is also satisfied.

  The proof when $h$ is not zero follows as a generalization of the previous argument.
  Indeed, recall that by construction $(Z^\BY, Z^\BY h(\cdot \, , X^\BY; \BY)) \in \bigcap_{p \in [1,\infty)} \mathbf{D}_Y^{2\alpha} L_p(\mathbb{R}) $. 
  By the product formula (cf.\ \cref{eq:roughItoformula_roughItoprocess} part 2 and 3), we conclude that, for any $0 \le s \le t \le T$, 
  \begin{align}
   & \varphi (X_t^{\BY}) Z^\BY_t  -  \varphi (X_s^{\BY}) Z_s^\BY \notag \\
    &   = \int_s^t A_r^{\BY} \varphi 
    (X_r^{\BY}) Z^\BY_r 
    \, dr   + \sum_{\theta=1}^{d_B} \int_s^t \sum_{j=1}^{d_X} \partial_j \varphi    (X_r^{\BY}) \sigma_\theta^j (r,X_r^{\BY};\BY) Z^\BY_r \, dB^\theta_r    \label{e:2nd} \\
    &  \quad  + \sum_{\kappa=1}^{d_Y} (\Gamma_s^{\BY})_\kappa \varphi     (X_s^{\BY}) Z^\BY_s \delta Y^\kappa_{s,t}  + \sum_{\kappa, \lambda=1}^{d_Y} \Big( (\Gamma^{\BY}_s)_\kappa (\Gamma^{\BY}_s)_\lambda \varphi  (X_s^{\BY})    +    (\Gamma^{\BY}_s)'_{\lambda \kappa} \varphi  (X_s^{\BY})    \Big) Z^\BY_s  \mathbb{Y}_{s,t}^{\kappa \lambda} + \varphi^\natural_{s,t} \notag
  \end{align}
  with $|\mathbb{E} (\varphi^\natural_{s,t})| \le \| \E(\varphi^\natural_{s,t} \mid \mathcal{F}_s)\|_2 \le {c_\varphi} |t-s|^{3\alpha}$ for some constant $c_\varphi>0$, which is uniform over bounded sets in $\mathcal{C}^3_b(\R^{d_X};\R)$. 
  The conclusion analogously follows by taking expectation on both sides, recalling that $\mu^{\BY}_t(\varphi) = \mathbb{E} (\varphi(X^{\BY}_t) {Z_t^{\BY}} )$.

  Finally, it is almost immediate to see that, for any $\theta \in (0,1]$ and for any $\phi \in \mathcal{C}^\theta_b(\R^{d_X};\R)$, the map $[0,T] \ni t \mapsto \phi(X_t^{\BY}) {Z_t^{\BY}} \in L_1(\Omega)$ is $(\theta\alpha)$-H\"older continuous. 
  It follows that, up to constants not depending on $\phi$, \begin{equation*}
      |\mu^{\BY}_t(\phi) - \mu^{\BY}_s(\phi)| \lesssim |\phi|_{\mathcal{C}^\theta_b}  |t-s|^{\theta\alpha},
  \end{equation*}  
  which yields \eqref{eq:condition1}.
\end{proof}

\subsection{Uniqueness} \label{section:Zakai_uniqueness}
\begin{definition} \label{def:solutionbackwardroughPDE}
    A function-valued solution to the backward equation \begin{equation} \label{eq:dualroughequation}
        u_t (x) = u_T(x) +\int_t^T \left( A_r^{\BY} u_r (x) - (\Gamma_r^\BY)'u_r(x) \dot{[\BY]}_r \right)  dr + \int_t^T \Gamma^{\BY}_r u_r(x) \, d\BY_r 
    \end{equation}
    on the time interval $[0,T]$ is a map $u: [0, T] \times \mathbb{R}^{d_X}  \to \mathbb{R}$ satisfying the following properties: 
  \begin{enumerate}[(i)]
    \item
    for any $x \in \mathbb{R}^{d_X}$, the function $t \mapsto A_t^{\BY} u_t(x) - \sum_{\kappa=1}^{d_Y} (\Gamma_t^\BY)'_{\lambda \kappa} u_t(x) \dot{[\BY]}_t^{\kappa\lambda}$ 
    belongs to $L_1([0,T],dt)$;
    \item
    there is a constant $C > 0$ such that, for any $0 \le s \le t \le T$ and for any $\kappa,\lambda=1,\dots,d_Y$,
    \begin{align*}
      \sup_{x \in \mathbb{R}^d} \left| (\Gamma^{\BY}_s)_\kappa
      (\Gamma^{\BY}_s)_\lambda u_s (x) - (\Gamma^{\BY}_s )'_{\kappa \lambda} u_s (x) 
      - \big((\Gamma^{\BY}_t)_\kappa (\Gamma^{\BY}_t)_\lambda u_t(x) -
      (\Gamma^{\BY}_t )_{\kappa \lambda}' u_t (x) \big) \right| & \le C | t
      - s |^{\alpha}\\
      \sup_{x \in \mathbb{R}^d} | (\Gamma^{\BY}_s)_\kappa u_s (x) -
      (\Gamma^{\BY}_t)_\kappa u_t (x) -  \sum_{\eta=1}^{d_Y} \big( (\Gamma^{\BY}_t)_\kappa 
      (\Gamma^{\BY}_t)_\eta u_t(x) - (\Gamma^{\BY}_t )'_{\kappa \eta} u_t (x) \big) \delta Y_{s,t}^\eta     | & \le C | t - s |^{2 \alpha}
    \end{align*}
    \item
    for any $0 \le s \le t \le T$ and for any $x \in \mathbb{R}^{d_X}$, the following Davie-type expansion holds \begin{multline*} \label{eq:Davieexpansion_roughKolmogorov}
          u_s(x) - u_t(x) = \int_s^t \Big( A^{\BY}_r u_r(x) - \sum_{\kappa,\lambda = 1}^{d_Y} (\Gamma^\BY_r)'_{\lambda\kappa} u_r(x) \dot{[\BY]}_r^{\kappa\lambda} 
          \Big) \, d r \\ + \sum_{\kappa=1}^{d_Y} (\Gamma^{\BY}_t)_\kappa u_t(x) \delta Y^\kappa_{s,t}  + \sum_{\kappa, \lambda =1}^{d_Y}  \big( (\Gamma^{\BY}_t)_\kappa (\Gamma^{\BY}_t)_\lambda u_t (x) - (\Gamma^{\BY}_t )'_{\kappa \lambda} u_t (x) \big)    \mathbb{Y}_{s,t}^{\kappa \lambda}+ u^\natural_{s,t}(x)
      \end{multline*}
      with $\sup_{x \in \mathbb{R}^{d_X}} \frac{|u^\natural_{s,t}(x)|}{|t-s|^{3\alpha}} < +\infty$.
  \end{enumerate}
  When the terminal condition $u_T(\cdot) = g: \mathbb{R}^{d_X} \to \mathbb{R}$ is specified, we say that $u$ is a solution with terminal datum $g$. 
\end{definition}
This kind of backward rough PDEs has been studied in \cite[Section 4]{BFS24}, where the unique solution is shown to admit a Feynman--Kac type representation under the following  \cref{assumptionU} (which also implies \cref{assum:E}).

\begin{assumption}{U} \label{assumptionU}
Let $\theta \in [4,5]$ such that $2\alpha + (\theta-4)\alpha >1$. 
Let the following hold: \begin{itemize}
    \item $ [t \mapsto b (t, \cdot \, ; \BY)] \in \mathcal{B}_b([0,T];\mathcal{C}^3_b(\R^{d_X};\R^{d_X}))$ and $$t \mapsto |b(t,\cdot \, ; \BY)|_\infty + |D_x b(t,\cdot \, ; \BY)|_\infty + |D^2_{xx} b(t,\cdot \, ; \BY)|_\infty \quad \text{is continuous};$$ 
    \item $[ t \mapsto \sigma(t,\cdot \, ; \BY)]  \in \mathcal{B}_b([0,T];\mathcal{C}^3_b(\R^{d_X};\mathrm{Lin}(\mathbb{R}^{d_B}, \mathbb{R}^{d_X}))) $ and $$ t \mapsto |\sigma(t,\cdot \, ; \BY)|_\infty + |D_x \sigma(t,\cdot \, ; \BY)|_\infty + |D^2_{xx} \sigma
    (t,\cdot \, ; \BY)|_\infty \quad \text{is continuous}; $$
    \item 
    $[t \mapsto (f (t, \cdot \, ;\BY),  f' (t, \cdot \, ; \BY))] \in \mathscr{D}_{Y}^{2 \alpha} \mathcal{C}_b^\theta$  
    in the sense of \cref{def:stochasticcontrolledvectorfields_intro} ; 
    \item 
    $[t \mapsto (h(t,\cdot \, ;\BY), h' (t,\cdot \, ; \BY))] \in \mathscr{D}_{Y}^{2 \alpha} \mathcal{C}_b^\theta$ .
\end{itemize}
\end{assumption}

The following result follows from \cite[Proposition 4.8 and Theorem 4.9]{BFS24}. For a more detailed discussion see \cref{appendix}. 

\begin{theorem} 
    For any final datum $ g \in \mathcal{C}^3_b(\R^{d_X};\R)$, there is a unique function-valued solution $u$ to equation \eqref{eq:dualroughequation} such that \footnote{
    We denote by $C^{0, 2} ([0, T] \times \mathbb{R}^{d_X} ; \mathbb{R})$ the space of functions $u=u(t,x)$ which are twice continuously differentiable in $x$ and such that $u,D_x u, D^2_{xx}$ are jointly continuous in $(t,x)$. 
    } 
    \begin{equation*}
    u \in C^{0, 2} ([0, T] \times \mathbb{R}^{d_X} ; \mathbb{R}) \cap \mathcal{B}_b([0,T]; \mathcal{C}^3_b (\mathbb{R}^{d_X};\mathbb{R}))
\end{equation*}
and the map $t \mapsto (|u(t,\cdot)|_\infty + |D_xu(t,\cdot)|_\infty + |D^2_{xx}u(t,\cdot)|_\infty)$ is $\alpha$-H\"older continuous. 
\end{theorem}

\begin{theorem}[Uniqueness] \label{thm:uniqueness}
  Under \cref{assumptionU}  and for any $\nu \in \mathcal{M}_F (\mathbb{R}^{d_X})$,  there is at most one solution $\mu$ to \eqref{eq:roughZakai_CITE} starting from $\mu_0=\nu$ such that 
  the following holds, for any $R>0$:
  \begin{equation} \label{eq:condition_uniqueness}
  \sup_{\substack{\phi \in \mathcal{C}^1_b(\R^{d_X};\R) : \\ |\phi|_{\mathcal{C}^1_b} \le R}} \ \sup_{0 \le s < t \le T} \frac{|\mu_t (\phi) - \mu_s (\phi)|}{|t-s|^{\alpha}} < + \infty .
  \end{equation}
\end{theorem}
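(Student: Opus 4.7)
The plan is to run the classical duality/adjoint argument adapted to the rough setting. Let $\mu^1, \mu^2$ be two solutions to \eqref{eq:roughZakai_CITE} with the same initial datum $\nu$ and satisfying \eqref{eq:condition_uniqueness}, and set $\mu \assign \mu^1 - \mu^2$, which is a signed-measure-valued solution starting from $0$ and still enjoys \eqref{eq:condition_uniqueness}. For an arbitrary terminal datum $g \in \mathcal{C}^3_b(\R^{d_X};\R)$ and any $T^\star \in [0,T]$, I invoke the solvability result stated right before the theorem (\cite[Proposition 4.7, Theorem 4.8]{BFS24}, see also \cref{appendix}) to produce the unique function-valued solution $u = u^{g,T^\star}$ on $[0,T^\star]$ to the backward rough PDE \eqref{eq:dualroughequation} with $u_{T^\star} = g$, enjoying $u \in C^{0,2}_b \cap \mathcal{C}^\alpha_t C^2_{b,x} \cap b\mathcal{B}_t \mathcal{C}^3_{b,x}$. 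The goal is to show that the pairing $t \mapsto \mu_t(u_t)$ is constant on $[0,T^\star]$, which then gives $\mu_{T^\star}(g) = \mu_0(u_0) = 0$ and, since $\mathcal{C}^3_b$ is measure-determining on $\mathcal{M}_F(\R^{d_X})$, yields $\mu_{T^\star} = 0$.

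To show $\mu_t(u_t)$ is constant, I would run a sewing/Davie argument on the increment
\begin{equation*}
  \mu_t(u_t) - \mu_s(u_s) = (\delta \mu_{s,t})(u_s) + \mu_s(\delta u_{s,t}) + (\delta \mu_{s,t})(\delta u_{s,t}).
\end{equation*}
Substituting the Davie expansion of $\mu$ at the frozen test function $\varphi = u_s \in \mathcal{C}^3_b$ (condition (iii) of \cref{def:solutionroughZakai}) and the Davie expansion of $u$ at $x$ (condition (iii) of \cref{def:solutionbackwardroughPDE}), the $\delta Y$-contributions cancel immediately after applying condition (ii) of the backward expansion to replace $(\Gamma^\BY_t)_\kappa u_t$ by $(\Gamma^\BY_s)_\kappa u_s$ up to an $O(|t-s|^{2\alpha})$-remainder. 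The leading $\mathbb{Y}$-contributions involve the symmetric combination $\mathbb{Y}^{\kappa\lambda}_{s,t} + \mathbb{Y}^{\lambda\kappa}_{s,t} = \delta Y^\kappa_{s,t}\delta Y^\lambda_{s,t} - \delta[\BY]^{\kappa\lambda}_{s,t}$, producing a $\delta Y\otimes \delta Y$ piece and a bracket piece; the bracket piece cancels exactly the $(\Gamma^\BY_r)' u_r \dot{[\BY]}_r$ correction in the drift of the backward equation (this is precisely why the correction is there). The cross term $(\delta \mu_{s,t})(\delta u_{s,t})$ is expanded by inserting the leading $-\sum_\kappa (\Gamma^\BY_s)_\kappa u_s \,\delta Y^\kappa_{s,t}$ part of $\delta u_{s,t}$ and applying the first-order Davie expansion of $\mu$ to $(\Gamma^\BY_s)_\kappa u_s \in \mathcal{C}^2_b$; this produces the remaining $\delta Y\otimes \delta Y$ contribution needed to cancel the leftover piece from $(\delta\mu_{s,t})(u_s) + \mu_s(\delta u_{s,t})$. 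The remainder in the cross term is handled by estimating $(\delta\mu_{s,t})(\text{rem})$ using \eqref{eq:condition_uniqueness} with $\phi = \text{rem} \in \mathcal{C}^1_b$ (of $\mathcal{C}^1_b$-size $O(|t-s|^{2\alpha})$), giving $O(|t-s|^{3\alpha})$; this is where the hypothesis \eqref{eq:condition_uniqueness} is essential.

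Having established $|\mu_t(u_t) - \mu_s(u_s)| = o(|t-s|)$ uniformly, a standard additivity/sewing argument (sum over a partition, pass to the mesh $\to 0$) gives $\mu_{T^\star}(u_{T^\star}) - \mu_0(u_0) = 0$. Since $u_{T^\star} = g$ and $\mu_0 = 0$, this is $\mu_{T^\star}(g) = 0$. Varying $g \in \mathcal{C}^3_b$ and $T^\star \in [0,T]$ gives $\mu^1_t = \mu^2_t$ for all $t \in [0,T]$.

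The main obstacle is the bookkeeping of the Davie expansions in the cross term, specifically verifying that the $\delta Y \otimes \delta Y$ contribution from $(\delta\mu_{s,t})(\delta u_{s,t})$ combines with the $\mathbb{Y}$-contributions from the two PDE expansions (after use of $\mathbb{Y}^{\kappa\lambda} + \mathbb{Y}^{\lambda\kappa} = \delta Y^\kappa \delta Y^\lambda - \delta[\BY]^{\kappa\lambda}$) to produce a symmetric cancellation involving the operators $(\Gamma^\BY)_\kappa(\Gamma^\BY)_\lambda$ and $(\Gamma^\BY)'_{\kappa\lambda}$ on both forward and backward sides. Equivalently, one may formulate this as a ``rough product formula'' $d(\mu_t(u_t)) = 0$ in the spirit of the product formula \cite[Proposition 4.9]{BFS24}, where the deliberately asymmetric structure of the backward drift $(\Gamma^\BY)'_{\lambda\kappa}\dot{[\BY]}^{\kappa\lambda}$ is precisely what is needed to cancel the Stratonovich-to-It\^o correction arising from pairing the two rough integrals.
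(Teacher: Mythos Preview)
Your overall strategy is exactly the paper's: pair $\mu$ with the backward solution $u$ and show $t\mapsto\mu_t(u_t)$ has increments $o(|t-s|)$ via the three-term decomposition $(\delta\mu)(u_s)+\mu_s(\delta u)+(\delta\mu)(\delta u)$, then conclude by varying the terminal datum. The cancellation mechanism you describe (bracket correction from $\mathbb{Y}^{\kappa\lambda}+\mathbb{Y}^{\lambda\kappa}=\delta Y^\kappa\delta Y^\lambda-\delta[\BY]^{\kappa\lambda}$ absorbing the $(\Gamma^\BY)'\dot{[\BY]}$ drift) is precisely what happens.

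There is, however, a subtle order-of-operations issue in your treatment of the cross term. You propose to split $\delta u_{s,t}=-\sum_\kappa(\Gamma^\BY_s)_\kappa u_s\,\delta Y^\kappa+\text{rem}$ and then (a) apply the Davie expansion of $\mu$ to $(\Gamma^\BY_s)_\kappa u_s$, and (b) bound $(\delta\mu)(\text{rem})$ by \eqref{eq:condition_uniqueness}. Step (a) is delicate because $(\Gamma^\BY_s)_\kappa u_s$ is only $\mathcal{C}^2_b$, whereas \cref{def:solutionroughZakai} provides the expansion only against $\mathcal{C}^3_b$ test functions. Step (b) needs $\text{rem}$ to be $\mathcal{C}^1_b$-small of order $|t-s|^{2\alpha}$, but the backward remainder $u^\natural$ is only controlled in sup-norm by \cref{def:solutionbackwardroughPDE}; a $C^1_b$-Davie expansion for $u$ (equivalently for $D_xu$) is not part of the stated regularity $u\in\mathcal{C}^\alpha_tC^2_{b,x}\cap b\mathcal{B}_t\mathcal{C}^3_{b,x}$.

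The paper sidesteps both issues by reversing the order: it applies the forward Davie expansion of $\mu$ directly to the test function $u_t-u_s$, which \emph{is} in $\mathcal{C}^3_b$ (with uniformly bounded norm), so that $\mu^{u_t-u_s,\natural}_{s,t}=O(|t-s|^{3\alpha})$ comes for free, and the remaining terms are handled by $|u_t-u_s|_{C^2_b}\lesssim|t-s|^\alpha$. The leading term $\mu_s((\Gamma^\BY_s)_\kappa(u_t-u_s))\delta Y^\kappa$ is then rewritten algebraically using condition~(ii) of \cref{def:solutionbackwardroughPDE}. Correspondingly, the place where \eqref{eq:condition_uniqueness} is actually needed is not the rough cross term but the \emph{drift} comparison $\int_s^t(\mu_r-\mu_s)(A^\BY_r u_r)\,dr$, where $A^\BY_r u_r\in\mathcal{C}^1_b$ uniformly because $A^\BY$ is second order and $u_r\in\mathcal{C}^3_b$.
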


\begin{proof}
Let $\mu^1, \mu^2$ be two solutions of \eqref{eq:roughZakai_CITE}, both satisfying \eqref{eq:condition_uniqueness}.
  Let $\tau \in [0, T]$ and $\varphi \in
  C_c^{\infty} (\mathbb{R}^{d_X} ; \mathbb{R})$ be fixed but arbitrary; it is sufficient to show that $\mu_{\tau}^1 (\varphi ) = \mu_{\tau}^2 (\varphi)$.
  We consider the solution $u = u_t (x) : [0, \tau] \times \mathbb{R}^{d_X} \rightarrow \mathbb{R}$ to the backward equation \eqref{eq:dualroughequation} on $[0,\tau]$ with terminal datum 
  $\varphi$, in the sense of \cref{def:solutionbackwardroughPDE}.
  The conclusion follows if we show that the map $t \mapsto \mu^i_t (u_t)$ is
  constant for $i = 1, 2$.
  Indeed, we would have
  \[ \mu^1_{\tau} (\varphi) = \mu^1_{\tau} (u_{\tau}) = \mu_0^1 (u_0) = \nu(u_0) =
     \mu_0^2 (u_0) = \mu^2_{\tau} (u_{\tau}) = \mu^2_{\tau} (\varphi) . \]
  Let us show that $[0,\tau] \ni t \mapsto \mu^i_t (u_t)$ is $(1+\varepsilon)$-H\"older continuous for $i=1,2$ and for some $\varepsilon >0$.
  For $\mu \in \{\mu^1, \mu^2\}$ and for any $0 \le s \le t \le T$ we can write \begin{equation} \label{eq:decomposition_uniqueness}
       \mu_t (u_t) - \mu_s (u_s) =  - \mu_s (u_s - u_t) + (\mu_t - \mu_s) (u_s) + (\mu_t - \mu_s) (u_t - u_s) .
  \end{equation}
  From the fact that $u$ solves \eqref{eq:dualroughequation} we have
  \begin{multline*}
    - \mu_s(u_s - u_t) = - \mu_s \left( \int_s^t (A^{\BY}_r u_r - \sum_{\kappa,\lambda=1}^{d_Y} (\Gamma_r^\BY)_{\kappa \lambda}' u_r \dot{[\BY]}_r^{\kappa \lambda} 
    )d r 
    \right) \\ - \mu_s \left( \sum_{\kappa=1}^{d_Y} (\Gamma^{\BY}_t)_\kappa u_t \delta Y^\kappa_{s,t} \right) 
     - \mu_s \left( \sum_{\kappa, \lambda=1}^{d_Y} \Big((\Gamma^{\BY}_t)_\kappa (\Gamma^{\BY}_t)_\lambda u_t -  (\Gamma^{\BY}_t)'_{\kappa \lambda} u_t \Big) \mathbb{Y}_{s,t}^{\kappa \lambda} \right) + \mu_s (u^{\natural}_{s, t}) 
  \end{multline*}
  with $| \mu_s (u^{\natural}_{s, t}) | \leqslant \sup_{r \in [0, \tau]} |  \mu_r (1)| \sup_{x \in \R^{d_X}} | u^{\natural}_{s, t}(x)| \lesssim | t - s |^{3 \alpha}$ uniformly in $s,t$. 
  From the fact that $\mu$ solves \eqref{eq:roughZakai_CITE} and that $\sup_{t \in [0,\tau]} |u_t|_{\mathcal{C}^3_b} < +\infty$ it follows that 
  \begin{multline*}
    (\mu_t - \mu_s) (u_s) = \mu_t (u_s) - \mu_s (u_s)
    = \int_s^t \mu_r (A^{\BY}_r u_s)
     \,  d r  \\
    + \sum_{\kappa=1}^{d_Y} \mu_s
    ((\Gamma^{\BY}_s)_\kappa u_s) \delta Y^\kappa_{s, t} + \sum_{\kappa, \lambda =1}^{d_Y}  \mu_s
    ((\Gamma^{\BY}_s)_\kappa (\Gamma^{\BY}_s)_\lambda u_s +
    (\Gamma^{\BY}_s )'_{\lambda \kappa} u_s) \mathbb{Y}_{s,t}^{\kappa \lambda} + \mu_{s, t}^{u_s,\natural}
  \end{multline*}
  where $| \mu_{s, t}^{ u_s, \natural} | \lesssim |t - s|^{3 \alpha}$. 
  Recall that, being $u$ solution to \eqref{eq:dualroughequation}, \begin{equation*}
      \mu_s((\Gamma^{\BY}_s)_\kappa (\Gamma^{\BY}_s)_\lambda u_s +
    (\Gamma^{\BY}_s )'_{\lambda \kappa} u_s) = \mu_s
    ((\Gamma^{\BY}_t)_\kappa (\Gamma^{\BY}_t)_\lambda u_t +
    (\Gamma^{\BY}_t )'_{\lambda \kappa} u_t) + P_{s,t}
  \end{equation*}
  with $|P_{s,t}|\lesssim |t-s|^{\alpha}$. 
  For the last term on the right-hand side of \eqref{eq:decomposition_uniqueness}, recalling that 
  $(|u_t|_\infty + |D_x u_t|_\infty + |D^2_{xx} u_t|_\infty) \in \mathcal{C}^\alpha([0,T];\R)$
  we write \begin{equation*}
      \begin{aligned}
        (\mu_t - \mu_s) (u_t - u_s) 
      & = \sum_{\kappa=1}^{d_Y} \mu_s ((\Gamma^{\BY}_s)_\kappa (u_t-u_s)) \delta Y^\kappa_{s,t} + Q_{s,t} \\
      &= \sum_{\kappa=1}^{d_Y} \mu_s ((\Gamma^{\BY}_s)_\kappa u_t - (\Gamma^{\BY}_t)_\kappa u_t) \delta Y^\kappa_{s,t} + \sum_{\kappa=1}^{d_Y} \mu_s ((\Gamma^{\BY}_t)_\kappa u_t - (\Gamma^{\BY}_s)_\kappa u_s ) \delta Y^\kappa_{s,t} + Q_{s,t}
      \end{aligned}
  \end{equation*}
  where, by construction, $|Q_{s,t}| \lesssim |t-s|^{3\alpha}$ uniformly in $s,t$. 
  Recalling the definition of the operator $\Gamma^{\BY}$, it is clear that under \cref{assumptionU} \begin{equation*}
      (\Gamma^{\BY}_s)_\kappa u_t(x) - (\Gamma^{\BY}_t)_\kappa u_t (x) = - \sum_{\lambda=1}^{d_Y} (\Gamma^{\BY}_t)'_{\kappa \lambda} u_t(x) \delta Y_{s,t}^{\lambda} + R_{s,t}(x)
  \end{equation*}
  with $\sup_{x \in \mathbb{R}^{d_X}} |R_{s,t}(x)| \lesssim |t-s|^{2\alpha}$ uniformly in $s,t$.
  Hence, we can write 
  \begin{align*}
      &\mu_s (u_t - u_s) + (\mu_t - \mu_s) (u_s) + (\mu_t - \mu_s) (u_t - u_s) \\
      &= \int_s^t  \mu_r (A^{\BY}_r u_s) 
      \,  d r 
      - \mu_s \left( \int_s^t A^{\BY}_r u_r 
      - \sum_{\kappa,\lambda=1}^{d_Y} (\Gamma_r^\BY)_{ \kappa \lambda}' u_r \dot{[\BY]}_r^{\kappa \lambda} 
      \, d r \right) \\
      &\quad + \sum_{\kappa,\lambda=1}^{d_X} \mu_s((\Gamma_t^{\BY})'_{\kappa\lambda} u_t)  ( \mathbb{Y}_{s,t}^{\kappa \lambda} + \mathbb{Y}_{s,t}^{\lambda \kappa} - \delta Y^\lambda_{s,t} \delta Y^\kappa_{s,t} ) + S_{s,t}
  \end{align*}
  with $|S_{s,t}|\lesssim |t-s|^{3\alpha}$. 
  The conclusion follows recalling that $$\mathbb{Y}_{s,t}^{\kappa \lambda} + \mathbb{Y}_{s,t}^{\lambda \kappa} - \delta Y_{s,t}^\kappa \delta Y^\lambda_{s,t} = - \delta [\BY]_{s,t}^{\kappa\lambda} = - \int_s^t \dot{[\BY]}^{\kappa\lambda}_r \, dr$$ for any $\kappa, \lambda =1,\dots,d_Y$, from the fact that $x \mapsto A_t^{\BY} u_t(x)$ is 
  Lipschitz continuous and bounded, uniformly in $t$, and noting that under \cref{assumptionU} $$\sup_{x \in \R^{d_X}} |(\Gamma_t^\BY)'_{\kappa\lambda} u_t(x) - (\Gamma_s^\BY)'_{\kappa\lambda} u_s(x)| \lesssim |t-s|^\alpha$$ uniformly in $s,t$.  
  Indeed, we get that \begin{equation*}
      \left|\mu_s \left( \int_s^t \sum_{\kappa,\lambda=1}^{d_Y} (\Gamma_r^\BY)_{ \kappa \lambda}' u_r \dot{[\BY]}_r^{\kappa \lambda}  \, d r \right) - \sum_{\kappa,\lambda=1}^{d_X} \mu_s((\Gamma_t^{\BY})'_{\kappa\lambda} u_t) \int_s^t \dot{[\BY]}_r^{\kappa\lambda} \, dr \right| \lesssim |t-s|^{1+\alpha}
  \end{equation*} and from \eqref{eq:condition_uniqueness} we also obtain \begin{align*}
      \left| \int_s^t \mu_r(A^{\BY}_r u_s) \, dr - \mu_s\left( \int_s^t A^{\BY}_r u_r \, dr \right) \right| &\le \left| \int_s^t \mu_r(A^{\BY}_r (u_s - u_r)) \, dr  \right| + \left| \int_s^t (\mu_r-\mu_s)(A^{\BY}_r u_r) \, dr  \right| \\
      &\lesssim |u|_{\mathcal{C}^{\alpha}} \sup_{r \in [0,\tau]} |\mu_r(1)| |t-s|^{1+\alpha}  + |t-s|^{1+\alpha} . 
  \end{align*}
\end{proof}

\begin{remark} The prototypical examples of $\BY$-dependence in $b,\sigma,f,h$ is ``Markovian'', with $\BY$ replaced by $Y_t = \pi_1 \BY_t$, in which case checking \cref{assumptionU} amounts to calculus. A simple sufficient condition reads $b=b(t,x,y),\sigma \in \mathcal{C}^{0,2,0}_{b}$ and 
$ f,h \in \mathcal{C}_b^{1,5,5}$.
\end{remark}

\subsection{Rough Kushner--Stratonovich equation} \label{sec:RKS}

In view of the Kallianpur--Striebel formula (i.e.\ \cref{thm:KS}), we define the normalized rough filter
$$
\varsigma^{\BY}_t(\varphi):= \frac{\mu^{\BY}_t(\varphi)}{\mu^{\BY}_t(1)}, \qquad t \in [0,T], \, \varphi \in \mathcal{B}_b(\R^{d_X}).
$$
We show that $\varsigma ^{\BY}$ solves the {\em rough
Kushner--Stratonovich equation} given by 
\begin{multline*}
    \varsigma_t^{\BY} (\varphi) = \varsigma_0^{\BY} (\varphi) + \int_0^t \varsigma_r^\BY  (A^{\BY}_r \varphi) \, d r  \\
    + \int_0^t (\varsigma_r^{\BY}  (\Gamma^{\BY}_r \varphi) - \varsigma_r^{\BY} (\varphi)  \varsigma_r^{\BY} (h(r,\cdot \, ;\BY))) (d  \BY_r - \dot{[\BY]}_r  \varsigma_r ({h}(r, \cdot \, ; \BY)^\top) d r) 
\end{multline*}
where $A^{\BY}, \Gamma^{\BY}$ are the time-dependent differential operators defined in \eqref{eq:operatorA^Y}, \eqref{eq:operatorGamma^Y}.  
For sake of a more compact notation, for any $\varsigma:[0,T] \to \mathcal{M}_F(\mathbb{R}^{d_X})$ and for any sufficiently regular test function $\varphi:\mathbb{R}^{d_X} \to \mathbb{R}$, let us define $\Phi^{\varsigma,\varphi}_t = ((\Phi^{\varsigma,\varphi}_t)_1,\dots,(\Phi^{\varsigma,\varphi}_t)_{d_Y})$ as \begin{equation*}
    (\Phi^{\varsigma,\varphi}_t)_\kappa := \varsigma_t((\Gamma^{\BY}_t)_\kappa \varphi) - \varsigma_t(\varphi) \varsigma_t(h_\kappa(t,\cdot \, ; \BY))  \qquad \kappa=1,\dots,d_Y. 
\end{equation*}
The following formal identities \begin{equation} \label{eq:formalidentities_roughKS} \begin{aligned}
    \varsigma_t((\Gamma^{\BY}_t)_\kappa \varphi) - \varsigma_s((\Gamma^{\BY}_s)_\kappa \varphi) &\approx \sum_{\eta=1}^{d_Y} [\varsigma_s((\Gamma_s^{\BY})_\eta (\Gamma_s^{\BY})_\kappa \varphi + (\Gamma_s^{\BY})'_{\kappa \eta} \varphi )  - \varsigma_s((\Gamma_s^{\BY})_\kappa \varphi) \varsigma_s(h_\eta(s,\cdot \, ; \BY))] \delta Y^\eta_{s,t} \\
    & =: \sum_{\eta=1}^{d_Y} \varsigma_s((\Gamma^{\BY}_s)_\kappa \varphi)'_\eta \delta Y^\eta_{s,t} \\
    \varsigma_t (\varphi) - \varsigma_s(\varphi) &\approx \sum_{\eta=1}^{d_Y} [ \varsigma_s((\Gamma^{\BY}_s)_\eta \varphi) - \varsigma_s(\varphi) \varsigma_s(h_\eta(s,\cdot \, ;\BY))  ] \delta Y^\eta_{s,t} \\
    & =: \sum_{\eta=1}^{d_Y} \varsigma_s(\varphi)_\eta' \delta Y^\eta_{s,t}
\end{aligned}
\end{equation}
and the product rule for Gubinelli derivatives justify the definition of \[
(\Phi^{\varsigma,\varphi}_t)' \assign 
\left(
  \begin{array}{cccc}
    (\Phi^{\varsigma,\varphi}_t)_{11}' & \cdots  & (\Phi^{\varsigma,\varphi}_t)'_{1 d_Y} \\
    \vdots & \ddots & \vdots       \\
    (\Phi^{\varsigma,\varphi}_t)_{d_Y 1}' & \cdots & (\Phi^{\varsigma,\varphi}_t)'_{d_Y d_Y}
  \end{array}
\right)
\]
given by 
   \begin{equation*}
    (\Phi^{\varsigma,\varphi}_t)'_{\kappa \lambda} := \varsigma_t((\Gamma^{\BY}_t)_\kappa \varphi)'_\lambda - \varsigma_t(\varphi)_\lambda' \varsigma_t(h_\kappa(t,\cdot \, ; \BY)) - \varsigma_t(\varphi) \varsigma_t((\Gamma^{\BY}_t)_\kappa 1)'_\lambda, \qquad \kappa,\lambda=1,\dots,d_Y .
\end{equation*}
We used the fact that $h_\kappa(t,x;\BY) = (\Gamma^\BY_t)_\kappa 1(x)$.

\begin{definition}[Rough
Kushner--Stratonovich] \label{def:roughKushnerStratonovich}
  A measure-valued solution to
  \begin{multline} \label{eq:roughKS_def}
      \varsigma_t (\varphi) = \varsigma_0 (\varphi) + \int_0^t \varsigma_r     (A^{\mathbf{Y}}_r \varphi) d r  \\+ \int_0^t (\varsigma_r     (\Gamma^{\BY}_r \varphi) - \varsigma_r (\varphi) \varsigma_r
     (h(r,\cdot \, ; \BY))) (d \BY_r - \dot{[\BY]}_r \varsigma_r(h(r,\cdot;\BY)^\top) \, dr)
  \end{multline} 
  is a weakly continuous curve $\varsigma : [0, T] \rightarrow \mathcal{M}_F
  (\mathbb{R}^{d_X})$ such that, for any test function $\varphi \in
  \mathcal{C}^3_b (\mathbb{R}^{d_X} ; \mathbb{R})$, the following are satisfied:
  \begin{enumerate}
    \item the maps $t \mapsto \varsigma_t  (A^{\BY}_t \varphi)$ and $t \mapsto  \sum_{\kappa,\lambda=1}^{d_Y} (\varsigma_t     ((\Gamma^{\BY}_t)_\kappa \varphi) - \varsigma_t (\varphi) \varsigma_t  (h_\kappa(t,\cdot \, ; \BY))) \dot{[\BY]}_t^{\kappa\lambda} \varsigma_t( h_\lambda(t,\cdot \, ;\BY))$ belong to
    $L^1  ([0, T], dt)$; 
    
    \item there is $C = C_{\varphi} > 0$ uniform over bounded sets of
    $\varphi$ in $\mathcal{C}^3_b(\R^{d_X};\R)$ such that, for any $0 \le s \le t
    \le T$ and for any $\kappa, \lambda = 1, \ldots, d_Y$, 

    \begin{align*}
        | (\Phi^{\varsigma,\varphi}_t)'_{\kappa \lambda} - (\Phi^{\varsigma,\varphi}_s)'_{\kappa \lambda} | &\le C_\varphi |t-s|^\alpha \\
        |  (\Phi^{\varsigma,\varphi}_t)_\kappa - (\Phi^{\varsigma,\varphi}_s)_\kappa - \sum_{\eta=1}^{d_Y} (\Phi^{\varsigma,\varphi}_s)'_{\kappa \lambda} \delta Y^\eta_{s,t} | &\le C_\varphi |t-s|^{2\alpha} ;
    \end{align*}
    
    \item the following Davie-type expansion holds for any $0 \le s \le t \le
    T$,
    \begin{multline*}
      \varsigma_t (\varphi) - \varsigma_s (\varphi)  = \int_s^t \varsigma_r (A^{\BY}_r \varphi) - \sum_{\kappa,\lambda=1}^{d_Y} (\varsigma_t     ((\Gamma^{\BY}_t)_\kappa \varphi) - \varsigma_t (\varphi) \varsigma_t  (h_\kappa(t,\cdot \, ; \BY))) \dot{[\BY]}_t^{\kappa\lambda} \varsigma_t( h_\lambda(t,\cdot \, ;\BY))\, dr  \\
       + \sum_{\kappa = 1}^{d_Y} \underbrace{(\varsigma_s ((\Gamma^{\BY}_s)_{\kappa} \varphi) - \varsigma_s (\varphi) \varsigma_s (h_\kappa(s,\cdot \, ; \BY)))}_{= (\Phi^{\varsigma,\varphi}_s)_\kappa} \delta Y^{\kappa}_{s, t}   + \sum_{\kappa, \lambda = 1}^{d_Y} (\Phi^{\varsigma,\varphi}_t)'_{\lambda \kappa } \mathbb{Y}_{s,t}^{\kappa \lambda} + \mu_{s, t}^{\varphi, \natural}
    \end{multline*}

    where $\mu_{s, t}^{\varphi, \natural} = o (| t - s |)$ as $| t - s |
    \rightarrow 0$.
  \end{enumerate}
  If the initial value $\varsigma_0 = \nu \in \mathcal{M}_F (\mathbb{R}^{d_X})$ is
  specified, we say that $\varsigma$ is a solution starting from $\nu$.
\end{definition}

\begin{proposition}
  Let $\varsigma : [0, T] \to \mathcal{M}_F (\R^{d_X})$ satisfying condition (ii) of
  the previous definition. Then the following limit exists for any $t \in [0,
  T]$ along any sequence of partitions $\pi$ of $[0, t]$ whose mesh tends to
  zero:
  \begin{equation*}
    \int_0^t \varsigma_r (\Gamma_r^{\BY} \varphi) - \varsigma_r (\varphi) \varsigma_r (h(r,\cdot \, ;\BY)) d \mathbf{Y}_r \assign  \lim_{| \pi | \to 0}  \sum_{[s, u] \in \pi} \left( \sum_{\kappa=1}^{d_Y} (\Phi^{\varsigma,\varphi}_s)_\kappa \delta Y^{\kappa}_{s, u} + \sum_{\kappa,\lambda=1}^{d_Y} (\Phi^{\varsigma,\varphi}_s)'_{\lambda \kappa} \mathbb{Y}_{s,
    u}^{\kappa \lambda} \right) .
  \end{equation*}
  In particular, condition 3. of the previous definition can be equivalently
  replaced by
  \begin{itemize}
    \item[3'.] for any $t \in [0, T]$ \begin{multline*}
        \varsigma_t (\varphi) = \varsigma_0 (\varphi) + \int_0^t \varsigma_r (A^{\BY}_r \varphi) - \sum_{\kappa,\lambda=1}^{d_Y} (\varsigma_t     ((\Gamma^{\BY}_t)_\kappa \varphi) - \varsigma_t (\varphi) \varsigma_t  (h_\kappa(t,\cdot \, ; \BY))) \dot{[\BY]}_t^{\kappa\lambda} \varsigma_t( h_\lambda(t,\cdot \, ;\BY)) \, dr \\ + \int_0^t \varsigma_r
       (\Gamma^{\BY}_r \varphi) - \varsigma_r (\varphi) \varsigma_r (h(r,\cdot \, ;\BY)) d \mathbf{Y}_r,
    \end{multline*}
  \end{itemize}
  and the quantity $\mu_{s, t}^{\natural, \varphi}$ in fact satisfies $|
  \mu_{s, t}^{\natural, \varphi} | \le K_{\varphi}  |t - s|^{3 \alpha}$ for
  any $s, t \in [0, T]$, where $K_{\varphi} > 0$ is uniformly over bounded
  sets of $\varphi$ in $\mathcal{C}^3_b (\R^{d_X} ; \R)$.
\end{proposition}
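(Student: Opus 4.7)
The plan is to mirror the sewing-lemma argument already executed for the rough Zakai integrand in \cref{prop:roughZakai_integralform}; the only new feature is that the compensated integrand carries an extra product-rule correction coming from the nonlinear term $\varsigma_r(\varphi)\varsigma_r(h_\kappa)$, but this is precisely what the definition of $(\Phi^{\varsigma,\varphi}_t)'_{\kappa\lambda}$ is designed to absorb.

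For $\varphi \in \mathcal{C}^3_b(\R^{d_X};\R)$ and $0 \le s \le u \le T$, I would set the germ
\begin{equation*}
  \Xi^{\varphi}_{s,u} := \sum_{\kappa=1}^{d_Y} (\Phi^{\varsigma,\varphi}_s)_\kappa \, \delta Y^\kappa_{s,u} + \sum_{\kappa,\lambda=1}^{d_Y} (\Phi^{\varsigma,\varphi}_s)'_{\lambda \kappa} \, \mathbb{Y}^{\kappa \lambda}_{s,u},
\end{equation*}
which is exactly the compensated Riemann summand in the claimed integral. By the sewing lemma (cf.\ Lemma 4.2 in \cite{FH20}), the existence of the limit reduces to proving
\begin{equation*}
  \big|\Xi^\varphi_{s,v} - \Xi^\varphi_{s,u} - \Xi^\varphi_{u,v}\big| \lesssim C_\varphi \, \nn{\BY}_\alpha^{2}\, |v-s|^{3\alpha}, \qquad 0 \le s \le u \le v \le T,
\end{equation*}
with $C_\varphi$ uniform on bounded subsets of $\mathcal{C}^3_b(\R^{d_X};\R)$.

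To verify this bound, I would invoke Chen's relation $\mathbb{Y}^{\kappa\lambda}_{s,v} = \mathbb{Y}^{\kappa\lambda}_{s,u} + \mathbb{Y}^{\kappa\lambda}_{u,v} + \delta Y^\kappa_{s,u}\delta Y^\lambda_{u,v}$ to obtain
\begin{multline*}
  \Xi^\varphi_{s,v} - \Xi^\varphi_{s,u} - \Xi^\varphi_{u,v} = \sum_\kappa \big[(\Phi^{\varsigma,\varphi}_s)_\kappa - (\Phi^{\varsigma,\varphi}_u)_\kappa\big]\delta Y^\kappa_{u,v} \\
  + \sum_{\kappa,\lambda}(\Phi^{\varsigma,\varphi}_s)'_{\lambda\kappa}\,\delta Y^\kappa_{s,u}\,\delta Y^\lambda_{u,v} + \sum_{\kappa,\lambda}\big[(\Phi^{\varsigma,\varphi}_s)'_{\lambda\kappa} - (\Phi^{\varsigma,\varphi}_u)'_{\lambda\kappa}\big]\mathbb{Y}^{\kappa\lambda}_{u,v}.
\end{multline*}
The second estimate of condition (2) in \cref{def:roughKushnerStratonovich} lets me replace the first bracket by $-\sum_\eta (\Phi^{\varsigma,\varphi}_s)'_{\kappa\eta}\delta Y^\eta_{s,u}$ modulo an $O_\varphi(|u-s|^{2\alpha})$ term; after the index relabeling $(\kappa,\eta)\leftrightarrow(\lambda,\kappa)$ the resulting contribution cancels the middle double sum exactly. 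What survives is the third double sum, estimated by the first bound in condition (2) as $O_\varphi(|u-s|^\alpha |v-u|^{2\alpha})$, together with a remainder of size $O_\varphi(|u-s|^{2\alpha}|v-u|^\alpha)$; both are $\le C_\varphi \nn{\BY}_\alpha^2|v-s|^{3\alpha}$ and, since $3\alpha>1$, the sewing lemma produces a unique additive $\mathcal{I}^\varphi$ with $|\mathcal{I}^\varphi_{s,t}-\Xi^\varphi_{s,t}|\le K_\varphi|t-s|^{3\alpha}$, and setting $\int_0^t\cdots d\BY_r:=\mathcal{I}^\varphi_{0,t}$ gives the Riemann-sum representation.

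For the equivalence $(3)\Leftrightarrow(3')$, the Davie expansion in (3) reads
\begin{equation*}
   \varsigma_t(\varphi) - \varsigma_s(\varphi) - \int_s^t \Big[\varsigma_r(A^\BY_r\varphi) - \sum_{\kappa,\lambda}(\Phi^{\varsigma,\varphi}_r)_\kappa \dot{[\BY]}_r^{\kappa\lambda}\varsigma_r(h_\lambda(r,\cdot\,;\BY))\Big]\,dr = \Xi^\varphi_{s,t} + \mu^{\varphi,\natural}_{s,t}
\end{equation*}
with $\mu^{\varphi,\natural}_{s,t}=o(|t-s|)$; the left-hand side being additive in $(s,t)$, the uniqueness part of the sewing lemma forces it to coincide with $\mathcal{I}^\varphi_{s,t}$, which gives (3') and the improved remainder bound $|\mu^{\varphi,\natural}_{s,t}| \le K_\varphi|t-s|^{3\alpha}$ with $K_\varphi$ uniform on bounded subsets of $\mathcal{C}^3_b$. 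The only genuinely delicate point is the index bookkeeping behind the cancellation in the previous paragraph; it works precisely because $(\Phi^{\varsigma,\varphi}_t)'_{\kappa\lambda}$ is constructed via the product rule to be the Gubinelli derivative of $(\Phi^{\varsigma,\varphi}_t)_\kappa$ along $Y^\lambda$, which is exactly the content of the formal identities preceding \cref{def:roughKushnerStratonovich}. Once this cancellation is in place, the remainder of the argument is verbatim that of \cref{prop:roughZakai_integralform}.
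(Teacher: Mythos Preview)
Your proposal is correct and takes exactly the same approach as the paper's proof, which simply states that the argument follows \cref{prop:roughZakai_integralform} with the germ $\Xi^{\varphi}_{s,u} = \sum_{\kappa} (\Phi^{\varsigma,\varphi}_s)_\kappa \delta Y^{\kappa}_{s,u} + \sum_{\kappa,\lambda} (\Phi^{\varsigma,\varphi}_s)'_{\lambda \kappa} \mathbb{Y}_{s,u}^{\kappa\lambda}$. In fact you spell out more of the sewing computation and the index cancellation than the paper does.
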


\begin{proof}
  Follows in a similar fashion to the proof of \cref{prop:roughZakai_integralform}, taking
  $$\Xi^{\varphi}_{s, u} = \sum_{\kappa=1}^{d_Y} (\Phi^{\varsigma,\varphi}_s)_\kappa \delta Y^{\kappa}_{s, u} + \sum_{\kappa, \lambda =1}^{d_Y} (\Phi^{\varsigma,\varphi}_s)'_{\lambda \kappa} \mathbb{Y}_{s,u}^{\kappa \lambda} . $$
\end{proof}

Existence for rough Kushner--Stratonovich follows from existence for rough Zakai, while uniqueness of the former is equivalent to uniqueness of the latter. 

\begin{theorem} \label{thm:existenceroughKS}
Let $\mu:[0,T] \to \mathcal{M}(\mathbb{R}^{d_X})$ be a measure-valued solution to the rough Zakai equation \eqref{eq:roughZakai_CITE} in the sense of \cref{def:solutionroughZakai} such that $\mu_t(1) >0$ \footnote{Notice that $\mu^\BY$ defined as in \eqref{eq:roughunnormfilter} satisfies $\mu^\BY_t(1)>0$ for any $t \in [0,T]$. However we consider here just a solution in the sense of \cref{def:solutionroughZakai}.} for any $t \in [0,T]$.
Define the measure-valued path $\varsigma: [0,T] \to \mathcal{M}(\mathbb{R}^{d_X})$ via \begin{equation} \label{eq:definitionofvarsigma_existenceroughKS}
    \varsigma_t(\varphi) := \frac{\mu_t(\varphi)}{\mu_t(1)} 
\end{equation} for any $\varphi:\mathbb{R}^{d_X} \to \mathbb{R}$ measurable and bounded. 
Then $\varsigma$ solves the following rough PDE, in the sense of \cref{def:roughKushnerStratonovich}: 
\begin{equation} \label{eq:roughKushnerStratonovich_CITE}
    \varsigma_t (\varphi) = \varsigma_0 (\varphi) + \int_0^t \varsigma_r     (A^{\BY}_r \varphi) d r + \int_0^t (\varsigma_r     (\Gamma^{\BY}_r \varphi) - \varsigma_r (\varphi) \varsigma_r     (h(r,\cdot;\BY))) (d \BY_r - \dot{[\BY]}_r \varsigma_r(h(r,\cdot;\BY)^\top) \, dr). 
\end{equation}
\end{theorem}

\begin{proof} 
Let $\varsigma:[0,T] \to \mathcal{M}(\R^{d_X})$ be defined as in \eqref{eq:definitionofvarsigma_existenceroughKS}. By construction, it is weakly continuous and $\varsigma_t(1)=1$ for any $t$. 
Note that $h_\kappa(t,x;\BY) = (\Gamma^\BY_t)_\kappa 1(x)$. From \cref{def:solutionroughZakai} we have, for any $0 \le s \le t \le T$,  \begin{equation*}
    \mu_t (1) - \mu_s (1) = 
     \mu_s ((\Gamma_s^{\BY})_{\kappa} 1) \delta
    Y^{\kappa}_{s, t} + \mu_s
    ((\Gamma_s^{\BY})_{\kappa} (\Gamma_s^{\BY})_{\lambda} 1
    + (\Gamma_s^{\BY})'_{\lambda \kappa} 1) \mathbb{Y}^{\kappa
    \lambda}_{s, t} + \mu_{s, t}^{1, \natural} .
\end{equation*}
Straightforwardly from \cite[Theorem 7.7]{FH20} with $F(x)=\frac{1}{x}$, together with a trivial localization argument for $x \mapsto 1/x$ away from zero, 
\begin{equation} \label{eq:expansion1/mu}
\begin{aligned}
    \frac{1}{\mu_t (1)} - \frac{1}{\mu_s (1)} 
    & =  \int_s^t 
    \varsigma_s
    ((\Gamma_s^{\BY})_{\kappa} 1) \varsigma_s
    ((\Gamma_s^{\BY})_{\lambda} 1) 
    (\dot{[\BY]}_r)^{\kappa \lambda} \frac{1}{\mu_r(1)} \, d r\\
    &   - \frac{\varsigma_s ((\Gamma_s^\BY)_{\kappa}
    1)}{\mu_s (1)} \delta Y^{\kappa}_{s, t} \\
    &   + \left( - \varsigma_s
    ((\Gamma_s^{\BY})_{\kappa} (\Gamma_s^{\BY})_{\lambda} 1
    + (\Gamma_s^{\BY})'_{\lambda \kappa} 1) + 2 \varsigma_s
    ((\Gamma_s^{\BY})_{\kappa} 1) \varsigma_s
    ((\Gamma_s^{\BY})_{\lambda} 1) \right) \frac{1}{\mu_s(1)}
    \mathbb{Y}^{\kappa \lambda}_{s, t} \\
    &   + P_{s,t} ,
\end{aligned}
\end{equation}
with $|P_{s,t}| \lesssim |t-s|^{3\alpha}$.
Another application of \cite[Theorem 7.7]{FH20} with $F(x,y)=x \cdot\frac{1}{y}$ shows that the measure-valued path $\varsigma$ satisfies requirement \textit{3.}\ in \cref{def:roughKushnerStratonovich}. 
To prove that condition \textit{2.}\ is satisfied, it is sufficient to justify the two formal identities in \eqref{eq:formalidentities_roughKS}.
Let
$\varphi \in \mathcal{C}^3_b(\R^{d_X};\R)$ be fixed. 
 From \eqref{eq:expansion1/mu} we get that, for any $0 \le s \le t \le T$ and for any $\kappa=1,\dots,d_Y$,
 \begin{align*}
    &\varsigma_t((\Gamma^\BY_t)_\kappa\varphi) - \varsigma_s((\Gamma^\BY_s)_\kappa\varphi) = \frac{1}{\mu_s(1)} (\mu_t((\Gamma^\BY_t)_\kappa\varphi) - \mu_s((\Gamma^\BY_s)_\kappa\varphi))   \\ & \quad +\mu_s((\Gamma^\BY_s)_\kappa\varphi) \left(\frac{1}{\mu_t(1)} - \frac{1}{\mu_s(1)} \right) + \left(\frac{1}{\mu_t(1)} - \frac{1}{\mu_s(1)} \right)(\mu_t((\Gamma^\BY_t)_\kappa\varphi) - \mu_s((\Gamma^\BY_s)_\kappa\varphi))  \\
    &= [\varsigma_s( (\Gamma_s^\BY)_\eta (\Gamma_s^\BY)_\kappa \varphi + (\Gamma^\BY_s)'_{\kappa \eta} \varphi ) - \varsigma_s((\Gamma^\BY_s)_\kappa \varphi) \varsigma_s((\Gamma^\BY_s)1) ] \delta Y^\eta_{s,t} + Q_{s,t}
\end{align*}
where, by construction, $|Q_{s,t}| \lesssim |t-s|^{2\alpha}$. Similarly \begin{multline*}
    \varsigma_t(\varphi) - \varsigma_s(\varphi) = \frac{1}{\mu_s(1)} (\mu_t(\varphi)-\mu_s(\varphi)) + \left( \frac{1}{\mu_t(1)} - \frac{1}{\mu_s(1)} \right) \mu_s(\varphi)  \\
     + \left( \frac{1}{\mu_t(1)} - \frac{1}{\mu_s(1)} \right) (\mu_t(\varphi)-\mu_s(\varphi)) = [ \varsigma_s((\Gamma_s^\BY)_\eta) - \varsigma_s((\Gamma^\BY_s)_\eta 1) \varsigma_s(\varphi) ] \delta Y^\eta_{s,t} + R_{s,t}
\end{multline*}
with $|R_{s,t}|\lesssim |t-s|^{2\alpha}$. Now we show that condition \textit{1.} of \cref{def:roughKushnerStratonovich} is also satisfied. The map $[0,T] \ni t \mapsto \varsigma_t(A^\BY_t \varphi) \in \R$ is measurable because $t \mapsto \mu_t(A^\BY_t \varphi)$ is measurable and $t \mapsto \mu_t(1)$ is continuous and bounded away from 0. Moreover, \begin{equation*}
    \int_0^T |\varsigma_t(A^\BY_t \varphi)| \, dt \le \frac{1}{\min_{0\le t \le T} \mu_t(1)} \int_0^T |\mu_t(A^\BY_t \varphi)| \, dt < +\infty. 
\end{equation*}
Note that $t \mapsto  \sum_{\kappa,\lambda=1}^{d_Y} (\varsigma_t((\Gamma^\BY_t)_\kappa\varphi) -\varsigma_t(\varphi) \varsigma_t((\Gamma^\BY_t)_\kappa 1)) \dot{[\BY]}^{\kappa\lambda}_t \varsigma_t((\Gamma^\BY_t)_\lambda 1)$ is continuous, which concludes the proof. 
\end{proof}

\begin{theorem} \label{thm:uniqueness_roughKS}
  Let $\mu_0 = \varsigma_0$ be probability measures on $\R^{d_X}$. Then the following are equivalent:
  \begin{enumerate}
      \item there exists at most one measure-valued solution $\mu$ to the rough Zakai equation \eqref{eq:roughZakai_CITE} starting from $\mu_0$ and such that $\mu_t(1) >0$ for any $t \in [0,T]$;
      \item there exists at most one measure-valued solution $\varsigma$ of the rough Kushner--Stratonovich equation \eqref{eq:roughKushnerStratonovich_CITE} starting from $\varsigma_0$ and such that
      \begin{equation} \label{eq:condition2b}
          (\Psi^\varsigma, (\Psi^\varsigma)') \in \mathscr{D}_Y^{2\alpha}([0,T], \R^{1\times d_Y})
      \end{equation}
         in the sense of \cref{def:stochasticcontrolledroughpaths} \footnote{For definiteness, the condition requires that the following are satisfied, for any $\kappa,\lambda=1,\dots,d_Y$ and for any $0 \le s \le t \le T$: 
          $$|(\Psi_t^\varsigma)'_{\kappa\lambda} - (\Psi_s^\varsigma)'_{\kappa\lambda}| \lesssim |t-s|^{\alpha} \quad \text{and} \quad  |(\Psi_t^\varsigma)_\kappa - (\Psi^\varsigma_s)_\kappa - \sum_{\eta=1}^{d_Y} (\Psi_s^\varsigma)'_{\kappa\lambda} \delta Y^\eta_{s,t}| \lesssim |t-s|^{2\alpha}.$$
          }, where \begin{align*}
        (\Psi^\varsigma_t)_\kappa &:= \varsigma_t(h_\kappa(t,\cdot \, ; \BY)) \\
        (\Psi^\varsigma_t)'_{\kappa\lambda} &:= \varsigma_t((\Gamma^\BY_t)_\lambda h_\kappa(t,\cdot\, ; \BY)+h'_{\kappa\lambda} (t,\cdot \, ; \BY)) - \varsigma_t(h_\kappa(t,\cdot \, ; \BY )) \varsigma_t(h_\lambda(t,\cdot \, ; \BY)).
    \end{align*} 
          for $ t \in [0,T]$ and $\kappa,\lambda=1,\dots,d_Y$.   
    \end{enumerate}
\end{theorem}

\begin{remark}
    It is easy to see that condition \eqref{eq:condition2b} holds for any solution $\varsigma$ to \eqref{eq:roughKushnerStratonovich_CITE} in the sense of \cref{def:roughKushnerStratonovich}, provided that $[ t \mapsto (h(t,\cdot \, ; \BY), h'(t,\cdot \, ; \BY) )] \in \mathscr{D}_Y^{2\alpha} \mathcal{C}^\beta_b $ in the sense of \cref{def:stochasticcontrolledvectorfields_intro} and for some $\beta \in \left(\frac{1}{\alpha},3\right]$. 
    In particular, condition \eqref{eq:condition2b} holds under \cref{assum:E}.
\end{remark}

\begin{proof}
    Let us first assume that \textit{2.}\ holds.  
    Let $\mu^1$ and $\mu^2$ be two measure-valued solutions to \eqref{eq:roughZakai_CITE} such that $\mu_t^i(1) >0$ for any $t \in [0,T]$ and $i=1,2$. 
   Notice that the total mass processes can be expressed as solutions to the following \begin{equation*}
        \mu^i_t(1) = 1 + \int_0^t \frac{\mu^i_r(\Gamma^\BY_r 1)}{\mu_r^i(1)} \mu_r^i(1) \, d \BY_r \qquad t \in [0,T], \ i=1,2  .
    \end{equation*}
    Define now $\varsigma^i := \frac{\mu^i}{\mu^i(1)}$, $i=1,2$.  With the same argument as in the proof of \cref{thm:existenceroughKS} we have that $\varsigma^1$ and $\varsigma^2$ both solve \eqref{eq:roughKushnerStratonovich_CITE} and satisfy condition \eqref{eq:condition2b}.
    Hence, by uniqueness, $\varsigma^1=\varsigma^2=\varsigma$. Moreover, $Z^i_t := \mu^i_t(1)$ solves the linear equation \eqref{eq:auxiliarylinearRDE} of \cref{lemma:linearauxilaryequation} with $Z^i_0=1$, for $i=1,2$. Hence, $Z^1=Z^2$. We conclude that  \begin{equation*}
        \mu^1_t = \mu^1_t(1) \varsigma^1_t = \mu^2_t(1) \varsigma^2_t = \mu^2_t \qquad \text{for any $t \in [0,T]$} .
    \end{equation*}

    Let us now assume that \textit{1.}\ is satisfied. 
    Let $\varsigma^1$ and $\varsigma^2$ be two measure-valued solutions to \eqref{eq:roughKushnerStratonovich_CITE} starting from $\varsigma_0$ and satisfying condition \eqref{eq:condition2b}.
    By \cref{lemma:linearauxilaryequation}, we can consider $Z^{\varsigma^1}$ and $Z^{\varsigma^2}$ as the solutions to the linear RDE \eqref{eq:auxiliarylinearRDE} starting from $Z^{\varsigma^1}_0=Z^{\varsigma^2}_0=1$. 
    Moreover, both $Z^{\varsigma^1}\varsigma^1$ and $Z^{\varsigma^2}\varsigma^2$ are solutions to \eqref{eq:roughZakai_CITE} starting from $\varsigma_0$.
    Recall that by assumption $\varsigma_0(1)=1$. Hence, by \cref{lemma:linearauxilaryequation}, $\varsigma^1_t$ and $\varsigma^2_t$ are probability measures for any $t \in [0,T]$. We deduce that $Z^{\varsigma^i}_t \varsigma_t^i(1) = Z^{\varsigma^i}_t >0$ for any $ t \in [0,T]$.     
    By uniqueness, $Z^{\varsigma^1}\varsigma^1=Z^{\varsigma^2}\varsigma^2 = \mu$.  
    In particular \begin{equation*}
        Z_t^{\varsigma^1} = Z_t^{\varsigma^1} \varsigma_t^1(1) = \mu_t(1) = Z_t^{\varsigma^2} \varsigma^2_t(1) = Z_t^{\varsigma^2}  \qquad \text{for any $t \in [0,T]$}.
    \end{equation*}
    Recalling that $\mu_t(1) >0$ for any $t \in [0,T]$, we conclude that $\mu_t(1) \varsigma^1_t = \mu_t(1) \varsigma^2_t$, for any $t \in [0,T]$, which proves $\varsigma^1=\varsigma^2$ being $\mu_t(1) >0$.
\end{proof}

The following result is needed in the proof of \cref{thm:uniqueness_roughKS}. 

\begin{lem} \label{lemma:linearauxilaryequation}
    Let $\varsigma$ be a measure-valued solution to \eqref{eq:roughKushnerStratonovich_CITE} satisfying condition \eqref{eq:condition2b} of \cref{thm:uniqueness_roughKS}. 
    Then the following linear RDE is well-posed: \begin{equation} \label{eq:auxiliarylinearRDE}
        dZ^\varsigma_t = Z_t^\varsigma \varsigma_t(h(t,\cdot \, ; \BY))  \, d\BY_t.      \end{equation} 
    In particular,  its unique solution $Z^\varsigma$ is strictly positive if $Z^\varsigma_0 >0$. 
    Moreover, if $\varsigma_0(1)=1$ the following hold true: \begin{enumerate} 
        \item[i.]  every $\varsigma_t$ is a probability measure on $\mathbb{R}^{d_Y}$;
        \item[ii.] $Z^\varsigma \varsigma$ is a solution to \eqref{eq:roughZakai_CITE} starting from $Z^\varsigma_0 \varsigma_0$. 
    \end{enumerate}
\end{lem}

\begin{proof}
    Well-posedness of \eqref{eq:auxiliarylinearRDE} follows by arguing as in \cref{thm:wellposedness_roughfiltering}, with the only difference that now $Z^\varsigma$ is deterministic.
    Its (unique) solution is necessarily given by \begin{equation*}
        Z^\varsigma_t = Z^\varsigma_0 \exp \left( \int_0^t \varsigma_t(h(t,\cdot \, ; \BY)) \, d \BY_r \right) \qquad t \in [0,T]
    \end{equation*}
    and it is strictly positive for any $t$ provided that $Z^\varsigma_0 >0$.
    Recall that a solution to \eqref{eq:auxiliarylinearRDE} is a continuous map $Z^\varsigma:[0,T] \to \R$ such that for any $0 \le s \le t \le T$ \begin{equation} \label{eq:DavieexpansionforZ}
        Z^\varsigma_t - Z^\varsigma_s = 
        \sum_{\kappa=1}^{d_Y} (\Psi_s^\varsigma)_\kappa Z^\varsigma_s \delta Y^\kappa_{s,t}
        + \sum_{\kappa,\lambda=1}^{d_Y} ( (\Psi_s^\varsigma)_\lambda (\Psi_s^\varsigma)_\kappa Z^\varsigma_s + (\Psi_s^\varsigma)'_{\lambda \kappa} Z^\varsigma_s) \mathbb{Y}^{\kappa \lambda}_{s,t} + Z^{\varsigma,\natural}_{s,t}
    \end{equation}
    with $Z^{\varsigma,\natural}_{s,t} = o(|t-s|)$ as $|t-s|\to 0$. 
      \\ 

    \textit{i.} \ Assume $\varsigma_0(1)=1$ and define $a_t := \varsigma_t(1)-1, \ t \in [0,T]$. It is straightforward to see that \begin{align*}
        a_t &= \varsigma_t(1) - \varsigma_0(1) = \int_0^t \varsigma_r(h(r,\cdot)) \underbrace{(1-\varsigma_r(1))}_{=-a_r}  \, (d\BY_r - \dot{[\BY]}_r \varsigma_r(h(r,\cdot \, ; \BY)^\top) \, dr) \quad t \in [0,T]. 
    \end{align*}
    Hence, $a$ is a solution to a linear RDE (with drift) starting from $a_0=0$. The well-posedness of such an equation follows with a similar argument as for \eqref{eq:auxiliarylinearRDE}, additionally noting that $[t \mapsto - \sum_{\kappa,\lambda=1}^{d_Y} \varsigma_t(h_\kappa(t,\cdot \, ; \BY)) \dot{[\BY]} \varsigma_t(h_\lambda(t,\cdot\, ; \BY))]$ is measurable and bounded 
    by condition \eqref{eq:condition2b}. 
    It follows that $a_t=0$ for any $t \in (0,T]$.  \\

    \textit{ii.} \ For sake of compact notation we write $h(t,x)$ 
    instead of $h(t,x;\BY)$ and $Z$ instead of $Z^\varsigma$. 
    Define $\mu_t(\varphi) := Z_t \varsigma_t(\varphi)$ for $ \ t \in [0,T]$. 
    Note that $[t \mapsto \mu_t (A^\BY_t \varphi)
    ]$ belongs to $ L^1([0,T])$ since $Z$ is continuous and $[t \mapsto \varsigma_t (A^\BY_t \varphi) 
    ]$ is Lebesgue integrable by assumption.  
    Recall the Davie-type expansion for $\varsigma(\varphi)$ with the notation introduced in \cref{def:roughKushnerStratonovich} and also consider the one for $Z^\varsigma$ in \eqref{eq:DavieexpansionforZ}.
    One can therefore get that
    \begin{align*}
        & Z_t (\Phi_t^{\varsigma,\varphi})'_{\lambda \kappa} + \varsigma_t(\varphi) ((\Psi^\varsigma_t)_\lambda (\Psi^\varsigma_t)_\kappa Z_t + (\Psi^\varsigma_t)'_{\lambda \kappa}Z_t) +(\Psi^\varsigma_t)_\kappa Z_t (\Phi^{\varsigma,\varphi}_t)_\lambda +(\Psi^\varsigma_t)_\lambda Z_t (\Phi^{\varsigma,\varphi}_t)_\kappa =  \\
        &= Z_s (\Phi_s^{\varsigma,\varphi})'_{\lambda \kappa} + \varsigma_s(\varphi) ((\Psi^\varsigma_s)_\lambda (\Psi^\varsigma_s)_\kappa Z_s + (\Psi^\varsigma_s)'_{\lambda \kappa}Z_s) +(\Psi^\varsigma_s)_\kappa Z_s (\Phi^{\varsigma,\varphi}_s)_\lambda  +(\Psi^\varsigma_s)_\lambda Z_s (\Phi^{\varsigma,\varphi}_s)_\kappa  + P_{s,t} 
        \end{align*} and
        \begin{align*}
            & \varsigma_t(\varphi) (\Psi^\varsigma_t)_\kappa Z_t + Z_t (\Phi^{\varsigma,\varphi}_t)_\kappa - \varsigma_s(\varphi) (\Psi^\varsigma_s)_\kappa Z_s + Z_s (\Phi^{\varsigma,\varphi}_s)_\kappa  \\
        &= \sum_{\eta=1}^{d_Y} \Big(Z_s (\Phi_s^{\varsigma,\varphi})'_{\eta \kappa} + \varsigma_s(\varphi) ((\Psi^\varsigma_s)_\eta (\Psi^\varsigma_s)_\kappa Z_s + (\Psi^\varsigma_s)'_{\eta \kappa}Z_s) +(\Psi^\varsigma_s)_\kappa Z_s (\Phi^{\varsigma,\varphi}_s)_\eta  +(\Psi^\varsigma_s)_\eta Z_s (\Phi^{\varsigma,\varphi}_s)_\kappa\Big) \delta Y^{\eta}_{s,t} + Q_{s,t}
        \end{align*}
    where $|P_{s,t}|\lesssim |t-s|^{\alpha}$, $|Q_{s,t}|\lesssim |t-s|^{2\alpha}$.
    From \cite[Theorem 7.7]{FH20} with $F(x,y)=xy$ and for any $0 \le s \le t \le T$ we get that 
    \begin{align*}
        \mu_t(\varphi) - \mu_s(\varphi) &= \int_s^t \mathcal{A}^{\varsigma,\varphi;\BY}_r \, dr + \sum_{\kappa=1}^{d_Y} (\varsigma_s(\varphi) (\Psi^\varsigma_s)_\kappa Z_s + Z_s (\Phi^{\varsigma,\varphi}_s)_\kappa) \delta Y^\kappa_{s,t}  \\
        &\quad + \Big( Z_s (\Phi_s^{\varsigma,\varphi})'_{\lambda \kappa} + \varsigma_s(\varphi) ((\Psi^\varsigma_s)_\lambda (\Psi^\varsigma_s)_\kappa Z_s + (\Psi^\varsigma_s)'_{\lambda \kappa}Z_s)  \\
        & \quad +(\Psi^\varsigma_s)_\kappa Z_s (\Phi^{\varsigma,\varphi}_s)_\lambda + (\Psi^\varsigma_s)_\lambda Z_s (\Phi^{\varsigma,\varphi}_s)_\kappa \Big) \mathbb{Y}^{\kappa\lambda}_{s,t} + R_{s,t}
    \end{align*}
    where
     $|R_{s,t}|\lesssim |t-s|^{3\alpha}$ and  \begin{multline*}
        \mathcal{A}^{\varsigma,\varphi;\BY}_t = Z^\varsigma_t \varsigma_t(A^\BY_t \varphi) - \sum_{\kappa,\lambda=1}^{d_Y} \Big( Z^\varsigma_t \varsigma_t((\Gamma_t^\BY)_\kappa \varphi) \dot{[\BY]}_t^{\kappa\lambda} \varsigma_t( h_\lambda(r,\cdot))  \\
        + Z_t\varsigma_t(\varphi) \varsigma_t(h_\kappa(t,\cdot) \dot{[\BY]}_t^{\kappa\lambda} \varsigma_t( h_\lambda(r,\cdot)) + \sum_{\lambda=1}^{d_Y} (\Psi^{\varsigma}_t)_\lambda Z_t (\Phi_t^{\varsigma,\varphi})_\kappa \dot{[\BY]}_t^{\kappa\lambda} \Big).
    \end{multline*}
    By construction, $\varsigma_s(\varphi) (\Psi^\varsigma_s)_\kappa Z_s + Z_s (\Phi^{\varsigma,\varphi}_s)_\kappa = \mu_s((\Gamma^\BY_s)_\kappa \varphi)$ and \begin{multline*}
        Z_s (\Phi_s^{\varsigma,\varphi})'_{\lambda \kappa} + \varsigma_s(\varphi) ((\Psi^\varsigma_s)_\lambda (\Psi^\varsigma_s)_\kappa Z_s + (\Psi^\varsigma_s)'_{\lambda \kappa}Z_s) +(\Psi^\varsigma_s)_\kappa Z_s (\Phi^{\varsigma,\varphi}_s)_\lambda \\ +(\Psi^\varsigma_s)_\lambda Z_s (\Phi^{\varsigma,\varphi}_s)_\kappa = \mu_s((\Gamma^\BY_s)_\kappa (\Gamma_s^\BY)_\lambda \varphi + (\Gamma^\BY_s)'_{\lambda \kappa}\varphi) .
    \end{multline*}
    Hence, it is easy to see that \begin{equation*}
        \mathcal{A}^{\varsigma,\varphi;\BY}_t = \mu_t (A^\BY_t \varphi) 
        .
    \end{equation*}
\end{proof}

\subsection{Extension to degenerate observation noise }\label{subsec:degen-case}
In this section we discuss how to extend the rough filtering framework to the case of a degenerate observation noise (i.e.\ when $k$ is not invertible). Taking inspiration from the stochastic setting (cf. \cref{sec:review}), rewrite the signal dynamics \eqref{eq:stoch-sig-1} as
\begin{align*}
    dX_t 
    &= (\bar b - \bar f h_2)(t,X_t,Y_t) \, dt + \sigma(t,X_t,Y_t) \, dB_t + \bar f(t,X_t,Y_t) k^+(t,Y_t) \, (dY_t - h_1(t,Y_t) \, dt )  \\ & \quad +  \bar f(t,X_t,Y_t) (I - k^+(t,Y_t) k(t,Y_t)) \, dW_t  \\
    &=: (\bar b - \bar f h_2)(t,X_t,Y_t) \, dt + \sigma(t,X_t,Y_t) \, dB_t + \bar{\mathfrak{f}}(t,X_t,Y_t) \,  d \hat{Y}_t    
\end{align*}
where $k^+$ denotes the Moore-Penrose inverse of $k$ (cf.\ \cite[Appendix]{CP24}), with $(d_Y + 2 d_W)$-dimensional process $\hat Y$ defined as  \begin{equation*}
   \hat Y_t = \begin{pmatrix}
       \hat Y^0_t \\ \hat Y ^1_t \\ \hat Y^2_t
   \end{pmatrix} := 
   \begin{pmatrix}
        Y_t \\ \int_0^t k^+(r,Y_r) (dY_t - h_1(r,Y_r) \, dr) \\ \int_0^t (I- k^+(r,Y_r) k(r,Y_r)) \, dW_r    \end{pmatrix}
\end{equation*}
 and $\bar{\mathfrak{f}}:[0,T] \times \R^{d_X} \times \R^{d_Y} \to \mathrm{Lin}(\R^{d_Y + 2d_W},\R^{d_X})$ defined by \begin{equation*}
     \bar{\mathfrak{f}}(t,x,y) \hat y = \bar f(t,x,y) \hat y^1 + \bar f(t,x,y) \hat y^2 \qquad \text{for any $\hat y = (\hat y^0, \hat y^1, \hat y^2)^\top \in \R^{d_Y + 2d_W}$}.
 \end{equation*} 
 Similarly, we rewrite equation \eqref{eq:stoch-z-1} as \begin{equation*}
     dZ_t = Z_t \mathfrak{h}(t,X_t,Y_t) \, d \hat Y_t
 \end{equation*}
 where $\mathfrak{h}:[0,T] \times \R^{d_X} \times \R^{d_Y} \to \mathrm{Lin}(\R^{d_Y + 2d_W},\R)$ is defined by $\bar{\mathfrak{h}}(t,x,y) \hat y = h_2^\top (t,x,y) \hat y^1 + h_2^\top (t,x,y) \hat y^2$.

 \begin{proposition}
     Let \cref{assumptionCPE} be in force. For any $s,t \in [0,T]$ define \begin{equation*}
         \hat{\mathbb{Y}}^{\text{It\^o}}_{s,t}(\omega) := \left(\int_s^t (\hat Y_r - \hat Y_s) \, d \hat Y_r\right) (\omega). 
     \end{equation*}
     Then $\hat{\BY}^{\text{It\^o}}(\omega) := (\hat Y(\omega), \hat{\mathbb{Y}}^{\text{It\^o}}(\omega)) \in \mathscr{C}^\alpha([0,Y], \R^{d_Y + 2d_W})$, for any $\alpha \in (\frac{1}{3}, \frac{1}{2})$ and for $\mathbb{P}$-almost every $\omega \in \Omega$.  
 \end{proposition}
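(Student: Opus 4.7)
The plan is to recognize $\hat Y$ as a continuous $\mathbb{P}$-semimartingale in $\R^{d_Y+2d_W}$ and then apply the standard It\^o lift of a semimartingale to a rough path. Concretely, under $\mathbb{P}$ we have $dY_t = h_1(t,Y_t)\,dt + k(t,Y_t)\,dW_t$ with $W$ a Brownian motion, so the three components of $\hat Y$ are
\begin{equation*}
\hat Y^0 = Y,\qquad \hat Y^1_t = \int_0^t (k^+ k)(r,Y_r)\,dW_r,\qquad \hat Y^2_t = \int_0^t (I - k^+ k)(r,Y_r)\,dW_r,
\end{equation*}
the latter two being continuous local martingales. The iterated It\^o integral $\hat{\mathbb{Y}}^{\text{It\^o}}_{s,t}$ is thus well-defined as a stochastic integral of a continuous adapted integrand against a continuous semimartingale.

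The main technical obstacle is that \cref{assumptionCPE} only provides linear growth (not boundedness) of the coefficients, so we cannot a priori apply BDG with uniform bounds. We circumvent this by localization: introduce stopping times $\tau_n := \inf\{t \ge 0 : |Y_t| \ge n\}\wedge T$, which satisfy $\tau_n \uparrow T$ a.s. because $Y$ has continuous paths. On $[0,\tau_n]$, linear growth gives $|k^+k|, |I-k^+k|, |h_1(\cdot,Y_\cdot)|, |k(\cdot,Y_\cdot)| \le C_n$, and then, by the BDG inequality, one obtains for every $p \ge 2$
\begin{equation*}
\mathbb{E}\bigl[|\hat Y_{t\wedge \tau_n} - \hat Y_{s\wedge\tau_n}|^p\bigr] \le C_{p,n}|t-s|^{p/2}, \qquad 0 \le s \le t \le T.
\end{equation*}
Kolmogorov's continuity criterion then yields that $\hat Y^{\cdot\wedge\tau_n}$ has $\alpha$-H\"older paths for every $\alpha < 1/2$. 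Letting $n\to\infty$ and using $\tau_n\uparrow T$ a.s., one concludes that $t \mapsto \hat Y_t(\omega)$ is $\alpha$-H\"older on $[0,T]$ for $\mathbb{P}$-a.e.\ $\omega$.

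For the second level, on $[0,\tau_n]$ the integrand $r\mapsto \hat Y_r - \hat Y_s$ is bounded by $2n$, and the integrator $\hat Y$ has bounded drift and bounded quadratic variation density. Another application of BDG (for the martingale part) together with a straightforward estimate on the bounded-variation part gives
\begin{equation*}
\mathbb{E}\bigl[|\hat{\mathbb{Y}}^{\text{It\^o}}_{s\wedge\tau_n,\, t\wedge\tau_n}|^p\bigr] \le C_{p,n}'|t-s|^{p}, \qquad 0 \le s \le t \le T,
\end{equation*}
for every $p \ge 2$. Kolmogorov's criterion, applied to the two-parameter increment $\hat{\mathbb{Y}}^{\text{It\^o}}_{s,t}$, then yields $2\alpha$-H\"older regularity of $(s,t)\mapsto\hat{\mathbb{Y}}^{\text{It\^o}}_{s\wedge\tau_n,\, t\wedge\tau_n}(\omega)$ for every $\alpha<1/2$, a.s. Letting $n\to\infty$ promotes this to the unlocalised statement.

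Finally, Chen's relation
\begin{equation*}
\hat{\mathbb{Y}}^{\text{It\^o}}_{s,u} - \hat{\mathbb{Y}}^{\text{It\^o}}_{s,t} - \hat{\mathbb{Y}}^{\text{It\^o}}_{t,u} = (\hat Y_t - \hat Y_s)\otimes(\hat Y_u - \hat Y_t), \qquad 0 \le s \le t \le u \le T,
\end{equation*}
is an immediate consequence of the additivity of the It\^o integral over $[s,u] = [s,t]\cup[t,u]$. Combining this with the H\"older estimates established above shows $\hat{\BY}^{\text{It\^o}}(\omega) \in \mathscr{C}^\alpha([0,T],\R^{d_Y+2d_W})$ for every $\alpha \in (1/3,1/2)$, $\mathbb{P}$-almost surely.
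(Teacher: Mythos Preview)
Your approach is correct in outline and is essentially what the paper's cited ``standard result'' (\cite[Proposition 2.11]{BBFP25}) does under the hood: recognize $\hat Y$ as a continuous semimartingale, then use BDG/Kolmogorov (with localization if needed) to produce the rough path lift. The paper's own proof is much terser---it simply notes that $\hat Y$ is a continuous semimartingale, invokes properties of the Moore--Penrose inverse from \cite[Appendix]{CP24}, and cites the standard lift result.

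There is, however, a gap in your justification. You write that ``linear growth gives $|k^+k|, |I-k^+k| \le C_n$ on $[0,\tau_n]$''. Linear growth is assumed only on $k$, not on $k^+$, and in general $k^+$ can blow up where the smallest nonzero singular value of $k$ degenerates. The correct reason these integrands are bounded is structural: for \emph{any} matrix $A$, the product $A^+A$ is the orthogonal projection onto the row space of $A$, so $|k^+k| \le 1$ and $|I-k^+k|\le 1$ \emph{uniformly}, with no localization needed for these terms. This is precisely the observation the paper makes (``$[t \mapsto k^+(t,Y_t)k(t,Y_t)]$ is uniformly bounded''). Your localization is still needed for $\hat Y^0 = Y$, since $h_1,k$ only satisfy linear growth.

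A second point the paper flags and you omit: measurability of $t \mapsto k^+(t,Y_t)$ is not automatic, because $A \mapsto A^+$ is discontinuous at rank changes. The paper defers this to the Moore--Penrose appendix in \cite{CP24}; you should at least acknowledge that this requires an argument.
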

 \begin{proof}
     Note that $\hat Y$ is a continuous semimartingale. By the properties of the Moore--Penrose inverse (see \cite[Appendix]{CP24}), $[t \mapsto k^+(t,Y_t)]$ is progressively $\{\mathcal{F}^Y_t\}_t$-measurable and $[t \mapsto k^+(t,Y_t) k(t,Y_t)]$ is uniformly bounded on $[0,T]$. 
     The assertion follows as a standard rough path theory result (see, for instance, \cite[Proposition 2.11]{BBFP25}). 
 \end{proof}

Let $\alpha \in (\frac{1}{3}, \frac{1}{2}]$. 
Let $\BY = (\pi_1 \BY, \pi_2 \BY) \in \mathscr{C}^{0,\alpha,1}([0,T], \R^{d_Y})$ and let $\hat \BY = (\pi_1 \hat \BY, \pi_2 \hat \BY) \in \mathscr{C}^\alpha([0,T], \R^{d_Y + 2d_W})$ such that $(\pi_1 \hat \BY)_t^i = (\pi_1 \BY)_t^i$ for any $t \in [0,T]$ and for any $i=1,\dots, d_Y$. Assume \begin{equation*}
    \delta [\hat \BY]_{0,t} = \int_0^t [\dot{\hat{\BY}}]_r \, dr
\end{equation*}
for some $[t \mapsto [\dot{\hat \BY}]_t] \in L_1([0,T];\R^{d_Y + 2d_W} \otimes \R^{d_Y + 2d_W})$ not necessarily continuous. 
Define $\bar{\mathfrak{f}}(\cdot, \cdot \, ; \BY) : [0,T] \times \R^{d_X} \to \mathrm{Lin}(\R^{d_Y + 2d_W}; \R^{d_X})$ via \begin{equation*}
    \bar{\mathfrak{f}}(\cdot, \cdot \, ; \BY) \hat y := \bar{f}(\cdot, \cdot \, ; \BY) \hat y^1 + \bar{f}(\cdot, \cdot \, ; \BY) \hat y^2, \qquad \text{for any $\hat y= (\hat y^0, \hat y^1, \hat y^2)^\top \in \R^{d_Y + 2d_W}$},
\end{equation*} and define $\mathfrak{h}(\cdot, \cdot \, ; \BY) :[0,T] \times \R^{d_X} \to \mathrm{Lin}(\R^{d_Y + 2d_W}, \R)$ as $\mathfrak{h} (\cdot, \cdot \, ; \BY) \hat y = h_2^\top(\cdot, \cdot \, ; \BY) \hat y^1 + h_2^\top(\cdot, \cdot \, ; \BY) \hat y^2$. 
Consider the following system of RSDEs: 
\begin{align}
    dX_t^{\hat \BY} &=  \bar b(t,X^{\hat \BY}_t; \BY) \, dt + \sigma(t,X^{\hat \BY}_t;\BY) \, dB_t + \bar{\mathfrak{f}}(t,X^{\hat \BY}_t; \BY) \,  d \hat{\BY}_t
    \label{eq:roughsignal_degenerate} \\
    dZ^{\hat \BY}_t &= Z^{\hat \BY}_t \mathfrak{h}(t,X^{\hat \BY}_t;\BY) \, d \hat \BY_t, \quad Z^{\hat \BY}_0 = 1. \label{eq:roughZ_degenerate} 
\end{align}
Notice that it formally coincides with the one we considered in \eqref{eq:roughSDE_filtering}, \eqref{eq:Girsanovexponential_filtering}.
It is therefore not difficult to deduce an existence-and-uniqueness result for the corresponding rough Zakai (and Kushner--Stratonovich) equation. We briefly summarize here the main ideas. 

\begin{assumption}{ED} \label{assum:E_degenerate}
    Assume that \cref{assum:E} are satisfied with $b(\cdot, \cdot \, ; \BY)$ replaced by $\bar b(t, \cdot \, ; \BY)$, $f(\cdot, \cdot \, ; \BY)$ replaced by $\bar f(t, \cdot \, ; \BY)$ and $h(\cdot, \cdot \, ; \BY)$ replaced by $h_2(t, \cdot \, ; \BY)$. 
\end{assumption}

\begin{theorem}
    Under \cref{assum:E_degenerate}, equations \eqref{eq:roughsignal_degenerate} and \eqref{eq:roughZ_degenerate} are well-posed. 
\end{theorem}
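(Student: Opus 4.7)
The plan is to reduce the well-posedness of \eqref{eq:roughsignal_degenerate}--\eqref{eq:roughZ_degenerate} to a direct application of \cref{thm:wellposedness_roughfiltering}, with the sole difference that the driving rough path is now $\hat{\BY}$ rather than $\BY$. Concretely, the coefficients $\bar{\mathfrak f}$ and $\mathfrak h$ depend on time only through $\bar f$ and $h_2$, which under \cref{assum:E_degenerate} are stochastic controlled vector fields with respect to $Y=\pi_1\BY$. The task is therefore to construct natural Gubinelli derivatives $\bar{\mathfrak f}'$ and $\mathfrak h'$ with respect to $\hat Y=\pi_1\hat\BY$, and to verify that $(\bar{\mathfrak f},\bar{\mathfrak f}')\in\mathscr{D}^{2\alpha}_{\hat Y}\mathcal{C}^\beta_b$ and $(\mathfrak h,\mathfrak h')\in\mathscr{D}^{2\alpha}_{\hat Y}\mathcal{C}^2_b$.

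The definition is dictated by the structure of $\bar{\mathfrak f}$: for $\hat w=(\hat w^0,\hat w^1,\hat w^2)\in\R^{d_Y}\times\R^{d_W}\times\R^{d_W}$ and $\hat y=(\hat y^0,\hat y^1,\hat y^2)$ in the same space, set
\[
 \big(\bar{\mathfrak f}'(t,x;\BY)\,\hat w\big)\hat y := \big(\bar f'(t,x;\BY)\,\hat w^0\big)\hat y^1+\big(\bar f'(t,x;\BY)\,\hat w^0\big)\hat y^2,
\]
and analogously for $\mathfrak h'$ in terms of $h_2'$. Since $\delta\hat Y_{s,t}^0=\delta Y_{s,t}$ and $\delta\bar f_{s,t}(x)=\bar f'_s(x)\delta Y_{s,t}+R^{\bar f}_{s,t}(x)$ with $\|R^{\bar f}\|\lesssim|t-s|^{2\alpha}$, one gets immediately
\[
 \delta\bar{\mathfrak f}_{s,t}(x)\hat y-\big(\bar{\mathfrak f}'_s(x)\delta\hat Y_{s,t}\big)\hat y=R^{\bar f}_{s,t}(x)\hat y^1+R^{\bar f}_{s,t}(x)\hat y^2,
\]
which is of order $|t-s|^{2\alpha}$ uniformly in $\hat y$ in the unit ball of $\R^{d_Y+2d_W}$. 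The $\alpha$-Hölder regularity of $\delta\bar{\mathfrak f}'$ and the $\mathcal{C}^\beta_b$-regularity in $x$ are inherited transparently from $\bar f'$. The analogous verification for $(\mathfrak h,\mathfrak h')$ is identical. Note that the extra bracket information (the $L_1$-density $[\dot{\hat\BY}]$) plays no role in the well-posedness step; it will only matter when writing the degenerate rough Zakai equation.

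With these objects in hand, \eqref{eq:roughsignal_degenerate} falls directly under \cite[Theorem 4.6]{FHL21}: the drift $\bar b(t,\cdot;\BY)$ and diffusion $\sigma(t,\cdot;\BY)$ are bounded Lipschitz vector fields, and $(\bar{\mathfrak f},\bar{\mathfrak f}')\in\mathscr{D}^{2\alpha}_{\hat Y}\mathcal{C}^\beta_b$ with $\beta>1/\alpha$, yielding a unique $L_{2,\infty}$-integrable solution $X^{\hat\BY}$. For \eqref{eq:roughZ_degenerate}, composition of the stochastic controlled vector field $(\mathfrak h,\mathfrak h')$ with the process $(X^{\hat\BY},\bar{\mathfrak f}(\cdot,X^{\hat\BY};\BY))$ via \cite[Lemma 3.11]{FHL21} places
\[
 \big(\mathfrak h(\cdot,X^{\hat\BY};\BY),\;D_x\mathfrak h(\cdot,X^{\hat\BY};\BY)\bar{\mathfrak f}(\cdot,X^{\hat\BY};\BY)+\mathfrak h'(\cdot,X^{\hat\BY};\BY)\big)
\]
in $\mathbf{D}^{2\alpha}_{\hat Y}L_{2,\infty}\mathrm{Lin}$, and \cite[Theorem 3.11]{BCN24} then delivers the unique $L_2$-integrable solution $Z^{\hat\BY}$ to the linear RSDE.

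The only genuine obstacle is bookkeeping: one must be careful that the extension of $\bar f'$ (valued in $\mathrm{Lin}(\R^{d_Y},\mathrm{Lin}(\R^{d_W},\R^{d_X}))$) to a Gubinelli derivative acting on the larger space $\R^{d_Y+2d_W}$ is consistent with the block structure of $\bar{\mathfrak f}$, and that the remainder estimates survive the extension. Once the spaces are aligned this is entirely mechanical, and no new analytic input beyond \cref{thm:wellposedness_roughfiltering} is needed.
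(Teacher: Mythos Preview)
Your proposal is correct and follows essentially the same approach as the paper: both construct the Gubinelli derivative $\bar{\mathfrak f}'$ (resp.\ $\mathfrak h'$) with respect to $\hat Y$ by extending $\bar f'$ (resp.\ $h_2'$) trivially in the $\hat y^1,\hat y^2$ directions, verify that $(\bar{\mathfrak f},\bar{\mathfrak f}')\in\mathscr{D}^{2\alpha}_{\hat Y}\mathcal{C}^\beta_b$, and then invoke the RSDE well-posedness machinery. The paper writes the extension in component form as
\[
\bar{\mathfrak{f}}'_{\kappa \lambda}=
\begin{cases}
\bar f'_{\kappa \lambda} & d_Y +1 \le \kappa \le d_Y + d_W ,\ 1 \le \lambda \le d_Y \\
\bar f'_{(\kappa - d_W) \lambda} & d_Y + d_W + 1 \le \kappa \le d_Y + 2d_W ,\ 1 \le \lambda \le d_Y \\
0 & \text{otherwise},
\end{cases}
\]
which is exactly your coordinate-free formula $(\bar{\mathfrak f}'\hat w)\hat y = (\bar f'\hat w^0)\hat y^1 + (\bar f'\hat w^0)\hat y^2$. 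Your proof is in fact slightly more explicit than the paper's, spelling out the remainder estimate and the references to \cite[Theorem 4.6]{FHL21}, \cite[Lemma 3.11]{FHL21} and \cite[Theorem 3.11]{BCN24} that underlie \cref{thm:wellposedness_roughfiltering}.
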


\begin{proof}
    It is sufficient to note that $[t \mapsto b(t,\cdot\, ; \BY)]$ is a (deterministic) bounded Lipschitz vector field in the sense of \cite[Definition 4.1]{FHL21} and that both $\bar{\mathfrak{f}}(\cdot,\cdot\, ; \BY)$ and $\mathfrak{h}(\cdot,\cdot\, ; \BY)$ can be suitably controlled by $\hat Y = \pi_1 \hat \BY$.
    Indeed, one can define
    \begin{equation*}
        \bar{\mathfrak{f}}'_{\kappa \lambda}(t,\cdot \, ; \BY):=
        \begin{cases}
            \bar f'_{\kappa \lambda} (t,\cdot \, ; \BY) &  d_Y +1 \le \kappa \le d_Y + d_W , \ 1 \le \lambda \le d_Y \\
            \bar f'_{(\kappa - d_W) \lambda} (t,\cdot \, ; \BY) &  d_Y + d_W + 1 \le \kappa \le d_Y + 2d_W , \ 1 \le \lambda \le d_Y \\
            0 &  \text{otherwise}
        \end{cases}
    \end{equation*}
    and notice that $[t \mapsto (\bar{\mathfrak{f}}(t,\cdot\, ; \BY), \bar{\mathfrak{f}}(t,\cdot\, ; \BY))] \in \mathscr{D}_{\hat Y}^{2\alpha}\mathcal{C}^\beta_b$ in the sense of \cref{def:stochasticcontrolledvectorfields_intro}. A similar argument applies, mutatis mutandis, to $\mathfrak{h}$. 
\end{proof}

\begin{assumption}{UD} \label{assumptionU_degenerate} 
Let \cref{assumptionU} hold with $b(\cdot, \cdot \, ; \BY)$ replaced by $\bar b(t, \cdot \, ; \BY)$, $f(\cdot, \cdot \, ; \BY)$ replaced by $\bar f(t, \cdot \, ; \BY)$ and $h(\cdot, \cdot \, ; \BY)$ replaced by $h_2(t, \cdot \, ; \BY)$.
\end{assumption}

\begin{theorem}
    For any $t \in [0,T]$ and for any $\varphi:\R^{d_X} \to \R$ measurable and bounded, define 
    \begin{equation*} \label{eq:roughZakai_degenerate}
    \mu_t^{\hat \BY}(\varphi) := \E(\varphi(X_t^{\hat \BY}) Z_t^{\hat \BY}). 
    \end{equation*}
    Then, under \cref{assum:E_degenerate} and \cref{assumptionU_degenerate}, $\mu^{\hat \BY}$ is the unique measure-valued solution to the following equation starting from $\mu_0^{\hat \BY} = \mathrm{Law}(\xi^{\hat \BY})$, in the sense of \cref{def:solutionroughZakai}: \begin{equation} \label{eq:roughZakaiequation_degenerate}
        \mu_t(\varphi) = \mu_0(\varphi) + \int_0^t \mu_r(\mathfrak{A}^{\hat \BY}_r \varphi) \, dr + \int_0^t \mu_r(\mathfrak{G}^{\hat \BY}_r \varphi) \, d \hat \BY_r, 
    \end{equation}
    where \begin{align*}
        &\mathfrak{A}^{\hat \BY}_t \varphi(x) = \sum_{i,j=1}^{d_X} \Big( \sum_{\theta=1}^{d_B} \sigma_\theta^i(t,x;\BY) \sigma_\theta^j(t,x;\BY) + \sum_{\kappa,\lambda=1}^{d_Y + 2d_W} \bar{\mathfrak{f}}_\kappa^i (t,x;\BY) [\dot{\hat \BY}]^{\kappa \lambda}_t \bar{\mathfrak{f}}_\lambda^j (t,x;\BY)  \Big) \partial^2_{ij} \varphi(x)  \\
        &  \quad + \sum_{i=1}^{d_X} \Big(\bar b^i(t,x;\BY)- \sum_{\kappa=1}^{d_Y} \bar f_\kappa^i(t,x;\BY) h_2^\kappa(t,x;\BY) + \sum_{\kappa,\lambda=1}^{d_Y + 2d_W} \bar{\mathfrak{f}}_\kappa^i(t,x;\BY) [\dot{\hat \BY}]_t^{\kappa\lambda} \mathfrak{h}_\lambda(t,x;\BY) \Big) \partial_i \varphi (x) \\
        & (\mathfrak{G}^{\hat \BY}_t)_\kappa \varphi(x) = \sum_{i=1}^{d_X} \bar{\mathfrak{f}}_\kappa^i(t,x;\BY) \partial_i \varphi(x) + \mathfrak{h}_\kappa (t,x;\BY) \varphi(x) \qquad \kappa=1,\dots,d_Y+2d_W \\
         & (\mathfrak{G}^{\hat \BY}_t)'_{\kappa \lambda} \varphi(x) = \sum_{i=1}^{d_X} (\bar{\mathfrak{f}}'_{\kappa \lambda})^i(t,x;\BY) \partial_i \varphi(x) + \mathfrak{h}'_{\kappa \lambda} (t,x;\BY) \varphi(x) \qquad \kappa,\lambda = 1,\dots,d_Y+2d_W . 
    \end{align*}
\end{theorem}

\begin{proof}
    Straighforward modification of the proofs of \cref{thm:existence} and \cref{thm:uniqueness}.  
\end{proof}

\textit{Consistency with stochastic theory.} 
    Upon randomisation $\hat \BY \rightsquigarrow \hat \BY^{\text{It\^o}} (\omega)$, all the terms in \eqref{eq:roughZakaiequation_degenerate} turn into the correct stochastic terms appearing in the stochastic Zakai equation \eqref{equ:ZakDeg}.
    Randomisation is discussed in detail in \cref{section:bridgingroughandstochasticfiltering} in the non-degenerate case; the present remark implies that it also works in the degenerate case. 
    Arguing as we do in \cref{thm:mu_as_process} below in the non-degenerate case, it is possible to obtain that, under suitable assumptions, 
    \begin{equation*}
        \hat \mu_t (\omega) (\varphi) := \mu_t^{\hat \BY}(\varphi) | _{\hat \BY = \hat \BY^{\text{It\^o}}(\omega)} = \E( \varphi(X_t) Z_t \mid \mathcal{F}^{\hat{Y}}_t) (\omega)
    \end{equation*}
    where $X,Z$ are defined as in \eqref{eq:roughsignal_degenerate}, \eqref{eq:roughZ_degenerate}. Moreover, $\hat \mu$ solves the following stochastic PDE: 
        \begin{equation} \label{eq:intermediateZakai_degenerate}
        d \hat \mu_t(\varphi) = \hat \mu_t (\mathfrak{A}_t\varphi) \, dt + \sum_{\kappa=1}^{d_Y + 2d_W} \hat \mu_t((\mathfrak{G}_t)_\kappa \varphi) \, d\hat Y^\kappa_t, 
    \end{equation}
    where the random time-dependent differential operators $\mathfrak{A}_t$ and $\mathfrak{G}_t=((\mathfrak{G}_t)_1,\dots,(\mathfrak{G}_t)_{d_Y + 2d_W})$ are defined via \begin{align*}
        \mathfrak{A}_t \varphi(x) &= \sum_{i,j=1}^{d_X} \Big( \sum_{\theta=1}^{d_B} \sigma_\theta^i(t,x,Y_t) \sigma_\theta^j(t,x,Y_t) + \sum_{\kappa=1}^{d_Y} \bar f_\kappa^i (t,x,Y_t) \bar f_\kappa^j (t,x,Y_t)  \Big) \partial^2_{ij} \varphi(x) \\
        & \quad + \sum_{i=1}^{d_X} \bar b^i(t,x,Y_t) \partial_i \varphi (x)  \\
        (\mathfrak{G}_t)_\kappa \varphi(x) &= \sum_{i=1}^{d_X} \bar{\mathfrak{f}}_\kappa^i(t,x,Y_t) \partial_i \varphi(x) + \mathfrak{h}_\kappa (t,x,Y_t) \varphi(x) .
    \end{align*}
    In the previous identities we used the fact that, by construction and $\mathbb{P}$-a.s., \begin{align*} 
        \sum_{\kappa,\lambda=1}^{d_Y + 2d_W} \bar{\mathfrak{f}}_\kappa^i (t,x,Y_t) \frac{d \langle \hat Y^\kappa, \hat Y^\lambda \rangle}{dt} \bar{\mathfrak{f}}_\lambda^j (t,x, Y_t) &= \sum_{\kappa=1}^{d_Y} \bar f_\kappa^i (t,x,Y_t) \bar f_\kappa^j(t,x,Y_t) \\ 
        \sum_{\kappa,\lambda=1}^{d_Y + 2d_W} \bar{\mathfrak{f}}_\kappa^i (t,x,Y_t) \frac{d \langle \hat Y^\kappa, \hat Y^\lambda \rangle}{dt} \mathfrak{h}_\lambda^j (t,x, Y_t) &= \sum_{\kappa=1}^{d_Y} \bar f_\kappa^i (t,x,Y_t) h_2^\kappa(t,x,Y_t). 
    \end{align*}
    Recall that we are interested in the unnormalised filter $\mu_t(\omega) (\varphi) := \E(\varphi(X_t) Z_t \mid \mathcal{F}^Y_t)(\omega)$ and that $\mathcal{F}^Y_t \subseteq \mathcal{F}^{\hat Y}_t$.
    Hence, by the tower property of conditional expectations, we have 
    \begin{equation*}
        \mu_t(\omega) (\varphi) = \E( \hat \mu_t (\cdot) (\varphi) \mid \mathcal{F}_t^Y)(\omega).
    \end{equation*}
    This deduces that $\mu_t(\omega)$ solves an equation of the form \eqref{equ:ZakDeg} follows from taking conditional expectation on both sides of \eqref{eq:intermediateZakai_degenerate} and noting that, thanks to \cite[Lemma 3.13]{CP24}, \begin{equation*}
        \E\Big( \int_0^t \sum_{\kappa=d_Y + d_W +1}^{d_Y + 2d_W}  \hat \mu_r((\mathfrak{G}_t)_\kappa \varphi)  \, d\hat Y^\kappa_r \mid \mathcal{F}^Y_t \Big) = 0. 
    \end{equation*}

\section{Rough Kalman--Bucy theory} \label{section:roughKB}
\subsection{Stochastic theory} 
Following \cite{HP88}, \cite[Sec 6.3]{pardoux2006filtrage}, \cite[Sec 8.1]{BC09}, we consider an ``extended Kalman--Bucy'' situation with signal/observation pair $(X,Y)$, with values in $\R^{d_X} \times \R^{d_Y}$, and It\^odynamics
\begin{align}
    dX_t &= (C(t;Y) X_t + c(t;Y)) \, dt + \Sigma(t;Y) \, dB_t + \sum_{\kappa=1}^{d_Y} (F_\kappa(t;Y) X_t + f_\kappa(t;Y)) \, dY^\kappa_t , \label{eq:signal_linear} \\ 
    dY_t &= (kk^\top)(t;Y) ( H (t;Y) X_t  + h (t;Y) ) \,  dt + k(t;Y) \, d B^{\perp}_t.
    \label{eq:obs_linear}  
\end{align}
The stochastic setup is as in Appendix \ref{sec:review}, with $\{(B_t, B^{\perp}_t)\}$ independent Brownian motions under $\mathbb{P}^o$. All coefficients are assumed bounded (locally in time) progressively measurable w.r.t. $\mathcal{F}_t^Y := \sigma(Y_s : s \le t), \ t \ge 0,$ of appropriate dimensions, with $C_t \in \mathrm{Lin}(\R^{d_X}, \R^{d_X}), c_t \in \R^{d_X}, \Sigma_t \in \mathrm{Lin}(\R^{d_B}, \R^{d_X}), 
        F_t \in \mathrm{Lin}(\R^{d_X}, \mathrm{Lin}(\R^{d_Y},\R^{d_X})), f_t \in  \mathrm{Lin}(\R^{d_Y},\R^{d_X}), H_t \in  \mathrm{Lin}(\R^{d_X}, \R^{d_Y}), 
        \bar h_t \in \R^{d_Y}.$
We consider here non-degenerate observation noise,
assuming invertible $k=k_t(y) \in \mathrm{Lin}(\R^{d_Y},\R^{d_Y})$, measurable in $t$, Lipschitz in $Y$. 
We also suppose that $X_0 \sim \mathcal{N} (m_0, v_0)$ is independent of $\{(B_t, B^{\perp}_t)\}$. 
Recall $\mathcal{F}_t^Y := \sigma(Y_s : s \le t)$,
we then have\footnote{$H^\kappa$ denotes the $\kappa$-th row of the matrix $H$.}

\begin{theorem}  
Let Assumptions (A1)-(A6) in \cite{HP88} be in force and assume $k$ invertible and Lipschitz in $Y$. 
Then it holds for all $t \ge 0$ 
and all $\varphi \in
  \mathcal{B}_b(\R^{d_X})$, with probability one,
$$ \mathbb{E}^o \left[ \varphi (X _t)  | \mathcal{F}_t^Y
\right] = \frac{\mathbb{E} \left[ \varphi (X _t) Z _t | \mathcal{F}_t^Y
\right]}{\mathbb{E} \left[ Z_t | \mathcal{F}_t^Y
\right]} =  \mathcal{N} (m _t (\omega), v _t (\omega)) ( \varphi)        
$$
where
\begin{align} 
Z_t 
&= \exp \bigg( \int_0^t [H(s;Y) X_s + h(s;Y)]^\top \, dY_s \notag  \\
& \qquad - \frac12 \int_0^t [H(s;Y) X_s + h(s;Y)]^\top  \frac{d \langle Y \rangle_s}{ds} [H(s;Y) X_s + h(s;Y)] \, ds   \bigg)
\label{eq:exponential_stochasticKalman} 
\end{align}
and $\{(m_t, v_t), \, t \geq 0\}$ is the unique strong solution started from $(m_0,v_0)$ to the following SDE system 
  \be\label{eq:stoch-recatti}
  \begin{split}
    d m_t 
    & = \Big(C (t; Y) m_t + c (t; Y) + \sum_{\kappa,\lambda=1}^{d_Y} F_\kappa (t; Y) v_t H^\lambda(t; Y) \frac{d \langle Y \rangle^{\kappa \lambda}}{dt} \Big) \, dt + \sum_{\kappa=1}^{d_Y} (F_\kappa (t; Y) m_t + f_\kappa (t; Y)) \, dY_t^\kappa \\
    & \quad + \sum_{\kappa=1}^{d_Y} v_t H^\kappa(t; Y)  \left(dY^\kappa_t - \sum_{\lambda=1}^{d_Y} \frac{d \langle Y \rangle ^{\kappa\lambda}}{dt} (H^\lambda (t; Y) m_t + h^\lambda (t; Y))  \, dt \right),\\
    dv_t 
    & = \Big( C (t; Y) v_t + v_t C(t; Y)^\top + \Sigma (t; Y) \Sigma(t; Y)^{\top} + \sum_{\kappa,\lambda=1}^{d_Y} F_\kappa (t; Y) v_t F_\lambda(t; Y)^{\top} \frac{d \langle Y \rangle ^{\kappa \lambda}}{dt} \\ 
    &\quad - v_t H(t; Y)^{\top} \frac{d \langle Y \rangle }{dt} H (t; Y)  v_t \Big) \, dt + \sum_{\kappa=1}^{d_Y} (F_\kappa (t; Y) v_t + v_t F_\kappa (t; Y)^{\top} ) \, dY_t^\kappa.
      \end{split}
  \ee
\end{theorem}

 \begin{proof}
     The theorem in the case $k(t;Y) = I$ (identity matrix) follows from \cite[Theorem 2.1, 2.2]{HP88}.
     As already suggested in \cite[Remark 2.1]{HP88}, since $k$ is non-singular one can recast the problem with $W_t := \int_0^t k(s,Y)^{-1}\, dY_s$, noting that $Y$ and $W$ generate the same  filtration. 
 \end{proof}

\subsection{Rough Riccati equation} \label{sec:RRE}

We start with a self-contained discussion of the (Riccati) rough differential equations that arises when replacing $Y$ in the previous theorem by a generic rough path. Due to quadratic drift terms,  the well-posedness (non-explosion!) of such equations hinges on the availability of a ``good'' sign, which explains the restriction on the class of rough paths for which ones has global well-posedness. Write $M \succeq 0$ to indicate that $M$ is a symmetric, positive semi-definite matrix.

\begin{theorem} \label{thm:RRDEwell} Let $\BY \in \mathscr{C}^{0,\alpha,1}$ and let $(Q(\cdot \, ; \BY),R(\cdot \, ; \BY)) :[0,T] \to (\R^{d_X} \otimes \R^{d_X}) \times  (\R^{d_Y} \otimes \R^{d_Y})$ locally bounded and measurable, with $Q(t;\BY), R(t;\BY) \succeq 0$ for any $t$. 
 Let $v_0^{\tmmathbf{Y}}$ be a symmetric and positive semi-definite matrix in $\R^{d_X}\otimes\R^{d_X}$. 
  Assume that $[t \mapsto (F(t;\BY), F'(t;\BY))] \in \mathscr{D}^{2 \alpha}_Y$, all other coefficients $(C(\cdot \, ;\BY),
  H(\cdot \, ;\BY))$ locally bounded and measurable in time.
   Then the (Riccati) rough differential equation, 
    \begin{align}
            & d  v_t^{\BY}   =    \Big( C(t;\BY) v^\BY_t + v_t^\BY C(t;\BY)^\top + Q(t;\BY)
            + \sum_{\kappa,\lambda=1}^{d_Y}  F_\kappa(t;\BY) v_t^{\BY} F_\lambda(t;\BY)^\top \dot{[\BY]}_t^{\kappa \lambda} \notag  \\
       & \quad - v_t^{\BY} H(t;\BY)^\top R(t;\BY)
       H(t;\BY) v_t^{\BY} \Big) dr  + 
       \sum_{\kappa=1}^{d_Y} (F_\kappa (t; \BY) v^{\BY}_t + v^{\BY}_t F_\kappa (t; \BY)^{\top} ) \, d\BY_t^\kappa,    \label{eq:roughRiccatiDE}   
        \end{align}
    has a global solution and 
    $v_t^{\tmmathbf{Y}} \succeq 0$ for all $t \ge 0$. Moreover, the solution is locally Lipschitz in its data, notably w.r.t. $\BY \in \mathscr{C}^{0,\alpha,1}$ and $(F, F') \in \mathscr{D}^{2 \alpha}_Y$.
\end{theorem}

\begin{proof}
    To simplify notation, we omit explicit dependence of the coefficients on $\BY$, writing, for example, $C_t$ instead of $C(t;\BY)$. 
  We first treat the (classical) case $F = 0$
  , i.e. 
  \[ \dot{v}_t = C_t v_t + v_t C_t^{\top} + Q_t - v_t H_t^{\top} R_t
     H_t v_t   \] 
  but with arguments chosen carefully with regard to later extension to the rough case, in particular avoiding Grownwall arguments (cf. \cite{hofmanova2016rough}).  
  
  \noindent {\em Step (0): local well-posedness.} It is clear from basic ODE theory
  (coefficients are smooth in the state-variable and measurable in $t$) that
  there there exists a unique maximal solution $v$. 
  
   \noindent{\em Step (1): symmetry and semi-definiteness.} 
   Let $\tilde{v}_t \assign
  v_t^{\top}$. Since $Q_t,R_t$ are symmetric, $\tilde{v}$ satisfies the same
  ODE with $\tilde{v}_0 = v_0^{\top} = v_0$. By uniqueness, $v_t =
  \tilde{v}_t$ on the maximal interval, hence $v_t = v_t^{\top}$. We now
  discuss positivity. 
  Set $K_t \assign \frac12 v_t H_t^{\top} R_t H_t$ 
  and
  rewrite the Riccati equation as
  \[ \dot{v}_t = (C_t - K_t) v_t + v_t  (C_t - K_t)^{\top} + Q_t 
     . \]
  Let $\Psi (t, s)$ solve $\partial_t \Psi = (C_t - K_t) \Psi$, $\Psi (s, s) =
  I \in \R^{d_X}\otimes \R^{d_X}$. Then $\Psi (t, 0) w_0 \Psi (t, 0)^{\top}$ solves the linear problem
  $\dot{w}_t = (C_t - K_t) w_t + w_t  (C_t - K_t)^{\top}$, and by Duhamel's
  principle (a.k.a.\ variation of constants) 
  \[ v_t = \Psi (t, 0) v_0 \Psi (t, 0)^{\top} + \int_0^t \Psi (t, s) Q_s 
     \Psi (t, s)^{\top} ds .\]
   {\noindent}We note that  $\Psi (t, 0) v_0 \Psi (t, 0)^{\top}, \Psi (t, s) Q_s \Psi(t,s)^\top\succeq 0$ because $v_0,Q_s\succeq 0$. 
  Thus the integrand is positive semidefinite and $v_t \succeq 0$
  for all $t$.     
  
  \noindent{\em Step (2): comparison with the linear Lyapunov-flow.}
  Let $S$ solve
  \[ \dot{S}_t = C_t S_t + S_t C_t^{\top} + Q_t, \quad
     S_0 = v_0 \]
  and consider the difference $D_t \assign S_t - v_t$. Subtracting the
  equations gives
  \[ \dot{D}_t = C_t D_t + D_t C_t^{\top} + M_t, \quad D_0 = 0, \quad \]
  with $M_t \assign v_t H_t^{\top} R_t H_t v_t \succeq 0$ by step (1).
  Let $\Phi (t, s)$ solve $\partial_t \Phi = C_t \Phi$, $\Phi (s, s) = I\in \R^{d_X}\otimes \R^{d_X}$. By
  Duhamel's principle,
  \[ D_t = \int_0^t \Phi (t, s) M_s \Phi (t, s)^{\top} ds \succeq 0, \]
  and hence $v_t \preceq S_t$ for all $t$. Since $\| v_t \|_{\tmop{op}}
  \leqslant \tmop{tr} (v_t) \leqslant \tmop{tr} (S_t)$ and the linear
  equation does not explode, this suffices to rule out explosion
  for $v_t$ on any finite intervals.  
  
  \noindent {\em Extension to rough case.} W.l.o.g.\ we can assume $\BY \in \mathscr{C}^{0,\alpha}_g$. Indeed, arguing as in \cref{prop:equivalentformofroughZakai}, one can define $\BY^\circ := \BY + \frac12 (0, \delta [\BY])$, $\circ d\BY:=d \BY^\circ$, and equivalently write \eqref{eq:roughRiccatiDE} as
  \begin{equation} \label{equ:dvt}
  d v_t = (C_t v_t + v_t C_t^{\top} + Q_t  
  - v_t H_t^{\top} R_t H_t v_t) \, d t + (F_t v_t +
     v_t F_t^{\top}) \circ d \BY_t. 
     \end{equation}
  We examine each step above. \tmtextit{Step (0)}: Local well-posedness comes
  from a familiar Picard argument, with integral formulation that requires
  $(F, F') \in \mathscr{D}^{2 \alpha}_Y$ for the rough integral and
  measurability in $t$ for the drift coefficients. Local Lipschitzness of the
  coefficients in the state-variable (at most quadratic in $v$) is obvious. By linearity of
  the rough integrand in the solution, the Picard map is contracting and ones obtains a maximal solution, e.g. \cite[Ex 8.4]{FH20}.
  
  \tmtextit{Step (1):} Symmetry follows from local well-posedness. For
  positivity, consider $\Psi (t, s)$ as solution to the linear RDE (as before the quadratic term with $R$ in \eqref{equ:dvt} is absorbed in $K$)
  \[ d_t \Psi = (C_t - K_t) \Psi \, d t + (F_t, F_t') \Psi \circ d \BY_t,
     \quad \Psi (s, s) = I, \]
  from which
  \[ v_t = \Psi (t, 0) v_0 \Psi (t, 0)^{\top} + \int_0^t \Psi (t, s)  
     Q_s   \Psi (t, s)^{\top} \, ds.
  \]

\noindent  \tmtextit{Step (2)} amounts to compare the affine linear RDE \eqref{equ:dvt} with
  \begin{align*}
      d S_t &= (C_t S_t + S_t C_t^{\top} + Q_t ) \, dt + (F_t S_t + v_t S_t^{\top}) \circ d \BY_t, \quad    S_0 = v_0
  \end{align*}
  and consider the difference $D_t \assign S_t - v_t$. Subtracting the
  equations gives
  \[ d D_t = (C_t D_t + D_t C_t^{\top})  \, d t  + (F_t D_t + v_t D_t^{\top}) \circ d \BY_t + M_t \,   d t, \quad D_0 = 0,
  \]
  with $M_t \assign v_t H_t^{\top} R_t H_t v_t \succeq 0$ by \textit{(1)}.
  Let now
  $\Phi (t, s)$ solve $d_t \Phi = C_t \Phi \, d t + (F_t, F_t') \Phi \circ d \BY_t$ with
  $\Phi (s, s) = I$. By Duhamel,
  \[ D_t = \int_0^t \Phi (t, s) 
     M_s  \Phi (t, s)^{\top} \, ds \succeq 0.  \]
  
 \noindent {\em Stability.} At last, the stated local Lipschitz property follows readily from standard results on RDEs, applicable after localization.
\end{proof}

\subsection{Rough Kalman--Bucy filtering} \label{sec:RKBF}
We rewrite the signal dynamics of \eqref{eq:signal_linear} in rough stochastic form,
\begin{align}
    dX^\BY_t &= (C(t;\BY) X_t^\BY + c(t;\BY)) \, dt + \Sigma(t;\BY) \, dB_t + (F(t;\BY) X^\BY_t + f(t;\BY)) \, d\BY_t , \label{eq:roughsignal_linear} \\  
    X^\BY_0 &= \xi^\BY\in\mathcal{F}_0, \notag
\end{align}
noting that under \cref{assumptions_linear} below this is a linear RSDE 
which admits a unique $L_2$-integrable solution, cf. \cref{def:integrablesolutionsRSDEs} and references thereafter. Here and below, $\BY \in 
    \mathscr{C}^{0,\alpha,1}$ so that $[\BY]$ is continuously differentiable. The rough likelihood process, rough counterpart of \eqref{eq:exponential_stochasticKalman}, is then given by
\begin{multline}
     Z^\BY_t = \exp \bigg( \int_0^t [H(s;\BY) X^\BY_s + h(s;\BY)]^\top \, d\BY_s \\ - \frac12 \int_0^t (H(s;\BY) X^\BY_s + h(s;\BY))^\top \dot{[\BY]}_s (H(s;\BY) X^\BY_s + h(s;\BY)) \, ds \bigg) , 
    \label{eq:roughZ_linear}
\end{multline}
which under \cref{assumptions_linear} is well-defined.\footnote{The stochastic rough integral in the exponent is well-defined, using that $(X^\BY,F(\cdot \, ; \BY) X^\BY +f(\cdot \, ; \BY)) \in \mathbf{D}_Y^{2\alpha} L_2$, as is its product with $(H(\cdot \, ; \BY),H'(\cdot \, ; \BY))\in \mathscr{D}_Y^{2\alpha}\mathrm{Lin} \subset \mathbf{D}_Y^{2\alpha} L_\infty\mathrm{Lin}$. In this case, the stochastic rough integral exists by \cite[Theorem 3.4]{FHL21}. Of course, the integral  $(h(\cdot \, ; \BY),h'(\cdot \, ; \BY))$ against $d\BY$ is a classical deterministic rough integral.}
We note that, in contrast to \cref{sec:RoughFiltering}, integrability of the $Z^\BY$ cannot be obtained by general results of John-Nirenberg type, as previously done in \cref{thm:wellposedness_roughfiltering}. We will overcome this problem below via tools from Wiener-It\^ochaos theory. 

%


\begin{assumption}{L} \label{assumptions_linear}
    For any $\BY \in  \mathscr{C}^{0,\alpha,1}$, the functions 
    \begin{align*}
    (C(\cdot \, ; \BY), c(\cdot \, ; \BY), \Sigma(\cdot \, ; \BY)) &: [0,T] \to \mathrm{Lin}(\R^{d_X}, \R^{d_X}) \times \R^{d_X} \times \mathrm{Lin}(\R^{d_B}, \R^{d_X}) \\
    (F(\cdot \, ; \BY), f(\cdot \, ; \BY)) &: [0,T] \to \mathrm{Lin}(\R^{d_X}, \mathrm{Lin}(\R^{d_Y},\R^{d_X})) \times \mathrm{Lin}(\R^{d_Y},\R^{d_X})  \\
    (H(\cdot \, ; \BY), h(\cdot \, ; \BY)) &: [0,T] \to  \mathrm{Lin}(\R^{d_X}, \R^{d_Y}) \times \R^{d_Y} 
     \end{align*}
    are measurable and bounded. 
    There exist some measurable and bounded functions \begin{align*}
        F'(\cdot \, ; \BY) &: [0,T] \to \mathrm{Lin}(\R^{d_X}, \mathrm{Lin}(\R^{d_Y},\mathrm{Lin}(\R^{d_Y}, \R^{d_X}))) \equiv \mathrm{Lin}(\R^{d_X}, \mathrm{Lin}(\R^{d_Y} \otimes \R^{d_Y}, \R^{d_X}))  \\
        H'(\cdot \, ; \BY) & : [0,T] \to  \mathrm{Lin}(\R^{d_X},\mathrm{Lin}(\R^{d_Y}, \R^{d_Y}))
    \end{align*}
    such that $[t \mapsto(F(t;\BY),F'(t;\BY)] , \, [t \mapsto (H(t;\BY),H'(t;\BY)] \in \mathscr{D}_Y^{2\alpha}\mathrm{Lin}$ in the sense of \cref{def:stochasticcontrolledlinearvectorfield}. 
    Moreover, there exist some measurable and bounded functions $$ (f'(\cdot \, ; \BY) , h' (\cdot \, ; \BY)) : [0,T] \to \mathrm{Lin}(\R^{d_Y},\mathrm{Lin}(\R^{d_Y}, \R^{d_X})) \times \mathrm{Lin}(\R^{d_Y},\R^{d_Y}) $$
    such that $[t \mapsto (f(t;\BY), f'(t;\BY))], \, [t \mapsto (h(t;\BY), h'(t;\BY))] \in \mathscr{D}_Y^{2\alpha}$ in the sense of \cref{def:stochasticcontrolledroughpaths}.
\end{assumption}

\begin{theorem}[Rough Kalman--Bucy filter] \label{thm:RKBf}
  Let \cref{assumptions_linear} hold and assume that $\xi^\BY$ is Gaussian.
  Let $\BY \in \mathscr{C}^{0,\alpha,1}$ with monotone bracket, $[\dot{\tmmathbf{Y}}] \succeq 0$.
  Then for all $t \in [0,T]$ and all $\varphi \in
  \mathcal{B}_b(\R^{d_X})$ we have 
  \be \varsigma^{\BY}_t (\varphi)  \equiv \frac{\mathbb{E} [\varphi (X_t^{\BY})
     Z^{\BY}_t]}{\mathbb{E} [Z^{\BY}_t]} = 
     \mathcal{N}(m^{\BY}_t , v^{\BY}_t ) 
     (\varphi) \label{e:rKBf}
     \ee
  where $\{(m^{\BY}_t, v^{\BY}_t), t \in
  [0,T]\}$ is the unique (global) solution to the RDE system
     \begin{align*}
            &d m^\BY_t =  \Big( C(t;\BY) m^\BY_t + c(t;\BY) + \sum_{\kappa,\lambda=1}^{d_Y} F_\kappa(t;\BY) v_t^\BY H^\lambda(t;\BY) \dot{[\BY]}_t^{\kappa \lambda} \\
        & \quad - v^{\BY}_t H(t;\BY)^\top \dot{[\BY]}_t (H(t;\BY) m_t^\BY + h(t;\BY))  \Big) \, dt +   \big( F(t;\BY) m_t^{\BY } + f(t;\BY) + v^\BY_t H(t;\BY)^\top \big) \, d\BY_t , \\
                   & d  v_t^{\BY}   =    \Big( C(t;\BY) v^\BY_t + v_t^\BY C(t;\BY)^\top + \Sigma(t;\BY) \Sigma(t;\BY)^\top + \sum_{\kappa,\lambda=1}^{d_Y}  F_\kappa(t;\BY) v_t^{\BY} F_\lambda(t;\BY)^\top \dot{[\BY]}_t^{\kappa \lambda}  \\
       & \quad - v_t^{\BY} H(t;\BY)^\top \dot{[\BY]}_t H(t;\BY) v_t^{\BY} \Big)\, dt  + \big(F(t;\BY) v_t^{\BY} + v_t^{\BY} F(t;\BY)^\top  \big) \, d \BY_t, \qquad t \in [0,T] .
        \end{align*}
\end{theorem}

We divide the proof in several steps, and first convince ourselves that the left-hand side of \eqref{e:rKBf} really defines a Gaussian law.

\begin{prop} \label{thm:gaussianity_linear}
Under assumptions of Theorem \ref{thm:RKBf},  $\varsigma^{\BY}_t$ is a Gaussian measure on $\R^{d_X}$, $t \in [0,T]$. 
\end{prop}

\begin{proof} 
We assume $d_X=d_B=1$ for clarity.  The arguments in the general case are similar. 
It is immediate that $X^\BY$ is a continuous Gaussian process, so that, for fixed $t>0$, there are 
deterministic (continuous and square-integrable, respectively) functions $\theta_0, \theta_1$
\[ e^{i \xi X^{\mathbf{Y}}_t} = e^{i \xi \left( \int_0^t \theta_1 (s;{ t}) d B_s
   + \theta_0 (t) \right)} . \]
Also, 
the rough stochastic exponential
$Z_t^{\mathbf{Y}}$,
which (thanks to $[\mathbf{Y}] \neq 0$) involves second chaos terms in the exponent, can be written as
\[ Z_t^{\mathbf{Y}} = \exp \left( \int_0^t { \int_0^t} \psi_2 (r, s;{t}) d B_r d B_s +
   \int_0^t \psi_1 (s{;t}) d B_s + \psi_0 (t) \right), \]
where $\psi_2(r,s{;t})=-\int_0^t (H_\tau \theta_1(\tau{;t}))^\top  \dot{[\BY]}_\tau H_\tau \theta_1 (\tau{;t})d\tau $,  and $\psi_1,\psi_0$ are some deterministic functions.
Let $\Psi_2$ the $L^2$-operator associated to $\psi_2$.
It is evident that $\Psi_2$  is negative,
so that $(I - 2 \Psi_2)$ is strictly positive,
which in turn gives, by Proposition \ref{prop:exp2chaos}, with $h = i \xi \theta_1 + \psi_1$,
\[ \mathbb{E} [e^{i \xi X^{\mathbf{Y}}_t} Z_t^{\mathbf{Y}}] = e^{i \xi
   \theta_0 (t) + \psi_0 (t)} \times {\det}_2 (I - 2 \Psi_2)^{- \frac{1}{2}} 
   \hspace{0.27em} \exp \hspace{-0.17em} \left( \frac{1}{2}  \hspace{0.17em}
   \left\langle (I - 2 \Psi_2)^{- 1} h, \hspace{0.17em} h \right\rangle
   \right) . \]
(See also \cite[Lemma 1]{HP88} for an alternative argument.) It follows immediately that, for fixed $t$, with some second order polynomial
$P_2 (\cdot)$,
\[ \mathbb{E} [e^{i \xi X^{\mathbf{Y}}_t} Z_t^{\mathbf{Y}}] = \exp (P_2
   (\xi)) \Rightarrow \frac{\mathbb{E} [e^{i \xi X^{\mathbf{Y}}_t}
   Z_t^{\mathbf{Y}}]}{\mathbb{E} [Z_t^{\mathbf{Y}}]} = \exp (P_2 (\xi) - P_2
   (0)), \]
which implies the desired Gaussianity.
\end{proof}

The obvious way to obtain dynamic descriptions of mean and covariance comes from the (rough) Kushner--Stratonovich equations, applied with linear and quadratic test functions. 
 This is possible but requires a number of considerations to do with the non-boundedness of the coefficients, also when using the complex exponential as test functions. We instead employ a (somewhat non-standard) approximation argument, assuming at first that $\BY \in \mathrm{s}\mathscr{C}^{0,\alpha;1}$ by which we mean \footnote{Here $C^1$ is the space of continuously differentiable paths. }
 $$
                 \BY = \left(Y, \int \delta Y  \otimes d Y  - \tfrac{1}{2} \delta [\mathbf{Y}]\right), \quad Y \in {C}^1,  [\mathbf{Y}] \in {C}^1 \  \text{with $[\BY]_t^\top = [\BY]_t$},
 $$
 the point being that rough integrals become classical ones: for every $(Z,Z') \in \mathscr{D}_Y^{2\alpha}$,
 $$
 \int (Z_t,Z'_t) \, d \BY_t = \int (Z_t \dot{Y}_t - \tfrac{1}{2} Z'_t \dot{[\BY]}_t) \, d t. 
 $$

 \begin{lem} \label{lem:nga} Let $\alpha \in (1/3,1/2)$. Every $\BY = (Y,\mathbb{Y}) \in \mathscr{C}^{0,\alpha;1}$, non-geometric in general, can be approximated (in $\alpha$-rough path metric) by $\BY^{\varepsilon} = (Y^{\varepsilon},\mathbb{Y}^{\varepsilon}) \in \mathrm{s}\mathscr{C}^{0,\alpha;1}$. Moreover, every $(Z,Z') \in \mathscr{D}_Y^{2\alpha}$ can be approximated (in $2\alpha'$-controlled rough path sense, any $\alpha' < \alpha$) by 
 $(Z_\varepsilon,Z_{\varepsilon}') \in \mathscr{D}_{Y^\varepsilon}^{2\alpha}$. Hence, for any $t>0$ fixed,\footnote{Stronger H\"older type convergence statements hold true but are irrelevant for our purposes.}
 $$ \int_0^t (Z,Z') \,d \BY  = \lim_{\varepsilon \to 0} \int_0^t (Z_\varepsilon,Z_{\varepsilon}') \,d \BY^{\varepsilon}.$$
  \end{lem}
 \begin{proof} A geometric rough path is defined by $\BY^\circ :=  \BY +  \frac{1}{2} (0, \delta [\mathbf{Y}]) \in \mathscr{C}^{0,\alpha}_g$ , necessarily the rough path limit of $\BY^{\circ,\varepsilon} := (Y^{\varepsilon}, \int \delta Y^{\varepsilon}  \otimes d Y^{\varepsilon} )$. Now set $\BY^{\varepsilon}:=  \BY^{\circ,\varepsilon} -  \frac{1}{2} (0, \delta [\mathbf{Y}])$. 
 Obviously $Y^\varepsilon \to Y$ in $\alpha$-H\"older sense, so that the second statement follows from \cite[Ex. 4.8]{FH20}. The conclusion comes from classical stability results for rough integrals.
 \end{proof} 
\begin{proof} [Proof of Theorem \ref{thm:RKBf}] Set $\iota(x) := x$.   By (the proof of) \cref{thm:gaussianity_linear}, we have
\be\label{eq:hmu-anz}
\hat{\mu}_t (\xi) :=  \mu^{\BY}_t ( e^{i
   \xi \iota } ) \equiv \mathbb{E} [e^{i \xi X^{\mathbf{Y}}_t} Z_t^{\mathbf{Y}}]   =  \theta_t e^{i \xi m_t - \frac12
   \xi^2 v_t} = \exp (- \frac12 \xi^2 v_t + i \xi
   m_t + \gamma_t) =: \exp (P^2_t (\xi)) 
\ee
where $P_t^2 (\xi)$ denotes a quadratic in $\xi$, and $\gamma_t = \log \mu_t^\BY(1)$ has some dynamics independent of $\xi$. 
Assuming at first that $\BY \in \mathrm{s}\mathscr{C}^{0,\alpha;1}$, it is easy to see that the maps $t \mapsto v_t,m_t,\gamma_t$ are $\mathcal{C}^1$ in time.
Thus, with classical calculus interpretation of all differentials, 

$$
d \hat{\mu}_t(\xi) = \hat{\mu}_t(\xi) (-\frac12 \xi^2 dv_t + i \xi dm_t + d\gamma_t).
$$
On the other hand, using 
\cref{prop:equivalentformofroughZakai} (with $\circ d\BY_t = d Y_t$ in calculus sense), together with classical Feynman-Kac theory,  we have 
\be\label{eq:strat-zakai}
\hat{\mu}_t(\xi) (-\frac12 \xi^2 dv_t + i \xi dm_t + d\gamma_t)=   \mu_t (\ML^\circ_t e^{i \xi \iota}) \, dt + \mu_t(\MG_t e^{i \xi \iota} ) \,  \circ d\BY_t,
\ee
where, 
{considering scalar objects for simplicity only,}
\begin{align*}
\ML^{\circ}_t : = & \frac{1}{2} \Sigma^2_t \partial_x^2 + \Big( C_t \iota  -  \frac{1}{2} (F^2_t \iota + F'_t \iota) [\dot{\mathbf{Y}}]_t \Big) \partial_x  - \frac{1}{2}  \Big( (H_t \iota )^2 + (H_t F_t \iota  + H'_t \iota ) \Big) [\dot{\mathbf{Y}}]_t,  \\
\MG_t := & F_t \iota \partial_x + H_t \iota .
\end{align*}
Note that
\begin{align*}
\mu_t(\ML^\circ_t e^{i \xi \iota })= & \Big\{- \frac{1}{2} \Sigma^2_t \xi^2 + \Big( C_t -
   \frac{1}{2} (F^2_t + F'_t) [\dot{\mathbf{Y}}]_t \Big) \xi  \partial_{\xi }  + \Big( \frac{1}{2} H^2_t \partial_{\xi}^2
+  i (H_t F_t + H'_t) \partial_{\xi}  \Big) [\dot{\mathbf{Y}}]_t \Big\}  \hat{\mu}_t (\xi)
\end{align*}
where we apply the fact $x \partial_x (e^{i\xi x})= \xi \partial_\xi (e^{i\xi x})$ and $x e^{i\xi x}=-i \partial_\xi (e^{i\xi x})$. In view of \eqref{eq:hmu-anz}, we have
$$
\partial_\xi \hat{\mu}_t(\xi)=  \hmu_t(\xi) (-\xi v_t+ im_t), \ \ \ \partial_\xi^2 \hat{\mu}_t(\xi)=   \hmu_t(\xi) \left( (-\xi v_t + i m_t)^2 -v_t \right).
$$
Then it follows that, 
\begin{multline} \label{eq:muML}
    \mu_t(\ML^\circ_t e^{i \xi \iota })= \Big\{- \frac12 \Sigma^2_t \xi^2 + \left( C_t -
   \frac{1}{2} (F^2_t + F'_t) [\dot{\mathbf{Y}}]_t \right) (-\xi v_t + im_t) \xi\\
   + \frac12 H^2_t \dot{[\BY]}_t \left[(-\xi v_t + i m_t)^2 -v_t \right] + i (H_tF_t+H'_t) [\dot{\mathbf{Y}}]_t (-\xi v_t + im_t) \Big\} \hmu_t(\xi).
\end{multline}
Similarly, we have \begin{align}
    \label{eq:muMG}
\mu_t (\MG_t e^{i \xi \iota}) &= (F_t \xi - i H_t)(-\xi v_t + i m_t) \hmu_t(\xi) . 
\end{align}
Substitute \eqref{eq:muML} and \eqref{eq:muMG} to \eqref{eq:strat-zakai}, and compare the coefficient of $\xi^2$ term. Then we obtain 
\begin{align*}
dv_t &=  \left( \Sigma_t^2 + (2C_t-(F^2+F') \dot{[\BY]}_t ) v_t - H^2 \dot{[\BY]}_t v_t^2 \right) dt + 2 F_t v_t \circ d\BY_t \\
   &=  \left( \Sigma_t^2 + (2C_t + F^2 \dot{[\BY]}_t ) v_t - H^2 \dot{[\BY]}_t v_t^2 \right) dt + 2 F_t v_t d\BY_t.
\end{align*}
We leave it to the reader to spell out the same computation in a multivariate setting, such as to obtain the exact RDE stated for $v^\BY$ in Theorem \ref{thm:RKBf}.
   \begin{align*}
            & d  v_t^{\BY}   =    \Big( C(t;\BY) v^\BY_t + v_t^\BY C(t;\BY)^\top + \Sigma(t;\BY) \Sigma(t;\BY)^\top + \sum_{\kappa,\lambda=1}^{d_Y}  F_\kappa(t;\BY) v_t^{\BY} F_\lambda(t;\BY)^\top \dot{[\BY]}_t^{\kappa \lambda}  \\
       & \quad - v_t^{\BY} H(t;\BY)^\top \dot{[\BY]}_t H(t;\BY) v_t^{\BY} \Big) dr  + \big(F(t;\BY) v_t^{\BY} + v_t^{\BY} F(t;\BY)^\top  \big) d \BY_t, \qquad t \in [0,T] .
        \end{align*}

Since this (Riccati) RDE is well-posed for all $\BY \in \mathrm{s}\mathscr{C}^{0,\alpha,1}, Q:= \Sigma \Sigma^\top, R:= \dot{[\BY]} $, by Theorem \ref{thm:RRDEwell}, and $\mathrm{s}\mathscr{C}^{0,\alpha,1}$ is dense in $\mathscr{C}^{0,\alpha,1}$, conclude with an approximation argument, using stability of Riccati RDE together with Lemma \ref{lem:nga}.
The RDE for the mean $m_t^{\BY}$ is derived by similar arguments, left to the reader. (This situation is easier since the RDE for the mean $m^\BY$ is affine linear, which comes with a priori well-posedness results.) 
\end{proof}
 


{\em Consistency with stochastic theory.} 
We conclude this section by noting that randomization of the driving rough path in our rough Riccati equations, with the It\^o rough path lift of the observation process $Y=Y(\omega)$ as given in \eqref{eq:obs_linear}  yields a non-geometric (random) rough path $\BY(\omega)$ with $\dot{[\BY]}_t =  k(t;Y) k(t;Y)^\top$. 
Recalling consistency of (It\^o randomized) rough integration with It\^o integration, \cite[Prop. 5.1]{FH20}, it is then clear that the randomized solutions to the system of (Riccati) RDEs of Theorem \eqref{thm:RKBf} are indistinguishable from the ODE (Riccati) solutions previously stated in \eqref{eq:stoch-recatti}. Such randomizations are more involved in the general, infinite-dimensional, setting which is the topic of the next section.

\section{Connecting rough and stochastic filtering} \label{section:bridgingroughandstochasticfiltering}

\footnote{See also the end of Sections \ref{subsec:degen-case} and \ref{sec:RKBF} for consistency discussions in the context of degenerate observation noise and Kalman--Bucy, respectively.} %
We focus here on the (ND, i.e. $k$ invertible) case, with $\mathbb{P}$-dynamics given by  \eqref{eq:stoch-sig-2}, \eqref{eq:stoch-obs-2} and \eqref{eq:stoch-z-2}. 
The corresponding rough dynamics are given by \eqref{eq:roughSDE_filtering} and \eqref{eq:Girsanovexponential_filtering}.

In this section, we show that properly randomized, all rough objects become the ``right'' stochastic objects. Though not strictly necessary,\footnote{See \cite[Theorem 7]{CDFO13} or \cite[Theorem 1]{BC09} } 
we shall assume here that

\be\label{eq:probab}
( {\Omega}, \MF,  (\MF_t)_t, \PP )  = (\Omega', \MF', (\MF'_t)_t, \PP') \otimes (\Omega'', \MF'', (\MF''_t)_t,   \PP'')
\ee
where $(\Omega', \MF', (\MF'_t)_t  ,\PP')$ is a complete probability space supporting a $d_B$-dimensional Brownian motion $B$, similar for $(\Omega'', \MF'', (\MF''_t)_t  ,\PP'')$ with a $d_W$-dimensional Brownian motion $W$. In the following we take $\mathscr{C}_T= \SC^{0,\alpha,1}$ with $\alpha \in (1/3,1/2)$ and equip it with the Borel sigma algebra $\mathfrak{C}_T$. We consider the randomization of $\BY \in \mathscr{C}_T$ on $(\Omega'', \MF'', \PP'')$.

\begin{assumption}{CME}\label{assum:CME} Assume \cref{assum:E} holds and\\
(i). $( b, \sigma, (f,f'), (h,h') )$ as functions of $(t,x,\mathbf{Y})$ are 
$\MB_{[0,t]} \otimes \MB({\mathbb{R}^{d_X} }) \otimes 
\mathfrak{C}_T /{\MB}(\mathbb{R}^{d'})$-measurable and 
uniformly bounded, with appropriate choices of $d'$;\\
(ii). all coefficient functions above are causal in $\mathbf{Y}$, i.e. for any $\phi= b,\sigma, f,f',h,h',$ $\phi(t,x,\BY)= \phi(t,x, \BY^{t})$, where $\BY^{t}_s := \BY_{t\wedge s}$.\\


\end{assumption}


\begin{assumption}{OWP} (Observation, well-posed)
\label{assum:R} Assume strong well-posedness of 
\be\label{eq:stoch-signal}
Y_t(\ome'')= \int_0^t k(r,Y_r) dW_r(\ome'').
\ee
\end{assumption}
\noindent (Of course, the usual Lipschitz condition on $k$ as contained in \cref{assumptionCPE} is sufficient.)
\medskip



By assumption, $Y$ is then a local martingale, hence (\cite{friz2023rough} and references therein) admits a rough path lift, $\BY^{\text{It\^o}}(\ome'') :=(Y,\Y^{\text{It\^o}} )(\ome'')$ be the It\^o (martingale) rough path, with 
\be\label{eq:def-rand-Y}
\Y^{\text{It\^o}}_{s,t}(\ome''):= (\int_s^t \delta Y_{s,r} \otimes dY_r)(\ome''), \ \ \forall \  0 \le s \le t \le T.
\ee
Note that $\mathbf{Y}^{\text{It\^o}}_t$ is $\MF''_t$-measurable and $\MF^{\mathbf{Y}^{\text{It\^o}}}_t \subset \MF_t''$.






\subsection{Stochastic vs rough Kallianpur--Striebel formula}

Recall, by definition, for any bounded Borel measurable function $\varphi $ on $\R^{d_X},$
\be \label{eq:rough-filters}
 \mu^{\mathbf{Y}}_t (\varphi)
=
\mathbb{E}  [ \varphi(X^{\mathbf{Y}}_t ) 
   Z^{\mathbf{Y}}_t ], \quad   \varsigma^{\mathbf{Y}}_t (\varphi)  = \frac{  \mu^{\mathbf{Y}}_t (\varphi) }{  \mu^{\mathbf{Y}}_t (1) },
\ee
where $(X^{\BY}, Z^{\BY})$ is the solution of rough SDE \eqref{eq:roughSDE_filtering}, \eqref{eq:Girsanovexponential_filtering}, with $(\Omega, \MF, \PP)$ there replaced by $(\Omega', \MF', \PP')$.

\begin{lem}\label{lem:jt-meausrable} Under \cref{assum:CME}, \\
(i) there is a $\MF'_t \otimes \mathfrak{C}_t$-progressively measurable version of $(X^{\mathbf{Y}}, Z^{\mathbf{Y}})$;\\
(ii) for any $t \ge 0,$ we also get $\mathfrak{C}_T \otimes \MB_{[0,t]}$-measurable mapping  $(\mathbf{Y},r) \mapsto \mu^{\mathbf{Y}}_r \in \mathcal{M}_F$. Similar for $\varsigma^{\mathbf{Y}}_t$ as a functional with values in $\mathcal P$.

\end{lem}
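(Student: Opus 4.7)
The plan is to establish (i) via a Picard iteration carried out jointly in $(\omega', \BY) \in \Omega' \times \SC_T$, using the causality of the coefficients to keep the iterates $\MF'_t \otimes \FC_t$-progressive at every step, and then to derive (ii) from (i) by Fubini combined with the separability of $(\mathcal{M}_F(\R^{d_X}), \text{weak})$.

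For (i), start from $(X^{(0)}, (X^{(0)})') := (X_0^\BY, 0)$ and iterate by plugging into the dynamics \eqref{eq:roughSDE_filtering}. At each step: the Lebesgue term $\int_0^t b_r(X^{(n)}_r;\BY)\,dr$ is jointly progressive by Fubini and the joint measurability in \cref{assum:CME}; the It\^o term $\int_0^t \sigma_r(X^{(n)}_r;\BY)\,dB_r$ admits a jointly $\MF'_t \otimes \FC_t$-progressive version as the $L_2(\Omega')$-limit of jointly progressive Riemann--It\^o sums over a fixed dyadic partition sequence $(\pi_k)$; and the rough integral is constructed as the almost-sure sewing limit of compensated Riemann sums
\[
\sum_{[s,u]\in\pi_k}\bigl(f_s(X^{(n)}_s;\BY)\,\delta Y_{s,u} + [(Df_s)f_s + f'_s](X^{(n)}_s;\BY)\,\mathbb{Y}_{s,u}\bigr),
\]
whose summands are jointly $\MF'_u \otimes \FC_u$-measurable by \cref{assum:CME} and causality in $\BY$. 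By \cite[Theorem 4.6]{FHL21} the iterates converge for every fixed $\BY$ in the relevant Banach space, and extracting a pointwise-convergent subsequence yields a jointly progressive version of $X^\BY$; its Gubinelli derivative $f_t(X^\BY_t;\BY)$ is jointly measurable by composition. The Girsanov exponential $Z^\BY$ is handled identically via \eqref{eq:Girsanovexponential_filtering}, or equivalently via the representation $Z^\BY = e^{I^\BY}$ from \cref{lem:exponential_I}, which preserves joint measurability since $I^\BY$ is built from the same measurable ingredients.

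For (ii), (i) implies that $(\omega', r, \BY) \mapsto \varphi(X^\BY_r(\omega'))\,Z^\BY_r(\omega')$ is jointly measurable for every bounded Borel $\varphi$. Combined with uniform integrability $\sup_{r \in [0,T]} \E'[(Z^\BY_r)^2] < \infty$, locally uniformly in $\BY$ (cf.\ \cref{lem:exponential_I} and \cite[Theorem 3.10]{BCN24}), Fubini delivers that $(\BY, r) \mapsto \mu^\BY_r(\varphi)$ is $\FC_T \otimes \MB_{[0,t]}$-measurable. Since $\mathcal{M}_F(\R^{d_X})$ endowed with the weak topology is Polish and its Borel $\sigma$-algebra is generated by the evaluation maps $\mu \mapsto \mu(\varphi)$ for $\varphi \in C^0_b$, the measure-valued map $(\BY, r) \mapsto \mu^\BY_r$ itself is $\FC_T \otimes \MB_{[0,t]}$-measurable. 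The normalised filter $\varsigma^\BY_r = \mu^\BY_r/\mu^\BY_r(1)$ then inherits measurability as a ratio with a.s.\ positive, measurable denominator.

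The main obstacle is the joint measurability of the rough stochastic integral: the pathwise sewing-lemma construction is intrinsically an almost-sure limit for each fixed $\BY$, which a priori does not commit to a version jointly measurable in $\BY$. The remedy is to fix a single dyadic partition sequence throughout the Picard scheme, making each compensated Riemann sum manifestly jointly measurable; the sewing bound being locally uniform in $\BY$ then forces the limit to exist and to be jointly measurable on the whole product space.
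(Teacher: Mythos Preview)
Your approach is correct and is essentially a self-contained sketch of what the paper obtains by citing \cite[Theorem 3.4]{FLZ25}: the paper's proof of (i) is literally ``check the conditions of \cite[Theorem 3.4]{FLZ25}'', whereas you reconstruct the Picard scheme with joint measurability preserved at every step, which is precisely how that external theorem is proved. Likewise for (ii), the paper says ``follows from (i) and integrability of $Z^\BY$'', and you spell out the Fubini step and the fact that the Borel structure on $(\mathcal{M}_F,\text{weak})$ is generated by evaluations. One point of caution: when you say the rough integral is ``the almost-sure sewing limit of compensated Riemann sums'', this is not the construction in \cite{FHL21}---the rough \emph{stochastic} integral is built via the stochastic sewing lemma, with convergence in $L_p$ governed by conditional-expectation bounds, not by the classical (pathwise) sewing lemma. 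Your remedy in the last paragraph is still the right one (fix a single dyadic sequence, use that the stochastic-sewing rate is summable and locally uniform in $\BY$, then take the jointly measurable $\limsup$ as the version), but the argument should be phrased via $L_p$-Cauchy estimates rather than a pathwise sewing bound. With that adjustment, your proof is a faithful unpacking of the reference the paper invokes.
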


\begin{proof}
To prove (i), one only needs to check conditions from \cite[Theorem 3.4]{FLZ25} are satisfied under \cref{assum:CME}. (ii) follows by (i) and integrability of $Z^{\BY},$ which is proved in \cref{thm:wellposedness_roughfiltering}. 
\end{proof}
By measurability of $\ome'' \mapsto \BY^{\text{It\^o}}(\ome'')$
together with the previous lemma it is clear that 
\be\label{eq:randomised-sys+weight}
(\bar{X}, \bar{Z})_t (\ome',\ome''):= (X^{\BY}, Z^{\BY} )_t (\ome')|_{\BY=\BY^{\text{It\^o}}(\ome'')}
\ee
defines measurable processes. To connect with the original stochastic filtering problem, consider the It\^o SDE on $(\Omega, \MF, \PP)$,
\be\label{eq:rand-Ito-sde}
\begin{split}
 & d  {X}_t = b_{t, \omega} ( {X}_t) d t +  \sigma_{t,\ome}(X_t ) dB_t + f_{t,\ome} ({X_t})dY_t, \ X_0= X_0 (\ome');\\
 & dZ_t =Z_t h_{t,\ome} (X_t) d Y_t, \ Z_0=1,
\end{split}
\ee
with $\phi_{t, \omega} (x) = \phi (t, x ; \BY^{\tmop{It\hat{o}}}_{. \wedge t}
  (\omega''))$, $\phi=b,\sigma, f, h$. 
  Under \cref{assum:CME}, standard theory (e.g. \cite{Protter}) ensures a unique strong solution, denoted by $(X,Z)$.

\begin{theorem}\label{thm:mu_as_process}
Suppose \cref{assum:CME} and \cref{assum:R} hold. Then there exists a continuous modification of $(\bar X, \bar Z)$, denoted also by $(\bar X, \bar Z)$ such that
$(X,Z)$ and $(\bar X, \bar Z) $ are indistinguishable on $(\Omega, \MF, \PP).$ In particular, $  \bar Z_t$ is integrable for any $t\in [0,T].$ Moreover, let $\BY(\ome):= \BY^{\text{It\^o}}(\ome'')$ be given by \eqref{eq:def-rand-Y}, and we have

\begin{itemize}

\item[(i.1)] $\bar{\mu}_t (\omega) \assign \mu^{\mathbf{Y}}_t |
  \nobracket_{\mathbf{Y} = \mathbf{Y} (\omega'') }$ defines a
  $\mathcal{M}_F$-valued r.v.\ such that for any bounded measurable function $\varphi,$ a.s.
  \begin{equation} \label{e:42i1}
  \mathbb{E} [\varphi (\bar{X}_t) \bar{Z}_t |\mathcal{F}^{Y}_t \nobracket] = \nobracket  
     \mu^{\mathbf{Y}}_t (\varphi) |_{\mathbf{Y} =
     \mathbf{Y}  (\omega)} =     \bar{\mu}_t (\omega) (\varphi); 
     \end{equation} 

\item[(i.2)] the measure valued process $\{ \bar{\mu}_t : t \ge 0 \}$ is weakly continuous in $\mathcal{M}_F$ a.s., in the sense that for almost surely every $\ome,$ for any $\varphi \in \mathcal{C}^0_b(\R^{d_X};\R)$, the map $[0,T] \ni t \mapsto \bar{\mu}_t(\varphi)(\ome)  \in \R$ is continuous;
 
 \item[(ii.1)] Similarly, 
 $\bar{\varsigma}_t (\omega) \assign \varsigma^{\mathbf{Y}}_t |
  \nobracket_{\mathbf{Y} = \mathbf{Y}  (\omega)}$ defines a
  $\mathcal{P}$--valued r.v. such that a.s.
 \begin{equation} \label{e:42ii}
     \varsigma^{\mathbf{Y}}_t (\varphi)
     |_{\mathbf{Y} =
     \mathbf{Y} (\omega)} 
  =  
     \bar \varsigma_t (\omega) (\varphi);
\end{equation}

\item[(ii.2)] the measure valued process $\{ \bar{\varsigma}_t : t \ge 0 \}$ is weakly continuous in $\mathcal{M}_F$.

\end{itemize}

\end{theorem}

\begin{proof} 

According to \cite[Theorem 3.9]{FLZ25}, we see that there exists a continuous modification of $(\bar X, \bar Z)$, such that
$(X,Z)$ and $(\bar X, \bar Z) $ are indistinguishable. Moreover, by \cref{assum:CME} and \cref{assum:R}, we see that 
$
 \E [\sup_{t\in [0,T]} |Z_t|^p]< \infty, \ \ \forall \ p\ge 1,
$
which implies the integrability of $  \bar Z.$

(i.1) Clearly $\BY \mapsto \mu_t^{\BY} \in \mathcal{M}_F
(\mathbb{R}^d)$ is a \tmtextit{kernel} (terminology from e.g. \cite[Chapter 3]{Kal97}) from rough path space $\mathscr{C}$ to $\mathbb{R}^d$, both equipped with their Borel sets,
in the sense that $\BY \mapsto \mu_t^{\BY} (B)$ is
measurable, for any fixed Borel set $B$ in $\mathbb{R}^d$. By composition we
see that the randomization $\ome \mapsto \mu_t^{\tmmathbf{Y}} |_{\tmmathbf{Y} =
\tmmathbf{Y}  (\omega)} \nobracket \backassign \bar{\mu}_t (\ome)$ also gives
rise to a kernel, now from $(\Omega, \mathcal{F})$ to $\mathbb{R}^d$. Then according to \cite[Proposition 5.7]{MR3752669}, $\bar{\mu}_t (\omega)$ is a $\mathcal{M}_F$-valued random variable, when $\mathcal{M}_F$ is equipped with the weak topology.

For identities \eqref{e:42i1}, we immediately see the second one by definition. For the first one, by the definition \eqref{eq:randomised-sys+weight}, we have 
$\text{Law}((\bar X_t, \bar Z_t)|\MF^Y_t)= \text{Law}((X^{\BY}_t, Z^{\BY}_t))|_{\BY=\BY(\ome) }$, which means for any bounded measurable function $F: \R^{d_X} \times \R \rightarrow \R$, 
$$
\E[F(\bar X_t, \bar Z_t)|\MF^{Y}_t ](\ome) = \E[F(X^{\BY}_t, {Z}^{\BY}_t) ]|_{\BY=\BY(\ome) }.
$$
Then by a standard cutoff argument and the dominated convergence theorem, for any measurable function $F$ with $\E[|F(\bar X_t, \bar Z_t)| ]<\infty,$ the above identity also holds, which implies the first identity of \eqref{e:42i1}. 

(i.2) For any $\BY=\BY(\ome'')$, according to \cref{thm:existence} in the sense of \cref{def:solutionroughZakai}, we have that $\mu^{\BY}$ is a solution to \eqref{eq:roughZakai_CITE}, which implies the continuity we claim.

(ii) Similarly, just noting that 
$\tmmathbf{Y} \mapsto \varsigma_t^{\tmmathbf{Y}} \in \mathcal{M}_F
(\mathbb{R}^d)$ is a \tmtextit{probability kernel}, which as before leads to the statement that its randomization is a 
$\mathcal{P}$--valued random variable.
\end{proof}

To retrieve the original probability measure $\PP^o$ of the stochastic optimal filtering, let 
\be\label{def:ori-Po}
d \PP^o :=   Z_T   d\PP,
\ee
where $Z$ is given by It\^o SDE \eqref{eq:rand-Ito-sde}. 
Then we have the following rough stochastic Kallianpur--Striebel formula.

\begin{thm} \label{thm:RoughFilterFinal}
Suppose \cref{assum:CME} and \cref{assum:R} hold. Then we have for any $\varphi \in \mathcal{C}^0_b(\R^{d_X};\R)$,   
$$
\E^{\PP^o} [\varphi(  X_t)| \MF^{Y}_t] =  
      \varsigma^{\mathbf{Y}}_t (\varphi)
     |_{\mathbf{Y} =
     \mathbf{Y} (\omega)} = \frac{  \mu^{\mathbf{Y}}_t (\varphi) }{  \mu^{\mathbf{Y}}_t ( 1 )} \Big|_{\mathbf{Y}=
     \mathbf{Y} (\omega)}, \ \ \PP(\PP^o)\text{-a.s.}
$$

\end{thm}


\begin{proof}
Note that $\E[  Z_t]\equiv 1,$ $\forall \ t\in[0,T].$ We see that $  Z$ is a martingale on $(\Omega, \MF, \PP)$, and thus $\PP^o$ is a probability measure. Then by Bayes' formula and \cref{thm:mu_as_process},
$$
\E^{\PP^o}[\varphi(  X_t) | \MF^{Y}_t]= \frac{\E [\varphi(\bar X_t) \bar Z_t | \MF^{Y}_t]}{\E [\bar Z_t | \MF^{Y}_t]}  = \frac{  \mu^{\mathbf{Y}}_t (\varphi)}{  \mu^{\mathbf{Y}}_t ( 1)  } \Big|_{\mathbf{Y}=
     \mathbf{Y} (\omega)}. 
$$

\end{proof}




  

\begin{remark}

In the particular case $b_{t, \omega} (x) = b (t, x ; Y_t (\omega))$,
  where $Y_t (\omega) = \pi_1 \tmmathbf{Y}^{\tmop{It\hat{o}}}_t (\omega)$, we are in
  the setting of \cite{CP24} and see that our $(\bar{X}, \bar{Z})$ satisfies the SDE
  dynamics for $X, Z$ (see equations \eqref{eq:stoch-sig-2} and \eqref{eq:stoch-z-2}). Since \cref{assumptionCPE} guarantees well-posedness
  of these dynamics, we conclude that our randomized objects are
  indistinguishable from the stochastic objects considered in \cite{CP24}. One can then
  safely remove all bars in the the abvove robust representation formulae.     
\end{remark}


\begin{remark} (i) Following Lemma 3.7 and Remark 3.9 in \cite{CP24}, also Chapter 2 in \cite{BC09}, it is a non-trivial technical task to obtain a $\mathcal{M}_F$-valued process $\mu_t$ (and even a $\mathcal{M}_F$-valued random variable $\mu_t$ for fixed $t$)  such that
\begin{equation}
   \label{equ:CP_Rmk39} 
  \mu_t (\omega) ( \varphi) :=
  \mathbb{E} [\varphi (X_t) Z_t |\mathcal{F}^Y_t \nobracket] = \nobracket 
     \mu^{\mathbf{Y}}_t ( \varphi)  |_{\mathbf{Y} =
     \mathbf{Y}^{\tmop{It\hat{o}}} (\omega).}
 \end{equation}
Theorem \ref{thm:mu_as_process} provides such a process (and implicitly a weakly  continuous version of any $\mu$ for which whenever \eqref{equ:CP_Rmk39} holds).
 
(ii)  For fixed $t$, the random measure $\varsigma_t$ is an explicit construction of the regular conditional distribution of $X_t$, under $\mathbb{P}^o$, given $\mathcal{F}^Y_t$. The understanding of  $\varsigma_t$
as a process, achieved en passant in Theorem \ref{thm:mu_as_process},
is an important (if technical) aspect in filtering theory, cf.  Chapter 2 in \cite{BC09}.
\end{remark}

\subsection{Stochastic vs rough filtering equations}
\label{sec:SvsRFW}
In this subsection, we talk about the relation between stochastic Zakai equation \eqref{equ:ZakNonDeg} with our rough Zakai equation \eqref{eq:roughZakai_CITE}. Here we only talk about the Zakai equations since for the Kushner--Stratonovich equations, the argument is the same.

In the following, we assume all the coefficients $\phi(t,x,\BY)= \phi(t,x,Y_t),$ for $\phi=b,\sigma, f, h.$ Let $\mu$ be given by \eqref{equ:CP_Rmk39}

\begin{theorem} 
Let \cref{assum:CME} and \cref{assum:R} hold. Let $\mu^{\BY}$is be a solution to the rough Zakai equation in the sense of Definition \ref{def:solutionroughZakai}.
Then its randomization $\bar{\mu}$ is a solution to the classical Zakai equation, in the sense of \eqref{equ:ZakNonDeg} when the test function is choosen to be in $\mathcal{C}^{3}_b$. 
\end{theorem}
\begin{remark} (i) As noted in  \cref{rem:CP24_23}, one can switch from $\mathcal{C}^{3}_b$ to ${C}^2_b$, the classical space of bounded, twice continuously differentiable functions with bounded derivatives, and allow for time-dependent test functions. (ii) Strictly speaking, in \eqref{equ:ZakNonDeg} coefficients are of the form $\phi (t,x,Y_t)$ which amounts to a very explicit form of causality; our setup clearly allows to deal with more general form of the classical Zakai equation but to stay aligned with \cite{CP24} will not not discuss this further here.
\end{remark}

\begin{proof} 
For any test function $\varphi \in \mathcal{C}^3_b (\mathbb{R}^{d_X}
  ; \mathbb{R})$,
by Proposition \ref{prop:roughZakai_integralform} we have the integral equation

\[ \mu^\BY_t (\varphi) = \mu^\BY_0 (\varphi) + \int_0^t   \mu^\BY_r
      (A^\BY_r \varphi) \, dr + \int_0^t \mu^\BY_r (\Gamma_r^\BY \varphi) \, d\BY_r,
\]
where $(\mu^\BY  (\Gamma^\BY \varphi), \ \mu(\Gamma^{\BY} \Gamma^{\BY}\varphi+ (\Gamma^{\BY})' \varphi)) \in \SD^{2 \alpha}_Y$ is a deterministic $Y$-controlled rough path. Note that for $\phi(t,\BY)  = \mu^\BY_t
      (A^\BY_t \varphi),\  \mu^\BY_t (\Gamma_t^\BY \varphi),\  \mu_t(\Gamma^{\BY}_t \Gamma^{\BY}_t \varphi+ (\Gamma^{\BY}_t)' \varphi)$, $\phi:[0,T] \times \SC_T \rightarrow \R$ is measurable and causal. Besides, $\mu^{\BY}_0(\varphi)= \E[\varphi(X_0)]= \E'[\varphi(X_0(\ome'))]$ for any $\BY \in \SC_T.$ Then according to \cite[Theorem 3.9]{FLZ25}, we have 
\be\label{eq:eqiv}
\mu^{\BY}_t(\varphi)|_{\BY=\BY(\ome'')} = \E[\varphi(X_0)] + \int_0^t \overline{\mu_t^{\BY}
      (A_t^{\BY} \varphi)} \ dr + \int_0^t \overline{\mu_t^{\BY} (\Gamma_t^{\BY} \varphi)} \ d Y_r, 
\ee
where $\overline{ \phi_t^{\BY} } := \phi(t,{\BY})|_{\BY=\BY(\ome'')}= \phi(t,Y)|_{Y=Y(\ome'')}$ by \cref{assum:CME} (i). Note that for any measurable $F:[0,T] \times \R^{d_X} \times \SC_T \rightarrow \R,$ by \cref{thm:mu_as_process} (i.1), 
$$
\overline{\mu_t^{\BY}(F(t,\cdot, \BY)) }= \overline{ \E'[F(t,X^{\BY}_t, \BY) Z_t^{\BY}] }  = \E'[F(t,X^{\BY}_t, \BY) Z_t^{\BY} ]|_{\BY=\BY(\ome'')}=\langle \bar\mu_t(\ome'') , F(t,\cdot, \BY(\ome'')) \rangle.
$$
Then the right hand side of \eqref{eq:eqiv} is exactly the right hand side of \eqref{equ:ZakNonDeg}, which implies our conclusion.



  
\end{proof}

\begin{remark}
    One similarly shows that the randomized solution to the rough Kushner--Stratonovich equation \eqref{eq:roughKushnerStratonovich_CITE} solves the stochastic Kushner--Stratonovich equation \eqref{eq:KushnerStratonovich_CP24-nd}.
\end{remark}

\appendix

\section{Review on Stochastic filtering of diffusion processes} \label{sec:review}

\subsection{Stochastic dynamics of signal and observation }

Let $(\Omega, \mathcal{F}, (\mathcal{F}_t)_{t \in [0, T]}, \mathbb{P}^o)$ be a
filtered probability space, equipped with complete and independent
sub-filtrations $(\mathcal{F}^B_t), (\mathcal{F}^{B^{\perp}}_t)$, which
supports independent Brownian motions $B$ and $B^{\perp}$, of respective
dimension $d_B$ and $d_{W}$. 
One is given, under the {\em original} measure $\mathbb{P}^o$, joint dynamics of a signal process $X$ and observation process
$Y$, of respective dimensions $d_X$ and $d_Y$, 
\begin{eqnarray}\label{eq:stoch-sig}
 && d X_t (\omega)  =  \bar{b} (t, X_t, Y_t) d t + \sigma (t, X_t, Y_t) d B_t
  (\omega)  + \bar{f} (t, X_t, Y_t) d B^{\perp}_t (\omega)\\ \label{eq:stoch-obs}
 && d Y_t (\omega)  =  \bar{h} (t, X_t, Y_t) dt + k (t, Y_t) d B^{\perp}_t
  (\omega)
\end{eqnarray}
Conditions on the coefficients fields (of Lipschitz type) are given later and
will in particular guarantee strong well-posedness of this SDE system. (We will soon transition to a system in which all bar-decorated coefficients above are replaced by their unbarred counterparts, thereby minimizing the use of bar notation.)





\begin{definition}[Filtering Problem]
  The filtering problem consists in determining the conditional distribution
  $\varsigma_t$ of the signal $X$ at time $t$ given the information
  accumulated from observing $Y$ in the interval $[0, t]$; that is, for any
  $\varphi \in \mathcal{B}_b (\mathbb{R}^{d_X})$,
  \begin{equation} \label{defofvarsigma}
      \varsigma_t (\varphi) =\mathbb{E}^o [\varphi (X_t) \mid \mathcal{F}^Y_t],
  \end{equation} 
  where $(\mathcal{F}^Y_t)$ is the 
 observation filtration.
\end{definition}

Following \cite{CP24}, we assume
 $\bar{h} (t, x, y) = h_1 (t, y) + k (t, y) h_2 (t, x, y) $
which allows to absorb $h_2$ in 
\be \label{eq:W}
W_t : = \int_0^t h_2 (r, X_r, Y_r) dr + B^{\perp}_t . 
\ee
We make the standing assumption 
\begin{assumption}{CP-M} \label{assumptionCPM}
\[ \mathbb{E} \tilde Z_t = 1, \ \  t\in [0,T], \quad   \text{where } \tilde Z_t \assign \exp \left( - \int_0^t h_2 (r, X_r, Y_r)^{\top} \hspace{0.17em}
   \mathrm{d} B^{\perp}_r - \frac{1}{2}  \int_0^t \lvert h_2 (r, X_r, Y_r) \rvert^2 d
   r \right) \hspace{0.17em} . \]
\end{assumption}
By a Girsanov change of measure, $W$ then becomes a new Brownian motion, independent of $B$. 
%
Specify the probability $\mathbb{P}(\sim \mathbb{P}^o)$ via its Radon-Nikodym
derivative $Z_T (> 0)$, i.e.
\be \label{def:PP}
\frac{\mathrm{d} \mathbb{P}}{\mathrm{d} \mathbb{P}^o} \Big|_{\mathcal{F}_T} =
   \tilde{Z}_T, 
\ee
the inverse of which is given by
\begin{eqnarray}\label{eq:stoch-I}
  Z_T := \exp (I_T)  : =  \exp \left( \int_0^T h_2 (r, X_r, Y_r)^{\top} \hspace{0.17em}
  \mathrm{d} W_r - \frac{1}{2}  \int_0^T \lvert h_2 (r, X_r, Y_r) \rvert^2 d r
  \right) .
\end{eqnarray}

To proceed, we want to express $d B^{\perp}_t = d W_t - h_2 dt$ in \eqref{eq:stoch-sig}, \eqref{eq:stoch-obs} 
in terms of $d Y $=$h_1 d s + k dW$,
to the extent possible. In general, we can write\footnote{The good choice of
$k^+$ turns out to be the Moore-Penrose inverse of $k$, as understood in \cite{CP24}, although this plays no
role at this moment.}
\[ {\bar{f} \hspace{0.17em} dW } = \bar{f} \hspace{0.17em}
   k^+ (d Y - h_1 d s) + \bar{f} \hspace{0.17em} (I - k^+ k) d W = : f_1 (d Y
   - h_1 d s) + f_2 dW. \]
Similarly, rewrite the stochastic integral in the Girsanov factor $Z = \exp (I)$,
using
\[ h_2^{\top}  d {W} = : h_3 (d Y - h_1 d s) + h_4 d {W} . \]
This results in the system, with $b := \bar{b} - \bar{f} h_2$,
\begin{eqnarray}
\label{eq:stoch-sig-1}
&& dX_t (\omega) = b(t, X_t, Y_t) \, dt + \sigma(t, X_t, Y_t) \, dB_t(\omega) 
+ f_1(t, X_t, Y_t) \left( dY_t(\omega) - h_1(s, Y_s) \, ds \right) \nonumber \\
&& \quad\quad\quad\quad  + f_2(t, X_t, Y_t) \, dW_t(\omega) \\
\label{eq:stoch-obs-1}
&& dY_t(\omega) = h_1(t, Y_t) \, dt + k(t, Y_t) \, dW_t(\omega) \\
\label{eq:stoch-z-1}
&& dZ_t(\omega) = Z_t h_3(t, X_t, Y_t) \left( dY_t(\omega) - h_1(s, Y_s) \, ds \right) 
+ Z_t h_4(t, X_t, Y_t) \, dW_t(\omega).
\end{eqnarray}
In the {\em non-degenerate case}, when $k$ is invertible (take $d_Y = d_W$ and $k^+ =k^{-1}$) we can assume without loss of generality that $h_1=0$ (else absorbe in $h_2$). We also have $f_2 = 0$ and $h_4
= 0 $ and simplify notation accordingly by setting
$$f \assign f_1 = \bar{f} k^{-1}, h := h_3 := h_2^{\top} k^{-1},$$ 
which leads us, in the non-degenerate case, to following simpler form 
\begin{eqnarray}
\label{eq:stoch-sig-2}
&& dX_t(\omega) = b(t, X_t, Y_t) \, dt + \sigma(t, X_t, Y_t) \, dB_t(\omega) 
+ f(t, X_t, Y_t) \, dY_t(\omega) \\
\label{eq:stoch-obs-2}
&& dY_t(\omega) = k(t, Y_t) \, dW_t(\omega) \\
\label{eq:stoch-z-2}
&& dZ_t(\omega) = Z_t \, h(t, X_t, Y_t) \, dY_t(\omega).
\end{eqnarray}

We note that $(W,Y)$ and $B$ are $\mathbb{P}$-independent, 
$Y$ is a semi- (resp. local-)martingale with covariation dynamics
\[ d \langle Y, Y \rangle_t = k (t, Y_t) k (t, Y_t)^{\top} d t . \]

\subsection{Kallianpur--Striebel formula and Clark's robustness result}\label{sec:stochKSformula}

\begin{theorem}[Kallianpur--Striebel; Thm 3.5 in \cite{CP24}] \label{thm:KS}
Granted \cref{assumptionCPM}, 
and assume dynamics \eqref{eq:stoch-sig-1}, \eqref{eq:stoch-obs-1} and \eqref{eq:stoch-z-1} are  strongly well-posed under $\mathbb{P}$. Then, for every $\varphi \in
  \mathcal{B}_b (\mathbb{R}^{d_X})$ and $t \in [0, T]$,
  \begin{equation} \label{eq:KallianpurStriebelformula}
      \varsigma_t (\varphi) = \frac{\mathbb{E} [Z_t \varphi (X_t) \mid
     \mathcal{F}^Y_t]}{\mathbb{E} [Z_t \mid \mathcal{F}^Y_t] } \backassign
     \frac{\mu_t (\varphi)}{\mu_t (1)}  \quad \mathbb{P} (\mathbb{P}^o)
     \text{-a.s.} 
  \end{equation} 
  where $(\mathcal{F}^Y_t)$ is the observation filtration\footnote{Following Remark 3.9 in \cite{CP24} with completion, one has a cadlag version of $\vars(\varphi)$. Our rough path approach, if applicable, will effectively bypass such considerations.}
  and $\mu=\{\mu_t, t\ge 0\}$ is the so-called {\em unnormalised filter}.
\end{theorem}


It is a very natural question if and when such a filter depends continuously
on the observation. In the non-correlated case, when the statistics of the signal are not affected by the observation noise, the following result is classical.


\begin{theorem}[Clark robustness; \cite{CC05, BC09,davis2011pathwise, Yau24}]
  Granted \cref{assumptionCPM} 
  and strongly well-posed dynamics \begin{eqnarray}\label{eq:stoch-sig2}
 && d X_t (\omega)  =  {b} (t, X_t) d t + \sigma (t, X_t) d B_t
  (\omega)  \\ 
 && d Y_t (\omega)  =  h (t, X_t) dt + 
 d B^{\perp}_t
  (\omega),
\end{eqnarray}
  so that the signal process $X$ is independent of the observation noise. 
  Assume $\varphi$ is bounded continuous. Then it holds that $\mu_t
  (\varphi)$, and by consequence the filter $\varsigma_t (\varphi)$, admits a
  version that is a continuous function of the observation path $Y (\cdot)$.
  That is, there is continuous $\Phi$ on pathspace s.t.
  \[ {\mathbb{E}^o}   [\varphi (X _t) \tmcolor{red}{\tmcolor{red}{}} |
     \mathcal{F}^Y_t] \nobracket = \Phi (Y). \]
\end{theorem}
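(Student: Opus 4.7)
The plan is to start from the Kallianpur--Striebel representation $\vars_t(\varphi) = \mu_t(\varphi)/\mu_t(1)$ with $\mu_t(\varphi) = \mathbb{E}[Z_t \varphi(X_t) \mid \mathcal{F}^Y_t]$, noting that under $\mathbb{P}$ the process $Y$ is a standard Brownian motion independent of $X$ (since in the uncorrelated setting $X$ is $\mathcal{F}^B$-measurable and $B$, $Y$ are $\mathbb{P}$-independent), and
\begin{equation*}
Z_t = \exp\!\left(\int_0^t h(s,X_s)^\top \, dY_s - \tfrac{1}{2}\int_0^t |h(s,X_s)|^2\, ds\right).
\end{equation*}
Since $\mu_t(\varphi)$ and $\mu_t(1)$ are both conditional expectations against $Z_t$, the whole task reduces to producing a pathwise-in-$Y$ representative of $Z_t$; the only obstacle is the It\^o integral $\int_0^t h(s,X_s)^\top dY_s$.

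Following Clark, I would remove this integral via integration by parts. Applying It\^o's formula to $h(s,X_s)^\top Y_s$ under $\mathbb{P}$ and using $\langle h(\cdot,X_\cdot), Y\rangle \equiv 0$ (as $X$ is driven only by $B$) yields
\begin{equation*}
\int_0^t h(s,X_s)^\top\, dY_s = h(t,X_t)^\top Y_t - \int_0^t Y_s^\top (\mathcal{L}_s h)(s,X_s)\, ds - \int_0^t Y_s^\top (\nabla_x h\,\sigma)(s,X_s)\, dB_s,
\end{equation*}
where $\mathcal{L}_s = \partial_s + b(s,\cdot)\cdot\nabla_x + \tfrac12 \mathrm{tr}(\sigma\sigma^\top D^2_x \cdot)$ is the It\^o generator of $X$. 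Under mild regularity (e.g.\ $h \in C^{1,2}_b$ and $b,\sigma$ Lipschitz), this eliminates stochastic integration against $Y$.

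For each continuous path $y \in C([0,t];\R^{d_Y})$ I would then set
\begin{align*}
\tilde Z_t(y) := \exp\Bigl(& h(t,X_t)^\top y_t - \int_0^t y_s^\top (\mathcal{L}_s h)(s,X_s)\, ds \\
& - \int_0^t y_s^\top (\nabla_x h\,\sigma)(s,X_s)\, dB_s - \tfrac{1}{2}\int_0^t |h(s,X_s)|^2\, ds \Bigr),
\end{align*}
and $\Phi(y) := \mathbb{E}[\varphi(X_t)\tilde Z_t(y)]$. Since $X$ is $\mathbb{P}$-independent of $Y$ and $\tilde Z_t(Y) = Z_t$ almost surely, a standard freezing argument gives $\mathbb{E}[\varphi(X_t)Z_t \mid \mathcal{F}^Y_t] = \Phi(Y)$ $\mathbb{P}$-a.s., hence also $\mathbb{P}^o$-a.s.\ by equivalence. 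Continuity of $\Phi$ under uniform convergence of paths then follows: if $y^n \to y$ uniformly on $[0,t]$, the It\^o integral $\int_0^t y^n_s (\nabla_x h\,\sigma)(s,X_s)\,dB_s$ converges $\mathbb{P}$-a.s.\ (bounded integrand times uniformly converging scalar factor), hence $\tilde Z_t(y^n) \to \tilde Z_t(y)$ a.s., and boundedness of $\varphi$ combined with uniform exponential integrability of $\tilde Z_t(y)$ over $y$ in bounded sets closes the argument via dominated convergence. The main technical point is this last uniform exponential moment bound; boundedness of $h$, $\mathcal{L}h$ and $\nabla_x h\,\sigma$ together with a Novikov-type estimate handle it in the standard way.
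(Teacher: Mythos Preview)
The paper does not give its own proof of this statement; it is quoted as a classical result with references to \cite{CC05,BC09,davis2011pathwise,Yau24}, and the paper's original contribution begins only afterwards with the rough-path generalisation. Your sketch is precisely the classical Clark argument found in those references: integrate by parts to replace $\int_0^t h(s,X_s)^\top dY_s$ by quantities that are pathwise in $Y$, freeze $Y=y$ using $\mathbb{P}$-independence of $X$ and $Y$, and check continuity of the resulting functional.

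One small imprecision: when $y^n \to y$ uniformly, the stochastic integral $\int_0^t (y^n_s)^\top (\nabla_x h\,\sigma)(s,X_s)\,dB_s$ converges in $L^2$ (hence in probability), not automatically almost surely. This is enough for the argument, since what you actually need is $\Phi(y^n)\to\Phi(y)$, i.e.\ convergence of expectations; convergence in probability of $\tilde Z_t(y^n)$ together with the uniform exponential-moment bound you mention gives this via Vitali. Note also that the integration-by-parts step silently imposes $h\in C^{1,2}$-type regularity which is not part of the theorem statement as written; this is indeed the standing assumption in the classical proofs you are reproducing.
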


This type of robust representation is also possible when the signal and the
observation noise are correlated, provided the observation process is scalar.
However, in the general case of correlated noise and multidimensional
observation, $d_Y > 1$, such is impossible, as shown by explicit example in \cite{CDFO13}. By
using the theory of rough paths the same paper offered a first solution to
this deficiency, showing that under suitable conditions on the coefficients
fields it is possible to find a version of the filter that is a continuous
function of the observation rough path $\mathbf{Y} (\cdot)$. Our work also offers some improvements with regard to \cite{CDFO13} in removing excessive regularity demands, and dealing with the general dynamics seen in  \eqref{eq:stoch-sig}, \eqref{eq:stoch-obs} 
at least in the non-degenerate case.

\subsection{The (stochastic) filtering equation in spaces of measures} \label{sec:23}

In case $k=0$, 
we see that $X$, conditionally on $Y$, is Markov with ($\mathcal{F}^Y$-adapted random) generator $A_t = A_{t,\omega}$, so that the conditional law $\varsigma_t = \mu_t$ satisfies the random forward equation\footnote{This is consistent with the Kallianpur--Striebel formula since here indeed a.s. $\mu_t (1) \equiv 1$.}
\begin{equation}
  \mu_t (\varphi) = \mu_0 (\varphi) + \int_0^t \mu_r  (A_{r} \varphi) dr.
\end{equation}
For more general (and typically invertible) $k$, recalling its Feynman-Kac representation, $\mu_t$ is only a finite random measure. Following the general discussion \cite{CP24} its evolution described by the {\em Zakai equation}. That is, for any $\varphi \in C^2_b(\R^{d_X};\R)$, \footnote{Here and in the entire \cref{sec:review}, $C_b^2(\R^{d_X};\R)$ denotes the classical space of bounded and twice continuously differentiable functions with bounded derivatives. Note that in the rest of the paper we denote by $\mathcal{C}^2_b(\R^{d_X};\R)$ the space of bounded and continuously differentiable functions with bounded, Lipschitz continuous first derivative.  } 
\begin{align}
  \mu_t (\varphi)  = & \ \mu_0 (\varphi) + \int_0^t \mu_r  (A_r \varphi) dr \nonumber \\
  &   + \int_0^t \mu_r  (\nabla \varphi \bar{f} (r, \cdot, Y_r) + \varphi
  h_2^\top (r, \cdot, Y_r)) k^{+} (r, Y_r)  (dY_r - h_1 (r, Y_r) d r) \label{equ:ZakDeg} \\
   \underset{\tmop{ND}}{=} & \ \mu_0 (\varphi) + \int_0^t \mu_r  (A_r \varphi) dr \nonumber \\
  &   + \int_0^t \mu_r  (\nabla \varphi f (r, \cdot, Y_r) + \varphi h (r,
  \cdot, Y_r))  
  d Y_r,
  \quad \mu_0 = \tmop{Law} (X_0) ,\label{equ:ZakNonDeg}
\end{align}
which we have written for the reader's convenience in both the degenerate and non-degenerate (ND) case.
 In the above $k^+$ denotes the Moore-Penrose inverse of $k$, the stochastic differential $dY_t$ is understood in It\^o sense and the random operator $A_t$ is given by 
\begin{equation} \label{eq:operatorA}
  A_t \varphi (x) = \sum_{i = 1}^{d_X} \partial_i \varphi (x) \bar{b}^i (t, x,
  Y_t) + \frac{1}{2}  \sum_{i, j = 1}^{d_X} \partial_{i j}^2 \varphi (x)
  \bar{a}^{ij} (t, x, Y_t),
\end{equation}
with
\begin{align*}
  \bar{a}^{ij} (t, x, Y_t) & =  \sum_{\theta = 1}^{d_B} \sigma^i_\theta \sigma_\theta^j  (t,
  x, Y_t) + \sum_{\kappa = 1}^{d_Y} \bar{f}^i_{\kappa}  \bar{f}^j_{\kappa} 
  (t, x, Y_t) \\
   & \underset{\tmop{ND}}{=}  \sum_{\theta = 1}^{d_B} \sigma^i_{\theta} \sigma_{\theta}^j  (t, x,
  Y_t) + \sum_{\lambda, \eta = 1}^{d_Y} f^i_{\lambda}  (t, x, Y_t)  \frac{d
  \langle Y^{\lambda}, Y^{\eta} \rangle_t}{d t} f^j_{\eta}  (t, x, Y_t)
\end{align*} and
\begin{align*}
  \bar{b}^i (t, x, Y_t) & =  b^i (t, x, Y_t) + \sum_{\kappa  =
  1}^{d_Y} \bar f^i_{\kappa} (t, x, Y_t)
  h_2^{\kappa} (t, x, Y_t) \\
  & \underset{\tmop{ND}}{=}  b^i (t, x, Y_t) + \sum_{\kappa, \eta = 1}^{d_Y} f^i_{\kappa} (t, x,
  Y_t) \frac{d \langle Y^{\kappa}, Y^{\eta} \rangle_t}{d t} h_{\eta} (t, x,
  Y_t) .
\end{align*}
It will be convenient to define the first order differential operator $\Gamma_t
= ((\Gamma_t)_1, \ldots, (\Gamma_t)_{d_Y})$ as
\be\label{eq:stochGamma}
(\Gamma_t )_{\kappa} \varphi (x) : = \sum_{i = 1}^{d_X} \partial_i \varphi
   (x) f^i_{\kappa}  (t, x, Y_t) + \varphi (x) h _{\kappa} (t, x, Y_t), \qquad
   \kappa = 1, ..., d_Y . 
\ee

\begin{assumption}{CP-E} \label{assumptionCPE}
    Given are measurable functions \begin{align*}
        b &: [0,+\infty) \times \R^{d_X} \times \R^{d_Y}  \to \R^{d_X} \\
        \sigma &: [0,+\infty) \times \R^{d_X} \times \R^{d_Y}   \to \R^{d_X \times d_B} \equiv \mathrm{Lin}(\R^{d_B},\R^{d_X}) \\
        \bar f &: [0,+\infty) \times \R^{d_X}  \to \R^{d_X \times d_W} \equiv \mathrm{Lin}(\R^{d_W},\R^{d_X}) \\
        \bar h &: [0,+\infty) \times \R^{d_X}  \to \R^{d_Y} \\
        h_1 &: [0,+\infty) \to \R^{d_Y} \\
        h_2 &: [0,+\infty) \times \R^{d_X} \times \R^{d_Y}  \to \R^{d_W} \\
        k  &: [0,+\infty) \times \R^{d_Y}  \to  \R^{d_Y \times d_W}  
    \end{align*}
    with the following properties: 
    \begin{itemize}
        \item $ \bar{b}, \sigma, \bar{f}, \bar{h}, h_1, k $ are locally Lipschitz in the \( (x, y) \)-variables, uniformly in the $t$-variable; 
        That is, for any $R>0$, there exists $C_R >0$ such that $$ |\bar{b}(t, x_1, y_1) - \bar{b}(t, x_2, y_2) | \le C_R ( | x_1 - x_2 | + | y_1 - y_2 | ) $$ for all $t \in [0,+\infty)$, $|x_1|,|x_2|,|y_1|,|y_2| \le R$, and similarly for $\sigma, \bar{f}, \bar{h}, h_1, k$ .
        \item The functions  $\bar{b}, \sigma, \bar{f}, \bar{h}, h_1, h_2, k$  satisfy a linear growth condition in $(x, y)$. Namely, there exists $K > 0$  such that $$ |\bar{b}(t, x, y) | \le K (1 + | x| + | y| ) $$
        for all $(t, x, y)$, and similarly for \( \sigma, \bar{f}, \bar{h}, h_1, h_2, k \).
    \end{itemize}
    The initial conditions \( X_0 \), \( Y_0 \) have finite second moments, that is $\E(|X_0|^2 + |Y_0|^2) < +\infty$. 
\end{assumption}

\begin{assumption}{CP-U} \label{assumptionCPU}
    For any arbitrary $T>0$, the functions $\bar b, \sigma, \bar f, h_2$ are bounded on $[0,T] \times \R^{d_X} \times \R^{d_Y}$ and the functions $h_1$ and $k$ are bounded on $[0,T] \times \R^{d_Y}$. 
    Moreover, for some integer $$n > \frac{d_X}{2}+2 , $$ all the partial derivatives of the functions $\bar b, \sigma, \bar f, \bar h$ in the $x$-variable with multi-index $\alpha$, such that $|\alpha| \le n$, are bounded on $[0,T] \times \R^{d_X} \times \R^{d_Y}$.
\end{assumption}

\begin{theorem}
[{cf. \cite[Theorem 3.11 and Theorem 5.5]{CP24} }] \label{thm:CP24ZakaiWP} 
  Under \cref{assumptionCPE}, the measure-valued process $\mu_t$ defined in \eqref{eq:KallianpurStriebelformula} solves the Zakai equation \eqref{equ:ZakDeg} for any $\varphi \in C^2_b(\R^{d_X};\R)$. 
  Moreover, under \cref{assumptionCPE} and \cref{assumptionCPU}, it is the unique $(\mathcal{F}^Y_t)_t$-adapted solution to \eqref{equ:ZakDeg} satisfying \begin{equation*}
      \E \Big( \sup_{t \in [0,T]} \mu_t(1)^2 \Big) < +\infty \qquad \text{for any $T>0$}. 
  \end{equation*}
\end{theorem}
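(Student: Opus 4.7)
The plan is to treat existence and uniqueness separately, essentially importing the arguments from \cite[Thm.~3.11 and 5.5]{CP24} into the present notation.

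\textbf{Existence.} The starting point is It\^o's formula applied to the product $Z_t \varphi(X_t)$ under $\mathbb{P}$, using the semimartingale dynamics \eqref{eq:stoch-sig-1} and \eqref{eq:stoch-z-1}. Expanding the product yields, for $\varphi\in C^2_b(\mathbb{R}^{d_X};\mathbb{R})$,
\begin{equation*}
Z_t\varphi(X_t) = \varphi(X_0) + \int_0^t Z_r (A_r \varphi)(X_r)\,dr + \int_0^t Z_r \nabla\varphi(X_r)\sigma(r,X_r,Y_r)\,dB_r + M^Y_t + M^W_t,
\end{equation*}
where $M^Y$ and $M^W$ collect the stochastic integrals against $dY_r-h_1(r,Y_r)dr$ and $dW_r$, with integrands of exactly the form appearing on the right-hand side of \eqref{equ:ZakDeg}. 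One then conditions on $\mathcal{F}^Y_t$, using three standard ingredients: (i) a conditional Fubini theorem to pull the conditional expectation inside the $dr$-integral; (ii) the fact that $B$ and $W$ are $\mathbb{P}$-independent of $\mathcal{F}^Y$, so that the $dB$-integral and the $dW$-integral vanish in conditional expectation; and (iii) the commutation property
\begin{equation*}
\mathbb{E}\Big[\int_0^t \Phi_r\,dY_r \,\Big|\, \mathcal{F}^Y_t\Big] = \int_0^t \mathbb{E}[\Phi_r \mid \mathcal{F}^Y_r]\,dY_r,
\end{equation*}
valid for sufficiently integrable $\mathcal{F}_r$-adapted $\Phi$. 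The required integrability of $Z$ and of $|X_r|$-polynomial factors follows from \cref{assumptionCPE} and \cref{assumptionCPM}, together with standard SDE moment estimates. Identifying the conditional expectations with $\mu_r$ via Kallianpur--Striebel delivers \eqref{equ:ZakDeg}.

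\textbf{Uniqueness.} This is the substantive step and the source of the dimension-dependent restriction. The strategy is duality. Fix $T>0$ and $\varphi \in C^2_b(\mathbb{R}^{d_X};\mathbb{R})$, and construct, on the probability space carrying the observation $Y$, a classical solution $v=v(s,x;\omega)$ on $[0,T]\times\mathbb{R}^{d_X}$ of the backward parabolic SPDE dual to \eqref{equ:ZakDeg}, with terminal condition $v(T,\cdot)=\varphi$. In Sobolev-space form, one obtains $v$ by $L^2$-energy estimates and fixed-point arguments on the random coefficients $\bar b,\sigma,\bar f,\bar h$, $h_1,k$; the requirement that $n$ derivatives in $x$ are bounded (\cref{assumptionCPU}) together with Sobolev embedding $H^n\hookrightarrow C^2_b$ when $n>d_X/2+2$ is precisely what upgrades the weak solution to a classical $C^{1,2}_{s,x}$ function amenable to an It\^o--Kunita expansion. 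One then tests any two candidate solutions $\mu^1,\mu^2$ of \eqref{equ:ZakDeg} against $v$: the duality identity, a conditional It\^o formula applied path-by-path in $Y$, and cancellation of the forward/backward cross terms yield $\mu^1_t(\varphi) = \mu^2_t(\varphi)$ almost surely. Since $C^2_b$ is measure-determining on $\mathcal{M}_F(\mathbb{R}^{d_X})$, this gives $\mu^1=\mu^2$. The $L^2$-bound $\mathbb{E}\sup_t \mu_t(1)^2<\infty$ is what justifies all the integrations by parts and the martingale identifications appearing along the way.

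\textbf{Main obstacle.} The construction of the backward dual and the verification that it is regular enough to be used as a test function is the delicate point, and it is here that the threshold $n>d_X/2+2$ appears: the Sobolev energy method in $H^n(\mathbb{R}^{d_X})$ needs bounded coefficients and $n$ bounded $x$-derivatives, while the embedding $H^n\hookrightarrow C^2_b$ requires $n>d_X/2+2$. Everything else — moment estimates, conditional Fubini, and the commutation of conditional expectation with stochastic integration against $Y$ — is a careful but standard application of the assumptions. Since the statement is explicitly presented as a recollection from \cite{CP24}, one may simply refer to those proofs after this identification of notation; the interest of the present paper is that the subsequent rough reformulation will circumvent exactly the dimensional threshold that the duality step forces here.
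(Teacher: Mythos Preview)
The paper does not supply its own proof of this theorem; it is stated as a review result and attributed verbatim to \cite[Theorem~3.11 and Theorem~5.5]{CP24}. Your overall strategy (It\^o product rule plus conditioning for existence, duality with a backward SPDE for uniqueness) matches the approach of \cite{CP24} and is the right picture.

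There is, however, a genuine error in your existence sketch. You write that ``$B$ and $W$ are $\mathbb{P}$-independent of $\mathcal{F}^Y$, so that the $dB$-integral and the $dW$-integral vanish in conditional expectation.'' The claim for $B$ is correct, but the claim for $W$ is false: under $\mathbb{P}$ the observation satisfies $dY_t = h_1(t,Y_t)\,dt + k(t,Y_t)\,dW_t$, so $\mathcal{F}^Y_t \subseteq \mathcal{F}^W_t$ and $W$ is certainly not independent of $\mathcal{F}^Y$. In the non-degenerate case this is harmless because $f_2 = \bar f(I-k^{-1}k)=0$ and $h_4=0$, so no residual $dW$-integral survives. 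But the statement concerns \eqref{equ:ZakDeg}, i.e.\ the degenerate case, where residual $dW$-integrals with integrands $\bar f(I-k^+k)$ and $h_2^\top(I-k^+k)$ genuinely appear. Their conditional expectation vanishes not by independence but because $(I-k^+k)$ projects onto the orthogonal complement of the range of $k^\top$, making those integrals orthogonal (in the quadratic-variation sense) to increments of $Y$; this is exactly the content of \cite[Lemma~3.13]{CP24}, which the present paper also invokes in its degenerate-noise discussion. Replacing your independence claim by an appeal to that lemma repairs the argument.
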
 

\begin{theorem}[{cf. \cite[Theorem 3.11 and Theorem 3.17]{CP24}}]
  Under \cref{assumptionCPE}, the measure-valued process $\varsigma_t$ defined as in \eqref{defofvarsigma} satisfies the following \textup{Kushner--Stratonovich equation}, for all $\varphi \in C_b^2  (\mathbb{R}^{d_X};\R) :$ 
  \begin{align}
      \varsigma_t (\varphi )   = & \  \varsigma_0 (\varphi ) + \int_0^t \varsigma_r
    (A_r \varphi) \hspace{0.17em} \mathrm{d} r \notag  \\
      &   + \int_0^t (\varsigma_r (\nabla \varphi  \bar{f} + \varphi
    h_2^{\top}) - \varsigma_r (\varphi) \varsigma_r (h_2^{\top})) k^{+} (r,
    Y_r)  \left( \mathrm{d} Y_r - \varsigma_r (\bar{h}) \, dr \right) \label{eq:KushnerStratonovich_CP24} \\ 
     \underset{\tmop{ND}}{=} & \  \varsigma_0 (\varphi) + \int_0^t \vars_r  (A_r \varphi) dr \notag \\
        & + \int_0^t \left( \varsigma _r  (\nabla \varphi f  +
    \varphi h ) - \varsigma_r(\varphi)\varsigma_r(h) \right) \left( \mathrm{d} Y_r - \frac{d\langle Y, Y \rangle_r}{dr} \varsigma_r (h^T) \, dr \right) .  \label{eq:KushnerStratonovich_CP24-nd}
  \end{align}
  Moreover, the uniqueness of the solution of equation \eqref{eq:KushnerStratonovich_CP24} is equivalent to that of equation \eqref{equ:ZakDeg}. 
\end{theorem}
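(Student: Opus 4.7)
The plan is to derive the Kushner--Stratonovich (KS) equation directly from the Zakai equation and the Kallianpur--Striebel formula by means of Itô's quotient rule, and then to establish the uniqueness equivalence via an explicit bijection between Zakai and KS solutions.

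\textbf{Step 1 (Itô quotient).} Write $\varsigma_t(\varphi) = \mu_t(\varphi)/N_t$ with $N_t := \mu_t(1)$; by Kallianpur--Striebel, $N_t > 0$ a.s. Applying the Zakai equation \eqref{equ:ZakDeg} with $\varphi \equiv 1$, and noting $A_r\mathbf{1}=0$ and $\nabla\mathbf{1}=0$, gives
\begin{equation*}
dN_t = \mu_t(h_2^{\top}) k^{+}(t,Y_t) \,(dY_t - h_1(t,Y_t)\,dt).
\end{equation*}
I would then apply Itô's rule to $(u,n) \mapsto u/n$ at $(\mu_t(\varphi), N_t)$, using the Zakai representation for $\mu_t(\varphi)$ in the numerator.

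\textbf{Step 2 (covariation book-keeping).} Under $\mathbb{P}$, $dY_r = h_1(r,Y_r)\,dr + k(r,Y_r)\,dW_r$, so the martingale parts of $\mu(\varphi)$ and $N$ are both driven by $dW$. The cross-variation factor, using $k^{+}kk^{\top}(k^{+})^{\top}$ in the Moore--Penrose context, reduces after the standard identities $k k^{+} k = k$ to the inner products of $(\nabla\varphi\,\bar f + \varphi h_2^{\top})k^{+}$ and $\mu_r(h_2^{\top})k^{+}$ contracted with $kk^{\top}$. Gathering the $1/N_t$ and $-\mu_t(\varphi)/N_t^2$ terms from Itô, and factoring $N_t$ out of $\mu_r$ to produce $\varsigma_r$, yields a drift correction proportional to $-\varsigma_r(\nabla\varphi\,\bar f + \varphi h_2^{\top})k^{+}k\,\varsigma_r(h_2^{\top})^{\top}\,dr$ plus the cross-term with $\mu_t(\varphi)\varsigma_r(h_2^{\top})$.

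\textbf{Step 3 (innovation form).} The remaining algebra reorganises the stochastic integrand into
\begin{equation*}
\bigl(\varsigma_r(\nabla\varphi\,\bar f + \varphi h_2^{\top}) - \varsigma_r(\varphi)\varsigma_r(h_2^{\top})\bigr) k^{+}(r,Y_r),
\end{equation*}
while the drift corrections combine with $-\varsigma_r(\varphi)\varsigma_r(h_2^{\top})k^{+}(dY_r - h_1\,dr)$ and $\varsigma_r(h_2^{\top})k^{+}(dY_r - h_1\,dr)$ to produce exactly $dY_r - \varsigma_r(\bar h)\,dr$, using $\bar h = h_1 + k h_2$ and $\bar h - h_1 = kh_2$. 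This gives \eqref{eq:KushnerStratonovich_CP24}. The ND case \eqref{eq:KushnerStratonovich_CP24-nd} follows immediately by specialising $k^{+} = k^{-1}$, $h_1 = 0$, $f = \bar f k^{-1}$, $h = h_2^{\top} k^{-1}$, and $d\langle Y,Y\rangle_r/dr = kk^{\top}$.

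\textbf{Step 4 (uniqueness equivalence).} I would set up an explicit bijection at the level of solutions, both started from $\mathrm{Law}(X_0)$. The map $\mu \mapsto \varsigma := \mu/\mu(1)$ sends Zakai solutions to KS solutions by Steps 1--3. Conversely, given a KS solution $\varsigma$, define $N$ as the unique strong solution of the \emph{linear} SDE
\begin{equation*}
dN_t = N_t \,\varsigma_t(h_2^{\top}) k^{+}(t,Y_t)\,(dY_t - h_1(t,Y_t)\,dt), \qquad N_0 = 1,
\end{equation*}
which is a Doléans--Dade exponential in the observation filtration (coefficients are $(\mathcal{F}^Y_t)$-adapted since $\varsigma$ is), and set $\mu_t := N_t\,\varsigma_t$. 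An Itô computation, reversing Steps 1--3, shows $\mu$ solves \eqref{equ:ZakDeg}. Since both maps are inverses of each other and preserve initial data, a pair of distinct Zakai solutions corresponds to a pair of distinct KS solutions and vice versa, yielding the claimed equivalence.

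\textbf{Main obstacle.} The nontrivial point is the matrix algebra of Step 2 in the degenerate setting: one must verify that the partial projections $k^{+}k$ and $kk^{+}$ combine correctly so that the drift correction in the Itô quotient telescopes exactly into $\varsigma_r(\bar h)\,dr$ and no spurious terms involving the kernel of $k$ remain. Everything else is routine provided one is careful about adaptedness and integrability (ensured by $\E\sup_{t\le T}\mu_t(1)^2<\infty$ from \cref{thm:CP24ZakaiWP} and the boundedness assumptions in \cref{assumptionCPU} where invoked).
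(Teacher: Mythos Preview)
The paper does not prove this statement; it is quoted from \cite{CP24} as part of the review section. Your sketch is the standard derivation (It\^o quotient of the Zakai solution, then an explicit bijection for uniqueness), and it is correct. The Moore--Penrose reduction you flag as the main obstacle indeed works: using $(k^+k)^\top=k^+k$ and idempotence, one has $k^+ k\, k^\top (k^+)^\top = (k^+k)^2 = k^+k$, so the It\^o correction term $-(\varsigma_r(\nabla\varphi\,\bar f + \varphi h_2^\top)-\varsigma_r(\varphi)\varsigma_r(h_2^\top))\,k^+k\,\varsigma_r(h_2)\,dr$ is exactly what is needed to turn $(dY_r-h_1\,dr)$ into $(dY_r-\varsigma_r(\bar h)\,dr)$ after writing $\varsigma_r(\bar h)=h_1+k\,\varsigma_r(h_2)$.

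For context, the paper \emph{does} prove the rough-path analogue of this theorem (\cref{thm:existenceroughKS}, \cref{thm:uniqueness_roughKS}, and \cref{lemma:linearauxilaryequation}), and the strategy there mirrors yours exactly: the It\^o quotient is replaced by the rough product rule \cite[Theorem~7.7]{FH20} applied to $F(u,n)=u/n$, and the bijection for uniqueness is implemented via the auxiliary linear equation $dZ^\varsigma_t=\varsigma_t(h)\,Z^\varsigma_t\,d\BY_t$ (your Dol\'eans--Dade exponential $N$).

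One small point to tighten in Step~4: for the inverse map $\varsigma\mapsto N\varsigma$ to land back in Zakai solutions with $N_t=\mu_t(1)$, you need $\varsigma_t(1)\equiv 1$. This is not automatic from the KS equation as written; it requires a short argument (the process $a_t:=\varsigma_t(1)-1$ solves a linear homogeneous equation with $a_0=0$, hence vanishes). The paper makes this step explicit in \cref{lemma:linearauxilaryequation}~\textit{i.}; you should do the same.
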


\begin{remark} \label{rem:CP24_23}
    In \cite{CP24}, the authors consider time-dependent test functions in the class $C^{1,2}_b([0,T] \times \R^{d_X};\R)$, in which case, denoting by $\varphi_t = \varphi(t,\cdot)$, the Zakai equation is given as 
    \begin{align}
  \mu_t (\varphi_t)  = & \ \mu_0 (\varphi_0) + \int_0^t \mu_r  (\partial_t \varphi_r + A_r \varphi_r) dr \nonumber \\
  &   + \int_0^t \mu_r  (\nabla \varphi_r \bar{f} (r, \cdot, Y_r) + \varphi_r
  h_2^T (r, \cdot, Y_r)) k^{+} (r, Y_r)  (dY_r - h_1 (r, Y_r) d r) \label{eq:ZakDeg_testfunctions} \\
   \underset{\tmop{ND}}{=} & \ \mu_0 (\varphi_0) + \int_0^t \mu_r  (\partial_t \varphi_r + A_r \varphi_r) dr \nonumber \\
  &   + \int_0^t \mu_r  (\nabla \varphi_r f (r, \cdot, Y_r) + \varphi_r h (r, \cdot, Y_r))  
  d Y_r,
  \quad \mu_0 = \tmop{Law} (X_0). \label{eq:ZakNonDeg_testfunctions}
\end{align}
    By generalizing \cite[Lemma 4.8]{BC09} to the stochastic filtering framework \eqref{eq:stoch-sig}-\eqref{eq:stoch-obs}, it is possible to prove that, at least in the non-degenerate case and under mild regularity and growth assumptions on the coefficients (which are satisfied, for instance, by taking $n=2$ in \cref{assumptionCPU}, regardless of $d_X$), for any given $\mathcal{F}^Y_t$-adapted, $\mathcal{M}_F(\R^{d_X})$-valued $\mu=(\mu_t)_{t \in [0,T]}$ with continuous paths, such that $\E(\int_0^T \mu_r(1) \, dr) < +\infty$, the following are equivalent:  
    \begin{enumerate}   
    \item $\mu$ satisfies \eqref{equ:ZakDeg},\eqref{equ:ZakNonDeg} for any $\varphi \in C^2_b(\R^{d_X};\R)$; 
    \item $\mu$ satisfies \eqref{equ:ZakDeg},\eqref{equ:ZakNonDeg} for any $\varphi \in C^\infty_c(\R^{d_X};\R)$;   
    \item $\mu$ satisfies \eqref{eq:ZakDeg_testfunctions}, \eqref{eq:ZakNonDeg_testfunctions} for any $\varphi \in C^{1,2}_b([0,T] \times \R^{d_X};\R)$.  
    \end{enumerate} 
\end{remark}

\section{Elements of rough (stochastic) analysis} \label{sec:elem}



\subsection{Rough paths} \label{sec:RPs}
We consider a class of level-2 rough path over $\mathbb{R}^{d_Y}$, say
$\mathbf{Y} = (Y, \mathbb{Y}) \in (\mathscr{C}, \rho)$ where $(\mathscr{C}, \rho)$  
is a complete and metric space of rough paths, such that $(\mathscr{C}, \rho) \hookrightarrow (\mathscr{C}^{\alpha} ,
\rho_{\alpha})$, the usual $\alpha$-H\"older rough path space\footnote{ $\rho_{\alpha} (\mathbf{Y}, \overline{\mathbf{Y}}) = |
  \delta Y - \delta \bar{Y} |_{\alpha} + |\mathbb{Y}- \bar{\mathbb{Y}} |_{2
  \alpha} := \sup_{s < t} \frac{|\delta Y_{s,t} - \delta \bar Y_{s,t}|}{|t-s|^\alpha}+ \sup_{s < t} \frac{|\mathbb{Y}_{s,t} - \bar{\mathbb{Y}}_{s,t}|}{|t-s|^{2\alpha}} $. Recall also the inhomogeneous rough path norm $|\BY|_\alpha:= |\delta Y|_\alpha + |\mathbb{Y}|_{2\alpha}$ and the
  homogeneous rough path norm
$ \nn{\mathbf{Y} }_\alpha = | \delta Y|_{\alpha} \vee
   \sqrt{|\mathbb{Y}|_{2 \alpha } } $. }, for some fixed
$\alpha \in (1 / 3, 1 / 2]$. We write $\mathscr{C}_T$ and
$\mathscr{C}^{\alpha}_T$ respectively when we want to  emphasize the time-horizon
$[0, T]$, the notation $\mathscr{C}^{\alpha}  ([0, T], \mathbb{R}^{d_Y})$ will also be used. \ For $\mathbf{Y} \in \mathscr{C}^\alpha$, the bracket
$[\mathbf{Y}]$ is a $2 \alpha$-H\"older path defined by
\[ (\delta Y) \otimes (\delta Y) =\mathbb{Y}+\mathbb{Y}^\top + \delta
   [\mathbf{Y}] . \]
Set also  $\mathbf{Y}^\circ = (Y, \mathbb{Y}^{\circ}) \assign
(Y, \mathbb{Y} +  \delta [\mathbf{Y}] / 2)$. 
In this work, we find it convenient to work with spaces where bracket draft corrections can be written in terms of Lebesgues integrals. Here are some examples of rough path spaces we have in mind, notation along \cite{FH20,FLZ25}:
\begin{itemize}
  
  
  
  
  \item  $(\mathscr{C}^{\alpha, 1}, \rho_{\alpha, 1})$, the complete (i.g.\ non-separable) space of $\alpha$-H\"older rough paths with Lipschitz bracket,
  $\mathscr{C}^{\alpha,1} := \{\mathbf{Y} \in
  \mathscr{C}^{\alpha} \mid | [ \mathbf{Y} ] |_{Lip} := \frac{|\delta [\BY]_{s,t}|}{|t-s|} < \infty\}$ with 
  $$
  \rho_{\alpha, 1} (\mathbf{Y}, \tmmathbf{Z}) \assign \rho_{\alpha}
  (\mathbf{Y}^{\circ}, \tmmathbf{Z}^{\circ}) + | [\mathbf{Y}] -
  [\tmmathbf{Z}] |_{Lip}.$$
  \item $(\mathscr{C}^{0, \alpha, 1}, \rho_{\alpha, 1})$, the Polish space
  of $\alpha$-H\"older rough paths with continuously differentiable bracket,
  with $\mathscr{C}^{0,\alpha, 1} :=  \{\mathbf{Y} \in
  \mathscr{C}^{0 , \alpha} \mid | [ \mathbf{Y} ] |_{C^1} < \infty\}$. 
\end{itemize}
In the above, $\mathscr{C}^{0 , \alpha}$ denotes the Polish space of $\alpha$-H\"older rough paths obtained as $\rho_\alpha$-closure of smooth rough paths (cf.\ \cite[Exercise 2.8]{FH20}). 
Writing $\mathscr{C}^{0, \alpha}_g$ for the classical (Polish) space of {\em geometric $\alpha$-H\"older rough paths}, obtained as $\rho_\alpha$-closure of lifted smooth paths. We have 
$$
(\mathscr{C}^{0, \alpha}_g, \rho_{\alpha}) 
\hookrightarrow 
(\mathscr{C}^{0, \alpha, 1}, \rho_{\alpha, 1})
\hookrightarrow 
(\mathscr{C}^{ \alpha, 1}, \rho_{\alpha, 1}). 
$$ 

\subsection{Rough stochastic differential equations} \label{app:RSDE}
Following \cite{FHL21, FLZ25},
\textit{rough It{\^o} processes} are continuous adapted stochastic processes, on some stochastic basis $\mathbf{\Omega}:=(\Omega, \mathcal{F}, \{\mathcal{F}_t\}_{t \in [0,T]}, \mathbb{P})$, of the form $X = X (\omega, \mathbf{Y}) = X^{\mathbf{Y}} (\omega)$ with dynamics
\begin{equation}
  \label{RIP1} d X_t = A_t (\omega, \mathbf{Y}) d t + \Sigma_t (\omega,
  \mathbf{Y}) d B_t + (F_t, F'_t) (\omega, \mathbf{Y}) d \mathbf{Y}_t,
  \qquad t \in [0,T],
\end{equation}
where $B = B (\omega)$ is Brownian motion and $\mathbf{Y} = (Y, \mathbb{Y})
\in \mathscr{C}   {  \equiv \mathscr{C}_T  \subset  \mathscr{C}^{\alpha}  ([0, T], \mathbb{R}^{d_Y})}$, a class of deterministic rough paths, on a fixed H\"older 
scale with exponent $\alpha \in (1/3,1/2]$. All coefficients are assumed progressive
in $(t, \omega)$ and $(F_t, F'_t) (\omega, \mathbf{Y})$ is a stochastic
controlled rough path in the sense of \cite[Definition 3.1]{FHL21}; this allows to give
bona fide integral meaning to \eqref{RIP1} and the reader can also find an It{\^o} formula for such processes in \cref{eq:roughItoformula_roughItoprocess} below. 

\begin{definition} \label{def:stochasticcontrolledroughpaths} (see \cite[Definition 3.1]{FHL21})
    Let $p \in [2,\infty)$ and $q \in [p,\infty]$. 
    We say that $(F,F')=(F_t,F'_t)(\omega,\BY)$ is a stochastic controlled rough path in $\mathbf{D}_Y^{2\alpha}L_{p,q}([0,T], \mathbf{\Omega};\mathrm{Lin}(\R^{d_Y},\R^{d_X}))$ if \begin{equation*}
        (F,F') : \Omega \times [0,T] \to \mathrm{Lin}(\R^{d_Y},\R^{d_X}) \times \mathrm{Lin}(\R^{d_Y}, \mathrm{Lin}(\R^{d_Y},\R^{d_X}))
    \end{equation*}
    are progressively measurable and\footnote{As usual, $L_q$ are the classical Lebesgue spaces equipped with norm $\| \cdot \|_q = \E( |\cdot|^q)^\frac1q$.}
    \begin{multline*}
    \|(F,F')\|_{Y;2\alpha;p,q} := \sup_{0 \le s < t \le T} \Big( \frac{\| \E(|\delta F_{s,t}|^p \mid \mathcal{F}_s)^{1/p}\|_q}{|t-s|^\alpha} +  \|F'_t\|_q  \\ 
    + \frac{\| \E(|\delta F'_{s,t}|^p \mid \mathcal{F}_s)^{1/p}\|_q}{|t-s|^\alpha} + \frac{\| \E(\delta F_{s,t} - F'_s \delta Y_{s,t} \mid \mathcal{F}_s)\|_q}{|t-s|^{2\alpha}} \Big) <+\infty. 
\end{multline*} 
If $p=q$, we write $\mathbf{D}_Y^{2\alpha}L_{p}([0,T],\mathbf{\Omega};\mathrm{Lin}(\R^{d_Y},\R^{d_X}))$ instead of $\mathbf{D}_Y^{2\alpha}L_{p,p}([0,T],\mathbf{\Omega};\mathrm{Lin}(\R^{d_Y},\R^{d_X}))$. 
If $F,F'$ are deterministic, we write $(F,F') \in \mathscr{D}_Y^{2\alpha}([0,T],\mathrm{Lin}(\R^{d_Y},\R^{d_X}))$ and we retrieve the notion of controlled rough paths of \cite[Definition 4.6]{FH20}. 
\end{definition}

From a stochastic calculus perspective, \eqref{RIP1} generalizes classical
It{\^o}-diffusions $X = X (\omega)$, with dynamics
$ d X_t = a_t (\omega) d t + \sigma_t (\omega) d B_t$,
which accommodates solutions to It{\^o} stochastic differential equations, $$ X_t = b_t (X_t, \omega) \, d t +  \sigma_t (X_t, \omega) \, d B_t.$$
From a rough path perspective, (\ref{RIP1}) generalizes rough integration, with dynamics
\begin{equation*}
  \label{Dav} d X_t = (\phi_t, \phi'_t) \,  d \mathbf{Y}_t \Leftrightarrow \begin{cases}
      \delta  X_{s, t} \approx_{3 \alpha} \phi_s \delta Y_{s, t} + \phi'_s \mathbb{Y}_{s, t} \\
      \delta \phi_{s, t} \approx_{2 \alpha} \phi_s' \delta Y_{s, t} \\
      \delta \phi'_{s,t} \approx_{\alpha} 0
  \end{cases} 
\end{equation*}
where $(\phi_t, \phi'_t) = (\phi_t, \phi'_t) (\mathbf{Y})$ is a given (deterministic) $Y$-controlled rough path and by $ \Psi_{s,t} \approx_{\gamma} \Phi_{s,t}$ we mean $| \Psi_{s,t} - \Phi_{s,t} | \le C |t-s|^\gamma$ for some constant $C$, uniformly on $[0,T]$.
This accommodates solutions to the ``contextualized'' (by which we mean explicit
$\mathbf{Y}$-dependence in the coefficient fields) rough differential equation (RDE)
\begin{equation}
  \label{RDE1} d X_t = f (X_t ; \mathbf{Y}) d \mathbf{Y}_t
  \end{equation}
via the usual Davie expansion $\phi_t := f (X_t ; \mathbf{Y})$, $  \phi'_t := ((D f) f) (X_t ; \mathbf{Y})$. This also accommodates the case of regular time-dependence $f (t,X_t ; \mathbf{Y})$  in \eqref{RDE1}; and more generally RDEs of the form
\begin{equation*}
  \label{RDE2} d X_t = (f_t, f_t') (X_t ; \mathbf{Y}) d {\mathbf{Y}_t}  ,
\end{equation*}
  in which case $\phi_t =f_t (X_t ; \mathbf{Y})$, $\phi'_t = ((D f_t) f_t + f_t')(X_t ; \mathbf{Y})$. 
 Main interest in $\eqref{RIP1}$ comes from rough SDEs 
\cite{FHL21}
\begin{equation}
  \label{RSDE1} d X_t = b_t (X_t, \omega ; \mathbf{Y }) d t + \sigma_t (X_t,
  \omega ; \mathbf{Y }) d B_t + (f_t, f'_t) (X_t, \omega ; \mathbf{Y}) d
  \mathbf{Y}_t,
\end{equation}
solutions $X = X^{\mathbf{Y}}$ of which provide natural examples of rough
It{\^o} processes, with
\[ (F_t, F'_t) (\omega, \mathbf{Y}) = (f_t (X_t, \omega ; \mathbf{Y}),\ ((D  f_t) f_t + f_t') (X_t, \omega ; \mathbf{Y})) . \]

The (random) coefficients $f$ and $f'$ in \eqref{RSDE1} are typically assumed to be stochastically $Y$-controlled in the sense of one of the two following definitions. 

\begin{definition} \label{def:stochasticcontrolledvectorfields_intro}
    (see \cite[Definition 2.3]{BFS24})
    Let $p \in [2,\infty)$, $q \in [p,\infty]$ and let $\beta = N + \kappa > 1$ for some $N \in \mathbb{N}_{\ge 0}$ and $\kappa \in (0,1]$.
    We say that $(f,f') = (f_t(x, \omega;\BY), f'_t(x, \omega;\BY))$ is a stochastic controlled vector field in $\mathbf{D}_Y^{2\alpha}L_{p,q} \mathcal{C}^\beta_b([0,T], \mathbf{\Omega}; \R^{d_X})$ if \begin{equation*}
        (f,f') : \Omega \times [0,T] \to \mathcal{C}^\beta_b(\R^{d_X}; \mathrm{Lin}(\R^{d_Y}, \R^{d_X})) \times \mathcal{C}^{\beta-1}_b(\R^{d_X}; \mathrm{Lin}(\R^{d_Y},\mathrm{Lin}(\R^{d_Y}, \R^{d_X})))
    \end{equation*}
   is progressively measurable, $\|(f , f') \|_\beta := \sup_{t \in [0,T]} \| |f_t|_{\mathcal{C}^\beta_b} \|_q + \sup_{t \in [0,T]} \| |f'_t|_{\mathcal{C}^{\beta-1}_b} \|_q < +\infty$ and 
    \begin{equation*}
        \llbracket f,f' \rrbracket_{Y;2\alpha;\beta;p,q} := \sum_{j=0}^N \llbracket \delta D_x^j f \rrbracket_{\alpha;p,q} + \sum_{j=0}^{N-1} ( \llbracket \delta D_x^j f' \rrbracket_{\alpha;p,q} + \llbracket R^{D_x^j f} \rrbracket_{2\alpha;p,q} ) < +\infty.  
    \end{equation*}
    In the above, $D^0_x f = f$, $\delta f_{s,t} := f_t - f_s$,  $R^{D^j_xf}_{s,t} := D^j_x f_t - D^j_x f_s - D^j_x f'_s \delta Y_{s,t}$ and, for any $\zeta=\zeta_{s,t}(x,\omega)$, $$\llbracket \zeta \rrbracket_{\gamma;p,q} := \sup_{0 \le s < t \le T} \frac{ \|\E(\sup_{x \in \R^{d_X}} |\zeta_{s,t}(x,\omega)|^p \mid \mathcal{F}_s )^{1/p}\|_q}{|t-s|^\gamma}. $$   
    If $f, f'$ and $\zeta$ are deterministic, we simply write $(f,f') \in \mathscr{D}_Y^{2\alpha} \mathcal{C}^\beta_b ([0,T]; \R^{d_X})$, $\llbracket f,f' \rrbracket_{Y;2\alpha;\beta}$ and $\llbracket \zeta \rrbracket_{\gamma}$. 
\end{definition}

\begin{definition} \label{def:stochasticcontrolledlinearvectorfield} (see \cite[Definition 3.2]{BCN24})
    Let $p \in [2,\infty)$, $q \in [p,\infty]$. 
    We say that $(f,f') = (f_t(x, \omega;\BY), f'_t(x, \omega;\BY))$ is a stochastic controlled linear vector field in $\mathbf{D}_Y^{2\alpha}L_{p,q} \mathrm{Lin}([0,T], \mathbf{\Omega}; \R^{d_X})$ if
    \begin{equation*}
        (f,f') : \Omega \times [0,T] \to \mathrm{Lin}(\R^{d_X}; \mathrm{Lin}(\R^{d_Y}, \R^{d_X})) \times \mathrm{Lin}(\R^{d_X}; \mathrm{Lin}(\R^{d_Y},\mathrm{Lin}(\R^{d_Y}, \R^{d_X})))
    \end{equation*}
    are progressively measurable, $\|(f,f')\|_\infty := \sup_{t \in [0,T]} \| |f_t|_{\mathrm{Lin}}\|_\infty + \sup_{t \in [0,T]} \| |f'_t|_{\mathrm{Lin}}\|_\infty < +\infty$ and \begin{multline*}
        \sup_{0 \le s < t \le T} \Big( \frac{\| \E(|\delta f_{s,t}|_{\mathrm{Lin}}^p \mid \mathcal{F}_s)^{1/p}\|_q}{|t-s|^\alpha} + \frac{\| \E(|\delta f'_{s,t}|_{\mathrm{Lin}}^p \mid \mathcal{F}_s)^{1/p}\|_q}{|t-s|^\alpha} + \frac{\| \E(\delta f_{s,t} - f'_s \delta Y_{s,t} \mid \mathcal{F}_s)\|_q}{|t-s|^{2\alpha}} \Big) < +\infty. 
    \end{multline*}    
    In case of deterministic $f$ and $f'$, we write $\mathscr{D}_Y^{2\alpha}\mathrm{Lin}([0,T];\R^{d_X})$
\end{definition}

We briefly recall the notion of solution for an RSDE \eqref{RSDE1} and we also collect three versions of the so-called rough stochastic It\^o formula. 

\begin{definition} (\cite[Definition 4.2]{FHL21}) \label{def:integrablesolutionsRSDEs}
    We say that $X=X^\BY_t(\omega)$ is an $L_{p,q}$-integrable solution to \eqref{RSDE1} if it is continuous and adapted, and the following are satisfied \begin{equation*}
    \begin{cases}
        \text{$\int_0^T |b_r(X_r,\omega;\BY)| \, dr$ and $\int_0^T |\sigma_r \sigma^\top_r(X_r,\omega;\BY)| \, dr$ are finite $\mathbb{P}$-a.s. } \\
        (f_t (X_t, \omega ; \mathbf{Y}),\ ((D  f_t) f_t + f_t') (X_t, \omega ; \mathbf{Y})) \in \mathbf{D}_Y^{2\alpha}L_{p,q}([0,T] , \mathbf{\Omega}; \mathrm{Lin}(\R^{d_Y},\R^{d_X})) \\
        \delta X_{s,t} = \int_s^t b_r(X_r,\omega;\BY) \, dr + \big( \int_s^t \sigma_r(X_r, \cdot; \BY) \, dB_r \big)(\omega) + f_s(X_s,\omega;\BY) \delta Y_{s,t} \\
        \qquad \qquad  + ((Df_s)f_s + f'_s)(X_s,\omega;\BY) \mathbb{Y}_{s,t} + X^\natural_{s,t}
    \end{cases}
\end{equation*}
    with $ \sup_{0 \le s < t \le T} \frac{\| \E(|X^\natural_{s,t}|^p \mid \mathcal{F}_s)^{1/p}\|_q}{|t-s|^{2\alpha}} < +\infty$ and $ \sup_{0 \le s < t \le T} \frac{\|\E(X^\natural_{s,t} \mid \mathcal{F}_s)\|_q}{|t-s|^{3\alpha}} < +\infty$. 
\end{definition}
Well-posedness results for \eqref{RSDE1} were obtained, with $p\in [2, \infty), \,  q=\infty$ for bounded non-linear coefficients in \cite{FHL21}. The case of linear coefficients with $p=q \in [2,\infty)$ was considered in \cite[Section 3]{BCN24}, see also \cite[Section 3]{HZ25}.

\noindent {\em Ito's formula.}  We say that a {\em rough stochastic It\^o formula} holds for a rough It\^o process of the form \eqref{RIP1} if, for test function $\varphi,$ 
 $\varphi(X)$ is again a rough It\^o process with \begin{multline*} 
    d \varphi(X_t) = \Big( D\varphi(X_t) A_t(\omega,\BY) + \frac{1}{2} D^2 \varphi(X_t) : (\Sigma_t \Sigma^\top_t)(\omega,\BY) \Big) \, dt  + \frac{1}{2} D^2 \varphi(X_t) : (F_t \, d[\BY]_t F_t^\top) \\ + D\varphi(X_t) \Sigma_t(\omega,\BY) \, dB_t + \big(D\varphi(X_t) F_t, D^2\varphi(X_t): (F_t F_t^\top) + D\varphi(X_t) F'_t \big)(\omega,\BY) \, d\BY_t
\end{multline*}
where $:$ denotes the Frobenius matrix inner product and  $d[\BY]_t$ is understood as a Young integral. Both the rough stochastic integral and the Young integral in this identity are defined as limits of (compensated) Riemann sums in $L_2$.

\begin{theorem} \label{eq:roughItoformula_roughItoprocess}
  
A \emph{rough stochastic It\^o formula} holds if one of the three sets of assumptions is satisfied: 
 \begin{itemize}
        \item[$1$.]{(\cite[Theorem 4.13]{FHL21})} Take test functions $\varphi \in \mathcal{C}^\gamma_b(\R^{d_X};\R)$ for some $\gamma \in (\frac{1}{\alpha},3]$.
        Let $A, \Sigma$ be bounded, and let $(F,F') \in \mathbf{D}_Y^{2\alpha}L_{4,\infty}$ be with $\|F_0\|_4 <+\infty$.  
        \item[$2$.]{(\cite[Proposition 2.16]{BCN24})}  Take test functions $\varphi \in \mathcal{C}^\gamma_{pol,m}(\R^{d_X};\R)$ \footnote{For $N,m \in \mathbb{N}_{\ge 0}$ and let $\kappa \in (0,1]$, we define \begin{equation} \label{eq:testfunctionspolynomialgrowth}
        \mathcal{C}^{N+\kappa}_{pol,m} (\R^{d_X};\R) := \left\{ \varphi \in \mathcal{C}^{N+\kappa}(\R^{d_X};\R) \, \bigg| \,  \sup_{x \in \R^{d_X}} \frac{|\varphi(x)| + \sum_{j=1}^N |D^j \varphi(x)|}{1+|x|^m} 
        < +\infty \right\}
        \end{equation}
        } 
        for some $\gamma \in (\frac{1}{\alpha},3]$ and $m \in \mathbb{N}_{\ge 1}$.
        For any $p \in [2,\infty)$, assume $\sup_{t \in [0,T]} \|A_t\|_p + \| \Sigma_t\|_p < + \infty$ and $(F,F') \in \mathbf{D}_Y^{2\alpha}L_{p}$ with $\|F_0\|_p <+\infty$.
        \item[$3$.]{(\cite[Proposition 4.14]{BFS24})} Take as a test function $\varphi(x) := e^x$. Let $\E(\int_0^T |A_t|^p \, dt) < +\infty$ for any $p \in [2,\infty)$, $\Sigma \equiv 0$ and $(F,F') \in \mathbf{D}_Y^{2\alpha}L_{1,\infty}$.
        Then $\sup_{t \in [0,T]} \|e^{X_t}\|_p < +\infty$ for any $p \in [2,\infty)$ and the rough stochastic It\^o formula holds.  
\end{itemize}
       
\end{theorem}

\subsection{Regular solutions to rough backward equations} \label{appendix}
In \cref{section:Zakai_uniqueness} we show how uniqueness for the measure-valued rough Zakai equation \eqref{eq:roughZakai_CITE} follows from the existence of a solution to the function-valued backward equation \eqref{eq:dualroughequation}. 
The framework developed in \cite{BFS24} covers backward rough PDEs driven by weakly geometric rough paths only. In this section we show how to extend their existence result to general (possibly non-weakly geometric) drivers.
Let the setting of \cref{section:Zakai_uniqueness} be in force. 
Define $\BY^{\circ} = \BY + \frac{1}{2} (0, \delta
[\BY])$ and write $\circ d \BY $:= $d
\tmmathbf{\BY^{\circ}}$. Notice that $\BY^{\circ} \in
\mathscr{C}^{0,\alpha, 1}([0, T] , \mathbb{R}^{d_Y})$ is a weakly geometric rough path, that is \begin{equation*}
    (\mathbb{Y}^\circ_{s,t})^{\kappa \lambda} + (\mathbb{Y}^\circ_{s,t})^{\lambda \kappa} = \delta Y^\kappa_{s,t} \delta Y^\lambda_{s,t} 
\end{equation*}
for any $0 \le s \le t \le T$ and for any $\kappa,\lambda=1,\dots,d_Y$. 
Notice that the RSDE \eqref{eq:roughSDE_filtering} can be written as  \begin{equation} \begin{aligned}
    d X^{\BY}_t &=  b^{\circ,[\BY]} (t, X^{\BY}_t; \BY )     d t + \sigma (t, X^{\BY}_t;\BY) d B_t +     f (t, X^{\BY}_t ; \BY) \,\circ d \BY_t ,
\end{aligned} \label{eq:RSDE_geometric}
\end{equation}
with \begin{equation*}
    b^{\circ, [\BY]} (t, x; \BY)  :=  b (t, x; \BY) - \frac{1}{2}   \sum_{\kappa, \lambda = 1}^{d_Y} \Big(\sum_{j=1}^{d_X}\partial_{x^j} f_{\lambda} (t, x; \BY)   f^j_{\kappa} (t, x; \BY)  +   f'_{\lambda \kappa} (t, x; \BY)\Big) \dot{[\BY]}_t^{\kappa \lambda}.
\end{equation*}
Furthermore, the RSDE \eqref{eq:Girsanovexponential_filtering} can be written as
\begin{equation} \label{eq:Girsanovexponential_geometric}
    d Z^{\BY}_t = Z^\BY_t c^{\circ,[\BY]} (t, X^{\BY}_t ; \BY) \, dt + Z^\BY_t h (t, X_t^{\BY} ;\BY) \,  \circ d\BY_t ,
\end{equation}
where 
\begin{align*}
    c^{\circ, [\BY]} (t, x; \BY) & := 
    - \frac{1}{2} \sum_{\kappa, \lambda = 1}^{d_Y} \Big( h_{\kappa}   (t, x; \BY)   h_\lambda (t,   x; \BY) + \sum_{j=1}^{d_X} \partial_{x^j} h_{\lambda}    (t, x; \BY) f^j_{\kappa} (t, x;\BY) \\
    & \qquad + h'_{\lambda \kappa} (t, x; \BY)\Big) \dot{[\BY]}_t^{\kappa \lambda}
\end{align*}
Define the time dependent second order differential operator $L^{\circ, \BY}_t$ as 
\be\label{def:Lo-operator}
L^{\circ, \BY}_t \varphi (x) \assign \frac{1}{2} \sum_{i, j = 1}^{d_X}     \partial_{i j}^2 \varphi (x) a^{i j} (t, x; \BY) + \sum_{i = 1}^{d_X}     \partial_i \varphi (x) (b^{\circ, [\BY]})^i (t, x ; \BY) + \varphi     (x) c^{\circ, [\BY]} (t, x; \BY) 
\ee
  with $a^{i j} (t, x; \BY) :=  \sum_{\theta = 1}^{d_B} \sigma_{\theta}^i(t, x; \BY) \sigma^j_{\theta} (t, x; \BY)$. 


\begin{proposition} \label{prop:equivalentformofroughZakai-back} $u$ is a function-valued solution to
  \begin{equation*} 
    u_t (x) = u_T(x) + \int_t^T \big( A^{\BY}_r u_r(x) - (\Gamma_r^\BY)' u_r(x) \dot{[\BY]}_r \big) \, dr + \int_t^T \Gamma^{\BY}_r u_r(x)  \, d \BY_r 
  \end{equation*}
  if and only it solves 
  \begin{equation} \label{eq:roughKolmogorov_Stratoform}
    u_t(x) = u_T(x) + \int_t^T L^{\circ,\BY}_r u_r(x) \, dr + \int_t^T \Gamma^{\BY}_r u_r(x) \,  \circ d \BY_r .
  \end{equation}
  
\end{proposition}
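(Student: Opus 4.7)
The plan is to interpret a function-valued solution of the Stratonovich-type backward equation \eqref{eq:roughKolmogorov_Stratoform} through the same Davie-expansion framework of \cref{def:solutionbackwardroughPDE}, but with $(A^{\BY}_r - (\Gamma^{\BY}_r)'\dot{[\BY]}_r)$ replaced by $L^{\circ,\BY}_r$ in the drift integral and with $\mathbb{Y}^{\kappa\lambda}_{s,t}$ in item (iii) replaced by $(\mathbb{Y}^\circ)^{\kappa\lambda}_{s,t} = \mathbb{Y}^{\kappa\lambda}_{s,t} + \tfrac12 \delta[\BY]^{\kappa\lambda}_{s,t}$. Conditions (i) and (ii) of the definition involve only $Y = \pi_1 \BY$ and the bracket $[\BY]$, so they are shared by both equations. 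Under this reading the equivalence reduces to converting one Davie expansion into the other, which decouples into an analytical step and an algebraic step.

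Analytically, starting from the Davie expansion \cref{def:solutionbackwardroughPDE}(iii), I substitute $\mathbb{Y}^{\kappa\lambda}_{s,t} = (\mathbb{Y}^\circ)^{\kappa\lambda}_{s,t} - \tfrac12\delta[\BY]^{\kappa\lambda}_{s,t}$ and transfer the resulting bracket piece into the drift. Since $\delta[\BY]^{\kappa\lambda}_{s,t} = \int_s^t \dot{[\BY]}^{\kappa\lambda}_r \, dr$ and $r \mapsto (\Gamma^{\BY}_r)_\kappa(\Gamma^{\BY}_r)_\lambda u_r - (\Gamma^{\BY}_r)'_{\lambda\kappa} u_r$ is $\alpha$-Hölder in time by \cref{def:solutionbackwardroughPDE}(ii), replacing its value at $t$ by its Lebesgue integral over $[s,t]$ costs at most $O(|t-s|^{1+\alpha})$, safely absorbed into the Davie remainder (which only needs to be $o(|t-s|)$, and one has $1+\alpha > 3\alpha$ for $\alpha<1/2$). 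The Davie expansion is thereby rewritten in terms of $(\mathbb{Y}^\circ)^{\kappa\lambda}_{s,t}$ and the modified drift
\begin{equation*}
A^{\BY}_r u_r - \tfrac12 \sum_{\kappa,\lambda} (\Gamma^{\BY}_r)'_{\lambda\kappa} u_r \,\dot{[\BY]}^{\kappa\lambda}_r - \tfrac12 \sum_{\kappa,\lambda} (\Gamma^{\BY}_r)_\kappa (\Gamma^{\BY}_r)_\lambda u_r \,\dot{[\BY]}^{\kappa\lambda}_r.
\end{equation*}

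Algebraically, it then remains to verify the pointwise identity
\begin{equation*}
A^{\BY}_r \varphi = L^{\circ,\BY}_r \varphi + \tfrac12 \sum_{\kappa,\lambda} \big((\Gamma^{\BY}_r)_\kappa(\Gamma^{\BY}_r)_\lambda + (\Gamma^{\BY}_r)'_{\lambda\kappa}\big)\varphi \,\dot{[\BY]}^{\kappa\lambda}_r, \qquad \varphi \in \mathcal{C}^2_b.
\end{equation*}
Expanding $(\Gamma^{\BY}_r)_\kappa(\Gamma^{\BY}_r)_\lambda\varphi$ via the product rule and exploiting the symmetry of $\dot{[\BY]}^{\kappa\lambda}$ (inherited from the symmetric bracket tensor), the $f^i_\kappa f^j_\lambda$ second-order contribution matches the non-$\sigma\sigma^\top$ part of $\bar a^{[\BY],ij}$; the two symmetric $fh$ first-order contributions combine to cancel the $f^i_\kappa \dot{[\BY]}^{\kappa\lambda} h_\lambda$ correction in $\bar b^{[\BY],i}$; and the remaining $\partial_j f^i_\lambda f^j_\kappa + (f'_{\lambda\kappa})^i$ and $h_\kappa h_\lambda + \partial_j h_\lambda f^j_\kappa + h'_{\lambda\kappa}$ pieces reproduce precisely the defining expressions of $b^{\circ,[\BY],i}$ and $c^{\circ,[\BY]}$ in \eqref{def:Lo-operator}. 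Combining the analytical and algebraic steps shows that $u$ satisfies the Davie expansion corresponding to \eqref{eq:roughKolmogorov_Stratoform}, and the reverse implication follows by running the same substitutions backwards.

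The proof involves no genuine analytical obstacle; the main work is careful bookkeeping of tensor indices and symmetrizations, and checking that the errors introduced in transferring bracket corrections between the quadratic and drift parts of the Davie expansion remain $o(|t-s|)$. Both are comfortably covered by \cref{assumptionU} and the Hölder regularity built into \cref{def:solutionbackwardroughPDE}(ii).
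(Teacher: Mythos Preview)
Your proof is correct and follows essentially the same route as the paper: substitute $\mathbb{Y}^{\kappa\lambda} = (\mathbb{Y}^\circ)^{\kappa\lambda} - \tfrac12\delta[\BY]^{\kappa\lambda}$ in the Davie expansion, use the $\alpha$-H\"older regularity from condition~(ii) to absorb the bracket correction into the drift at cost $O(|t-s|^{1+\alpha})$, and then verify the pointwise identity $A^{\BY}_r = L^{\circ,\BY}_r + \tfrac12\sum_{\kappa,\lambda}\big((\Gamma^{\BY}_r)_\kappa(\Gamma^{\BY}_r)_\lambda + (\Gamma^{\BY}_r)'_{\kappa\lambda}\big)\dot{[\BY]}^{\kappa\lambda}_r$ by direct expansion. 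The paper carries out exactly these two steps, with the same use of the symmetry of $\dot{[\BY]}^{\kappa\lambda}$ to reconcile the index orderings.
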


\begin{proof}
  For sake of a more compact notation, we adopt Einstein's repeated indeces summation convention. The proof follows by comparing the terms multiplying $\mathbb{Y}$ in the Davie expansion of \cref{def:solutionbackwardroughPDE}.\textit{(iii)} with the Lebesgue integral. 
  Notice that \begin{align*}
      &L^{\circ,\BY}_t u_t (x)  = A^{\BY}_t u_t (x)  -
    \frac{1}{2} \partial_{i j}^2 u_t (x) f_{\kappa}^i (t, x; \BY) \dot{[\BY]}_t^{\kappa \lambda} f_{\lambda}^j (t, x, Y_t)  \\
    &   \quad  - \partial_i u_t (x) \Big(f_{\kappa}^i (t, x; \BY)h_{\lambda} (t, x, Y_t)  +  \frac{1}{2} (\partial_{x^j} f_{\lambda}^i (t, x; \BY) f^j_{\kappa}(t, x; \BY) +(f'_{\lambda \kappa})^i (t, x; \BY)) \Big) \dot{[\BY]}_t^{\kappa \lambda}    \\
    &   \quad   - \frac{1}{2} u_t (x) \Big(h_{\kappa} (t, x; \BY) h_{\lambda} (t, x; \BY)  + \nobracket (\partial_{x^j} h_{\lambda} (t, x, Y_t) f^j_{\kappa}(t, x; \BY) +  h'_{\lambda\kappa} (t, x; \BY)) \Big) \dot{[\BY]}_t^{\kappa \lambda} .   
  \end{align*}
  Recall that, by definition, $\delta [\BY]_{s, t} = \int_s^t \dot{[\BY]}_r \, dr$ and $(\delta \dot{[\BY]}_{s, t})^{\kappa \lambda} = (\delta  \dot{[\BY]}_{s, t})^{\lambda \kappa}$. Therefore, \begin{align*}
      & ((\Gamma^{\BY}_t)_{\kappa}(\Gamma_t^{\BY})_{\lambda} u_t(x) -(\Gamma_t^\BY)'_{\kappa \lambda}  u_t(x)) \mathbb{Y}^{\kappa \lambda}_{s, t}  \\
      & =  ((\Gamma^{\BY}_t)_{\kappa}(\Gamma_t^{\BY})_{\lambda} u_t(x) -(\Gamma_t^\BY)'_{\kappa \lambda}  u_t(x)) (\mathbb{Y}^{\circ})^{\kappa \lambda}_{s, t}  - \frac{1}{2} ((\Gamma^{\BY}_t)_{\kappa}(\Gamma_t^{\BY})_{\lambda} u_t(x) -(\Gamma_t^\BY)'_{\kappa \lambda}  u_t(x)) \int_s^t \dot{[\BY]}_r^{\kappa \lambda} \, d r\\
    & =  ((\Gamma^{\BY}_t)_{\kappa}(\Gamma_t^{\BY})_{\lambda} u_t(x) -(\Gamma_t^\BY)'_{\kappa \lambda}  u_t(x)) (\mathbb{Y}^{\circ})^{\kappa \lambda}_{s, t}   - \frac{1}{2} \int_s^t ((\Gamma^{\BY}_r)_{\kappa}(\Gamma_r^{\BY})_{\lambda} u_r(x) -(\Gamma_r^\BY)'_{\kappa \lambda}  u_r(x)) \dot{[\BY]}_r^{\kappa \lambda} \, d r + R_{s, t} 
  \end{align*}
  where $| R_{s, t} | \leqslant C_{\varphi} | t - s |^{1 + \alpha}$. Indeed,  recall that $| ((\Gamma^{\BY}_t)_{\kappa}(\Gamma_t^{\BY})_{\lambda} u_t(x) -(\Gamma_t^\BY)'_{\kappa \lambda}  u_t(x))- ((\Gamma^{\BY}_s)_{\kappa}(\Gamma_s^{\BY})_{\lambda} u_s(x) -(\Gamma_s^\BY)'_{\kappa \lambda}  u_t(x))
  | \le C_{\varphi} | t - s |^{\alpha}$.
  We can write \begin{align*}
    &\frac{1}{2} (\Gamma^{\BY}_t)_{\kappa}    (\Gamma_t^{\BY})_{\lambda} u_t (x) \dot{[\BY]}_t^{\kappa \lambda}  \\
    & =  \frac{1}{2} \Big(\partial_i  ((\Gamma_t)_{\lambda} u_t) (x) f_{\kappa}^i (t, x; \BY) +   (\Gamma_t)_{\lambda} u_t (x) h_{\kappa} (t, x; \BY) \Big)   \dot{[\BY]}_t^{\kappa \lambda} \\
    &  =  \frac{1}{2} \Big(\partial^2_{i j} u_t (x) f_\kappa^i (t, x;\BY) f_{\lambda}^j (t, x; \BY) + \partial_i u_t (x) \partial_{x^j}  f_{\lambda}^i (t, x; \BY) f_{\kappa}^j (t, x; \BY)  \\
    & \quad + 2 \partial_i u_t (x) h_{\lambda} (t, x; \BY)
    f_{\kappa}^i (t, x ; \BY) + u_t (x) \partial_{x^i} h_{\lambda} (t, x;\BY) f_{\kappa}^i (t, x; \BY) \\
    & \quad + u_t (x) h_{\lambda} (t, x; \BY) h_{\kappa} (t, x; \BY ) \Big) \dot{[\BY]}_t^{\kappa \lambda} \quad  
  \end{align*}
  and
  \[
  \begin{split}
    \frac{1}{2} (\Gamma_t')_{\kappa \lambda } u_t (x)
    \dot{[\BY]}_t^{\kappa \lambda} & =  \frac{1}{2} \Big(\partial_i u_t (x) (f_{\kappa \lambda}')^i (t, x; \BY) + u_t (x) h_{\kappa \lambda}' (t, x ; \BY) \Big)    \dot{[\BY]}_t^{\kappa \lambda} .     
  \end{split}
  \]
  Putting everything together we have proved that \begin{multline*}
      \int_s^t \big( A^{\BY}_r u_r(x) -(\Gamma^\BY_r)_{\lambda \kappa}'u_r(x) \dot{[\BY]}^{\kappa\lambda}_r \big) \, dr
     + ((\Gamma^{\BY}_t)_{\kappa}
     (\Gamma_t^{\BY})_{\lambda} - (\Gamma_t^{\BY})'_{\kappa \lambda}) u_t(x) \mathbb{Y}_{s, t}^{\kappa \lambda} 
       \\ =  \int_s^t L^{\circ,\BY}_r u_r(x)
     \, dr 
      + ((\Gamma^{\BY}_t)_{\kappa} (\Gamma_t^{\BY})_{\lambda}
     - (\Gamma_t^{\BY})'_{\kappa \lambda }) u_t(x) 
     (\mathbb{Y}^{\circ})_{s, t}^{\kappa \lambda} + u_{s, t}^{\natural}    
  \end{multline*}
  with $u_{s, t}^{\natural} = o (| t - s |)$ as $| t - s | \rightarrow 0$. 
\end{proof}

The following assumptions (cf.\  \cite[Assumptions 4.4]{BFS24}) guarantee the existence of a (unique) solution to equation \eqref{eq:roughKolmogorov_Stratoform} . Notice that under \cref{assumptionU} they are automatically satisfied. 

\begin{assumption}{BFS-U} \label{assumptionBFSU} 
Let $\beta \in (\frac{1}{\alpha},3]$ and let $\theta \in [4,5]$ such that $2\alpha + (\theta-4)\alpha >1$. 
Let the following hold: \begin{itemize}
    \item $ [t \mapsto \bar b^{\circ,[\BY]}(t,\cdot \, ; \BY) :=  b^{\circ, [\BY]} (t, \cdot \, ; \BY) - \sum_{\kappa=1}^{d_Y} f_\kappa(t,\cdot \, ;\BY) h_1^\kappa(t;\BY)] \in  \mathcal{B}_b([0,T];\mathcal{C}^\beta_b(\R^{d_X};\R^{d_X}))$ and $$ t \mapsto |b^{\circ, [\BY]} (t, \cdot \, ; \BY)|_\infty + |D_x b^{\circ, [\BY]} (t, \cdot \, ; \BY)|_\infty + |D^2_{xx}b^{\circ, [\BY]} (t, \cdot \, ; \BY)|_\infty \quad \text{is continuous}, $$ with similar conditions for   $[ t \mapsto \sigma(t,\cdot \, ; \BY)] $ and $[ t \mapsto c^{\circ, [\BY]} (t, \cdot \, ; \BY)] $;
    \item $[t \mapsto (f (t, \cdot \, ;\BY),  f' (t, \cdot \, ; \BY))] \in \mathscr{D}_{Y}^{2 \alpha} \mathcal{C}_b^\theta$ 
    in the sense of \cref{def:stochasticcontrolledvectorfields_intro} ; 
    \item $[t \mapsto (h(t,\cdot \, ;\BY), h' (t,\cdot \, ; \BY))] \in \mathscr{D}_{Y}^{2 \alpha} \mathcal{C}_b^\theta$. 
\end{itemize}
\end{assumption} 
With a very similar argument as in the proof of \cref{prop:equivalentformofroughZakai-back}, one can deduce the following

\begin{proposition} \label{prop:equivalentformofroughZakai} $\mu$ is a measure-valued solution to
  \begin{equation*} 
    \mu_t (\varphi) = \mu_0 (\varphi) + \int_0^t \mu_r 
    (A^{\BY}_r \varphi) dr + \int_0^t \mu_r (\Gamma^{\BY}_r
    \varphi)  \, d \BY_r,
  \end{equation*}
  if and only it solves 
  \begin{equation*} \label{eq:roughZakai_Stratoform}
    \mu_t (\varphi) = \mu_0 (\varphi) + \int_0^t \mu_r 
    (L^{\circ,\BY}_r \varphi) dr + \int_0^t \mu_r (\Gamma^{\BY}_r
    \varphi) \, \circ d \BY_r .
  \end{equation*}
  
\end{proposition}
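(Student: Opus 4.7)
The strategy mirrors the one used in \cref{prop:equivalentformofroughZakai-back} for the backward equation: the equivalence is purely algebraic, obtained by converting $\mathbb{Y} \rightsquigarrow \mathbb{Y}^{\circ} = \mathbb{Y} + \tfrac{1}{2}\delta[\BY]$ in the Davie expansion and absorbing the bracket contribution into the Lebesgue integral, which transforms $A^{\BY}$ into $L^{\circ,\BY}$.

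The key identity to establish is
\begin{equation*}
  A^{\BY}_t \varphi(x) - L^{\circ,\BY}_t \varphi(x) \;=\; \tfrac12 \sum_{\kappa,\lambda=1}^{d_Y} \bigl( (\Gamma^{\BY}_t)_\kappa (\Gamma^{\BY}_t)_\lambda \varphi(x) + (\Gamma^{\BY}_t)'_{\lambda\kappa} \varphi(x) \bigr) \dot{[\BY]}_t^{\kappa \lambda},
\end{equation*}
for any $\varphi \in \mathcal{C}^3_b(\R^{d_X};\R)$. I would verify this by a straightforward expansion: writing out $(\Gamma^{\BY})_\kappa(\Gamma^{\BY})_\lambda\varphi$ produces a second-order part $\sum_{i,j} f^i_\lambda f^j_\kappa \partial^2_{ij}\varphi$, a first-order part $\sum_i [\sum_j f^j_\kappa \partial_j f^i_\lambda + h_\lambda f^i_\kappa + h_\kappa f^i_\lambda]\partial_i\varphi$ and a zero-order part $[h_\kappa h_\lambda + \sum_j f^j_\kappa \partial_j h_\lambda]\varphi$; adding $(\Gamma^{\BY})'_{\lambda\kappa}\varphi$ and contracting against the symmetric tensor $\dot{[\BY]}^{\kappa\lambda}$ (so that $h_\lambda f^i_\kappa$ and $h_\kappa f^i_\lambda$ coalesce) reproduces exactly the $\dot{[\BY]}$-correction distinguishing $\bar a^{[\BY]},\bar b^{[\BY]}$ from $a,b^{\circ,[\BY]}$ and the correction $c^{\circ,[\BY]}$.

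Granted the identity, I would then compare the two candidate Davie expansions term by term. For the rough integrals, the definition of $\BY^{\circ}$ gives $\mathbb{Y}_{s,t} = \mathbb{Y}^\circ_{s,t} - \tfrac12 \delta[\BY]_{s,t}$, so that
\begin{equation*}
  \mu_s\bigl((\Gamma_s^{\BY})_\kappa(\Gamma_s^{\BY})_\lambda \varphi + (\Gamma_s^{\BY})'_{\lambda\kappa}\varphi\bigr)\mathbb{Y}^{\kappa\lambda}_{s,t} = \mu_s(\cdots)(\mathbb{Y}^\circ)^{\kappa\lambda}_{s,t} - \tfrac12 \int_s^t \mu_r(\cdots)\,\dot{[\BY]}_r^{\kappa\lambda}\,dr + R_{s,t},
\end{equation*}
with $|R_{s,t}|\lesssim |t-s|^{1+\alpha}$ using condition (ii) of \cref{def:solutionroughZakai} and the $C^1$-regularity of $[\BY]$ to upgrade $\mu_s(\cdots)\delta[\BY]_{s,t}$ to the time-integral up to a $o(|t-s|)$ error. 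Plugging this into the Davie expansion (iii) and invoking the algebraic identity above converts $\int_s^t \mu_r(A^{\BY}_r\varphi)\,dr$ into $\int_s^t \mu_r(L^{\circ,\BY}_r\varphi)\,dr$, establishing that a Davie expansion with respect to $d\BY$ and $A^{\BY}$ is equivalent to one with respect to $\circ d\BY$ and $L^{\circ,\BY}$; the converse implication is symmetric.

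The remaining checks are routine: the Gubinelli-type bounds (ii) of \cref{def:solutionroughZakai} transfer between the two formulations unchanged (the integrand $\mu_r(\Gamma^{\BY}_r\varphi)$ and its Gubinelli derivative are the same), and the sewing lemma gives the rough integral version (iii)' as in \cref{prop:roughZakai_integralform}. The only genuine point of care, and hence the main obstacle, is to ensure the symmetrization arguments on indices $\kappa,\lambda$ are performed with the correct Frobenius-type pairing against the symmetric bracket tensor $\dot{[\BY]}$; otherwise the computation is bookkeeping closely parallel to \cref{prop:equivalentformofroughZakai-back}.
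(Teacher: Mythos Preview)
Your proposal is correct and follows essentially the same approach as the paper, which simply refers back to the proof of \cref{prop:equivalentformofroughZakai-back} and says the argument is ``very similar.'' Your key algebraic identity $A^{\BY}_t - L^{\circ,\BY}_t = \tfrac12\sum_{\kappa,\lambda}[(\Gamma^{\BY}_t)_\kappa(\Gamma^{\BY}_t)_\lambda + (\Gamma^{\BY}_t)'_{\lambda\kappa}]\dot{[\BY]}^{\kappa\lambda}_t$ and the conversion of the Davie expansion via $\mathbb{Y}=\mathbb{Y}^\circ - \tfrac12\delta[\BY]$ with the $|t-s|^{1+\alpha}$ remainder estimate from condition (ii) are exactly the ingredients used in the backward case, adapted to the forward sign convention on $(\Gamma^{\BY})'$.
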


\section{Exponentiation of second Wiener chaos functionals} \label{app:EWC}

\begin{proposition}
 \label{prop:exp2chaos}
 Let $(W_t)_{t\in[0,T]}$ be a standard Wiener process.\footnote{The case of $d$-dimensional Wiener process is analogous, suffices to work with $L^2([0,T],\mathbb{R}^d)$.} 
 For $h\in L^2[0,T]$ and symmetric $f\in L^2([0,T]^2)$, let
 \[
 I_1(h)=\int_0^T h(t)\,dW_t,\qquad
 I_2 (f)=\iint_{[0,T]^2} f(s,t)\,dW_s\,dW_t ,
 \]
 and set $X=I_1(h)+I_2(f)$.
 Let $A:L^2[0,T]\to L^2[0,T]$ be the self-adjoint Hilbert--Schmidt operator with kernel $f$,
 \[
 (Ag)(t)=\int_0^T f(t,s)\,g(s)\,ds .
 \]
 If $(I-2A)$ is strictly positive, equivalently $\sup\sigma(A)<\tfrac12$, then
 \[
 \E\big[e^{\,X}\big]
 = \det\nolimits_{2}(I-2A)^{-\frac12}\;
 \exp\!\Big(\frac12\,\big\langle (I-2A)^{-1}h,\,h\big\rangle\Big),
 \]
where $\det_2$ is the Carleman–Fredholm determinant.
\end{proposition}

 \begin{proof}[Proof (sketch)]
 Diagonalize $A$ in $L^2[0,T]$: there exist eigenpairs $(\lambda_i,e_i)$ with $Ae_i=\lambda_i e_i$ and i.i.d.\ $\zeta_i:=\int_0^T e_i\,dW\sim N(0,1)$. Writing $\beta_i:=\langle h,e_i\rangle$, one has the chaos decomposition
 \[
 X=\sum_i\big(\beta_i\zeta_i+\lambda_i(\zeta_i^2-1)\big).
 \]
 Independence and the one-dimensional Gaussian identity
 \[
 \E\big[e^{\,\beta Z+\lambda Z^2}\big] \;=\; (1-2\lambda)^{-1/2}\,
 \exp\!\Big(\frac{\beta^2}{2(1-2\lambda)}\Big),\qquad Z\sim N(0,1),\; 1-2\lambda>0,
 \]
 give
 \[
 \E[e^{\,X}]
 =\prod_i e^{-\lambda_i}(1-2\lambda_i)^{-1/2}\,
 \exp\!\Big(\frac{\beta_i^2}{2(1-2\lambda_i)}\Big).
 \]
 Recognizing $\det\nolimits_2(I-2A)=\prod_i e^{2\lambda_i}(1-2\lambda_i)$ and
 $\langle (I-2A)^{-1}h,h\rangle=\sum_i \beta_i^2/(1-2\lambda_i)$ yields the claim for finite-rank $A$. The general case follows by Hilbert–Schmidt approximation of $A$, continuity of the Carleman–Fredholm determinant, and $L^2$-limits.
 \end{proof}

\bibliographystyle{alpha}
\bibliography{processes_martingale}

\end{document}